\documentclass[leqno,11pt]{article}
\usepackage{latexsym,amsmath,amsfonts,mathrsfs,amssymb,amsthm}
\usepackage{yhmath}
\usepackage[usenames]{color}
\usepackage{graphicx}
\usepackage{setspace}
\usepackage{mathtools,bm,booktabs,array,tabularx}
\usepackage{xcolor}
\usepackage{tikz}
\usetikzlibrary{arrows.meta,calc,positioning,matrix,fit}
\usepackage{float}
\usepackage{caption,subcaption,enumitem}
\usepackage[hidelinks,unicode]{hyperref}

\makeatletter
\renewcommand\normalsize{%
   \@setfontsize\normalsize\@xpt\@xiipt
   \abovedisplayskip 3\p@ \@plus2\p@ \@minus5\p@
   \abovedisplayshortskip \z@ \@plus3\p@
   \belowdisplayshortskip 6\p@ \@plus3\p@ \@minus3\p@
   \belowdisplayskip \abovedisplayskip
   \let\@listi\@listI}
\makeatother
\def\r{\mathbb R}
\def\R{\mathbb R}

\def\C{\mathbb C}
\def\Z{\mathbb Z}
\def\s{\sharp}
\def\a{\alpha}

\def\d{\delta}
\def\Y{\mathbb Y}
\def\T{\mathbb T}

\def\be{\begin{equation}}
\def\ee{\end{equation}}
\def\bs{\backslash}
\def\qed{\hfill$\Box$\medskip}
\def\pa{\partial}
\def\nd{\noindent Proof. }
\def\F{{\cal F} }
\def\H{{\mathscr H}}

\numberwithin{equation}{section}
\newtheorem{lem}[equation]{Lemma}
\newtheorem{pro}[equation]{Proposition}
\newtheorem{defn}[equation]{Definition}
\newtheorem{thm}[equation]{Theorem}
\newtheorem{cor}[equation]{Corollary}
\newtheorem{rem}[equation]{Remark}

\allowdisplaybreaks[2]

\newtheorem{mainthm}{Theorem}

\def\FF{{\mathbb Z_2}}
\def\I{\mathbf I}
\def\mass{\mathbf M}
\def\car{\operatorname{car}}
\def\spt{\operatorname{spt}}

\def\Span{\operatorname{span}}
\def\conv{\operatorname{conv}}

\def\op{\mathrm{op}}
\def\Gx{G^{\times}}
\def\eps{\varepsilon}

\def\chain#1{[\![#1]\!]_2}
\def\pair#1#2{\langle #1,#2\rangle}
\def\tr{\operatorname{tr}}

\def\id{\operatorname{id}}
\def\Dcal{\mathcal D}

\definecolor{inkblue}{RGB}{30,75,115}
\definecolor{pale}{RGB}{226,237,246}
\hypersetup{pdftitle={Almgren minimality of products of simplex-skeleton cones},pdfauthor={}}
\begin{document}
\bigskip

\begin{center}
{\Large\textbf{On the Almgren minimality of products of simplex-skeleton cones}}

\medskip
{\large Songsong Huang, Xiangyu Liang, Zaoyi Zhang}
%
%
%
%
%
%
%
\end{center}

\bigskip
\centerline{\large\textbf{Abstract.}}
\medskip

In this paper, we prove that for every $m,n\geq 2$, the orthogonal product $S_m\times S_n\subset\R^m\times\R^n$ is an Almgren minimal cone of dimension $m+n-2$, where $S_k\subset\R^k$ is the cone over the $(k-2)$-skeleton of a regular $k$-simplex. Lawlor and Morgan \cite{LM94} proved the minimality of each  $S_k$ by paired calibrations, while Guy David \cite{DJT, GD13} later raised the general question whether products of Almgren minimal sets remain minimal. Our theorem gives a affirmative answer to this product question within the entire Lawlor-Morgan regular-simplex family. It extends the isolated example \(\Y\times \Y\), the product of two 1-dimensional $\Y$ sets, proved in 2014 by the second author, simultaneously to all pairwise products \(S_m\times S_n\), and therefore produces a large new family of possible codimension-two singularities for Plateau's problem.

The key idea of the proof is to find a new way to combine homology with paired calibrations in higher codimension. Instead of using only the currents associated with the facets, we consider all their nonzero $\FF$ combinations. The resulting complete family has enough algebraic symmetry to control uniformly the overlaps created by noninjective deformations.

\medskip

\textbf{AMS classification.} 28A75, 49Q15, 49Q20

\medskip

\textbf{Key words.} Almgren minimal set; regular simplex-skeleton cone; paired calibration; product; currents modulo two; symmetry identity.

\setcounter{section}{-1}
\section{Introduction}

Plateau's problem, in its simplest form, asks for a surface of least area spanning a given closed curve. Its physical models are soap films. The existence, regularity and local structure of these minimizing objects are one of the central interests in geometric measure theory. 

A soap film need not be a smooth surface: several sheets may meet along a curve or at a point. Thus a formulation intended to describe these objects should allow singularities. 

Several major frameworks have been developed for this problem, such as the theory of integral currents introduced by Federer and Fleming \cite{FeFl,Fe}, the varifold approach developed by Almgren and Allard \cite{Al66,All72}, and the theory of Almgren minimal sets and Reifenberg minimal sets from a set-theoretic point of view \cite{Al76,Rei60}.

In this paper we work with minimal sets in the sense of Almgren \cite{Al76}. Roughly speaking, a relatively closed set $E\subset U\subset\R^N$ is $d$-dimensional Almgren minimal in the open set $U$ if its $d$-dimensional Hausdorff measure cannot be decreased by a Lipschitz deformation compactly supported in $U$. More precisely, we ask that
\[
\H^d(E\bs f(E))\leq\H^d(f(E)\bs E),
\]
for every admissible deformation $f$; see Definition~\ref{def:deformation} for more detail. 

A central advantage of Almgren's formulation is that it treats the competitors as sets, without imposing orientation or algebraic multiplicities. This makes it particularly well suited to modelling soap films. Indeed, the interior singularity types of reduced two-dimensional minimal sets in $\R^3$ coincide exactly with those observed in soap film experiments \cite{Ta}. On the other hand, this lack of algebraic structure also makes minimality harder to prove. Noninjective deformations may fold different parts, while their common image is counted only once. Thus estimates coming from currents, calibrations, or parametrized area, which naturally count multiplicities, do not work directly.

For the regularity, the work of Almgren \cite{Al76} and of David and Semmes \cite{DS00} give rectifiability and local Ahlfors regularity for reduced minimal sets. Thus tangent planes exist almost everywhere, and minimal sets are locally smooth minimal submanifolds around such points. 

Therefore our main interest is in the singular points, where no tangent plane exists. A standard way to study such points is to look at blow-up limits. These limits are Almgren minimal cones \cite{DJT}, so the classification of minimal cones becomes a natural first step in the study of singularities.

For two-dimensional sets in $\R^3$, the reduced minimal cones are, up to isometries, a plane, a $\Y$ set (the union of three half-planes meeting along a line at $120^\circ$), and the tetrahedral cone $\T$ (the cone over the 1-skeleton of a regular simplex centered at the origin), see the figure below. Taylor's theorem gives a local $C^{1,\alpha}$ description by these cones \cite{Ta}.

\centerline{\includegraphics[width=0.12\textwidth]{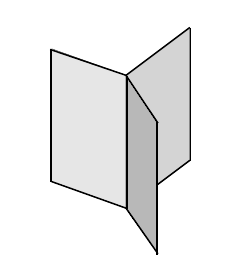} \hskip 2cm\includegraphics[width=0.15\textwidth]{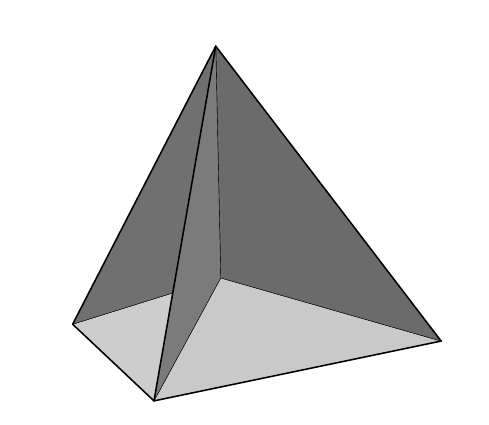}}
\nopagebreak[4]
\hskip 4.3cm a $\Y$ set\hskip 3.9cm  a $\T$ set\\
\nopagebreak[4]

In higher ambient dimension, David obtained local bi-H\"older parametrizations by two-dimensional minimal cones, and $C^{1,\alpha}$ regularity when a tangent cone satisfies the full-length condition \cite{DJT,DEpi}. Thus the cones also determine the available local regularity.

In $\R^4$ (and higher dimensional spaces), the classification is still far from clear. Besides the cones contained in $\R^3$, known examples include unions of two almost orthogonal planes \cite{2p} and $\Y\times\Y$ \cite{YXY}. Here $\Y\subset\R^2$ denotes the union of three rays meeting at $120^\circ$. The plane unions form continuous families of nonisometric cones, while $\Y\times\Y$ gives a different singular configuration. A complete classification remains largely open.

Products provide a natural way to construct new minimal cones. Guy David explicitly raised the general question whether the orthogonal product of two Almgren minimal sets is again minimal \cite[Remark~8.22]{DJT}. The underlying difficulty is that a deformation of $E_1\times E_2$ need not split into deformations of each factor, and different product pieces may overlap, so the separate minimality inequalities do not directly imply minimality of the product. 

This question on product is particularly natural for the regular-simplex cones of Lawlor and Morgan: they proved that, for every $k\ge 2$, the cone $S_k$ over the $(k-2)$-skeleton of a regular $k$-simplex is minimal by paired calibration \cite[Section~1.1]{LM94}. In fact, David further singled out the special branching product $S_2\times S_2=\Y\times \Y$ as a concrete conjectural example (\cite[Section~2]{DEpi} and \cite[Section~4.1]{GD13}); its Almgren minimality was proved by the second author in 2014 \cite{YXY} by combining paired calibrations with a decomposition governed by mod-two homology. Subsequent product theorems cover different classes, for instance a paired-calibrated factor times a codimension-one calibrated manifold \cite{product1}, or a codimension-one set (with singularity) admitting a suitable coflat calibration \cite{product2}. These yield products involving singular area-minimizing hypercones, but do not cover any case for the product of two regular-simplex cones. In fact, before the present work, no general theorem covered products of two branching members of the regular-simplex family beyond the special case $\Y\times \Y$. 

The main difficulty comes from overlaps. The classical paired calibration argument \cite{Br91,LM94}, which was used to prove the minimality for all simplex-skeleton cones,  is a codimension-one argument: the complementary regions are separated, and on each interface one only has to compare the two calibration fields corresponding to the adjacent regions. For a product of two branching sets, we are in codimension 2, and this separation is lost. After a noninjective deformation, several different product faces may be mapped onto the same set, and the sum effects of calibration fields can no longer be estimated directly by the classical paired-calibration argument.

Now let us state our main results. We first state the tetrahedral case, which already exhibits the main new difficulty:

\begin{mainthm}[Minimality of the orthogonal product of two $\T$]\label{main:min}
Let $\T,\T'\subset \R^3$ be regular tetrahedral cones centered at the origin. Then their orthogonal product
\be
E=\T\times\T'\subset \R^3\times \R^3=\R^6
\ee
is a four-dimensional Almgren minimal cone.
\end{mainthm}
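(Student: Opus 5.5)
Since $E$ is a cone, minimality in $\R^6$ reduces to comparing $E\cap B$ with competitors $F=f(E\cap B)$ for admissible deformations $f$ supported in a large ball $B$. I would organise the argument in five steps: a lower bound by $\FF$-homology, a choice of complete family of cycle classes, a paired calibration for each class, an averaging identity, and a summation.

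\textbf{Step 1: lower bound via $\FF$-homology.} The factor $\T$ splits $\R^3\setminus\T$ into four convex regions $R_1,\dots,R_4$, and similarly $\T'$ splits into $R'_1,\dots,R'_4$. For each pair of regions, pick a small circle $\gamma_{ij}$ in $\R^3\setminus\T$ linking the face $\overline R_i\cap\overline R_j$, and a circle $\gamma'_{kl}$ linking the corresponding face of $\T'$. The tori $\gamma_{ij}\times\gamma'_{kl}$ lie in $\R^6\setminus E$. The key point is that, since $f$ is the identity near $\partial B$, these tori are still nonzero in $H_1(\R^6\setminus F;\FF)$, or more precisely they still link $F$.

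I would build $\FF$-currents that realise this linking. As in \cite{YXY}, take $\pair{T}{}$ to be the push-forward by $f$ of the facet currents $\chain{\sigma_{ij}\times\sigma'_{kl}}$, where $\sigma_{ij}$ denotes the face between $R_i$ and $R_j$. Then form sums of these push-forwards that are cycles mod $\partial B$. The important change from \cite{YXY} is that I would not use only the individual facet currents. Instead, for every nonzero $\FF$-combination $c$ of the basic facet classes, I consider the current $T_c=f_\sharp\bigl(\sum_{\text{facets}} c(\text{facet})\chain{\text{facet}}\bigr)$. Each $T_c$ is supported on $F$ and has the same boundary as the corresponding combination for $E$.

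\textbf{Step 2: a paired calibration for each class.} For $\T$ the Lawlor--Morgan paired calibration consists of vectors $v_1,\dots,v_4\in\R^3$ with $|v_i-v_j|=1$, one assigned to each region. Facet $ij$ is calibrated by $(v_i-v_j)\cdot\nu$, which is the flux of the constant field. On the product, the 4-dimensional facet $\sigma_{ij}\times\sigma'_{kl}$ is calibrated by the constant 4-form $\omega=\star_3(v_i-v_j)\wedge\star_3(v'_k-v'_l)$, where $\star_3$ sends a vector to the dual 2-form in the respective $\R^3$. This form is closed and has comass $1$, since it is a product of two unit simple forms in orthogonal factors.

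For each combination $c$, I would write the $\FF$-cycle as the boundary data of a decomposition of $\R^6$ into "chambers". By Stokes, $\mass(T_c)\ge\int_{T_c}\omega_c$, where the right-hand side depends only on boundary data and equals the corresponding quantity for $E$.

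\textbf{Step 3: averaging by symmetry (main obstacle).} The difficulty is that a single class $c$ does not see all of $F$, and different classes double count overlaps. I would sum the inequalities $\mathcal H^4(F\cap \spt T_c)\ge \Phi_c(E)$ over all nonzero $c$. Then I would prove a symmetry identity: for almost every $x\in F$, the number of classes $c$ for which $x$ carries multiplicity $1$ in $T_c$ is at most a fixed constant $N$. On the $E$ side, every point of every facet is counted exactly $N$ times.

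This step rests on the algebraic fact that the multiplicities at $x$ define a linear functional $c\mapsto \mu_c(x)\in\FF$ on the $\FF$-vector space of combinations. Such a functional is either zero or equal to $1$ on exactly half of the nonzero vectors. So at a point $x\in F$, the total count over all $c$ is either $0$ or $2^{r-1}$, where $r$ is the dimension of the space of combinations. This is exactly what controls overlaps uniformly, whatever the folding.

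\textbf{Step 4: summation.} Summing over $c$ gives $2^{r-1}\mathcal H^4(F)\ge\sum_c\mass(T_c)\ge\sum_c\Phi_c=2^{r-1}\mathcal H^4(E\cap B)$. Passing from this to the Almgren inequality $\mathcal H^4(E\setminus F)\le\mathcal H^4(F\setminus E)$ is standard, because $f$ fixes $E$ outside a compact set.

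\textbf{Where I expect the real work.} The main obstacle I expect is Step 2 combined with Step 3. I need to verify that the calibrations attached to $c$ can be chosen coherently, so that the integrand at a point $x\in F$ with tangent plane $P$ is bounded by $\mathcal H^4$-density uniformly across all $c$ with $\mu_c(x)=1$. The comass of the sum of product forms on the chambers must then be checked, probably by a Cauchy--Schwarz bound on the Gram structure of the $v_i-v_j$ in each factor.
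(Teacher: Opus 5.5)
Your Step 1 (push forward the basic product currents by $f$; their boundaries lie where $f$ is the identity) is essentially the paper's reduction to the spanning class. The genuine gap is downstream, in Steps 2--4. The scheme ``one lower bound per class, then count how many classes are active at each point'' cannot close, however the per-class bounds are chosen.

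Your count is correct: for the full space of combinations, a nonzero functional $c\mapsto\mu_c(x)$ equals $1$ on exactly $2^{r-1}$ nonzero $c$. But Step 4 then needs $\sum_c\Phi_c=2^{r-1}\H^4(E\cap B)=\sum_c\mass(T^E_c)$, where $T^E_c$ is the reference current of class $c$. Since $T^E_c$ is itself admissible in its own class, any valid bound satisfies $\Phi_c\le\mass(T^E_c)$. So you would need every $T^E_c$ to be mass minimizing mod two with its boundary, and this is false:
\begin{itemize}
\item In one factor, the cone $D_{e_1}$ over $\partial\chain{F_1}$ has mass $\sqrt2$, while the flat facet $F_1$ spans the same cycle with mass $2/\sqrt3$.
\item In the product, $D_{e_1}\times D'_{e_1}$ is the cone with vertex $(0,0)$ over its boundary. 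Sliding the vertex to $(p,0)$ with $p$ toward $F_1$ lowers the mass to first order, since the first variation reduces to that of $p*\lambda_{e_1}$ in the first factor.
\end{itemize}
Hence $\sum_c\Phi_c<2^{r-1}\H^4(E\cap B)$ strictly, and your chain gives nothing sharp. Minimality of $E$ is a collective property of the family, not a class-by-class one.

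Step 2 has a further problem. A four-dimensional set in $\R^6$ does not bound chambers; this is exactly the codimension-two overlap obstruction. Also, an $\FF$-current cannot be integrated against a form. So ``$\mass(T_c)\ge\int_{T_c}\omega_c$ by Stokes'' is unavailable, and for a non-pure tensor $c$ there is not even a natural candidate calibration.

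The missing idea is to weight the class inequalities and bound a weighted Jacobian sum pointwise, instead of counting active classes. The paper uses only the $49$ pure tensors $g\otimes h$, $g,h\in\Gx$. For these, the orthogonal projection $\Pi_{g,h}=\rho_g\times\rho'_h$ acts as an unsigned calibration: the constancy lemma forces $(\Pi_{g,h})_{\s}\Gamma_{g,h}=\chain{Q_g\times Q'_h}$, hence $A_gA'_h\le\int_{\car\Gamma_{g,h}}J_4(\Pi_{g,h}|_{T_xF})\,d\H^4$. Consistently with the above, this bound is not sharp class by class: $A_gA'_h=4/3<2=\mass(D_g\times D'_h)$ for triangle$\times$triangle.

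Multiplying by $A_gA'_h$ and summing gives $(3\kappa)^2\le\int_F\Phi(C(x),\eta_x)\,d\H^4$. The heart of the proof is the pointwise bound
\[
\Phi(C,\eta)=\sum_{g,h}B_C(g,h)\,|\pair{\eta}{a_g\wedge a'_h}|\le\|U\|_{\op}\,\|\mathsf S\|_{\op}\,\|V\|_{\op}\le\sqrt\kappa\cdot4\cdot\sqrt\kappa=4\kappa ,
\]
which yields $\H^4(F)\ge 9\kappa/4=8$. It uses two facts: the symmetry identity $UU^T=VV^T=\kappa I_3$, and that every nonzero row and column of $B_C$ has exactly four ones. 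For this family the total number of active pairs at a point is not constant: it is $16$, $24$ or $28$ for $C$ of rank one, two or three. What is uniform is the row/column count, and it enters only through the operator norm.

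Your closing remark about a Cauchy--Schwarz bound on the Gram structure points in this direction. As written, though, Steps 3--4 rest on the counting identity, which does not yield the theorem.
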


In fact, similar idea can be extended to the whole regular-simplex family:

\begin{mainthm}[Minimality of the orthogonal product of two regular simplex-skeleton cones]\label{main:simplex}
For each integer $k\geq2$, let $S_k\subset\R^k$ be the cone over the $(k-2)$-skeleton of a regular $k$-simplex centered at the origin. For every $m,n\geq2$, the orthogonal product
\be
S_m\times S_n\subset\R^m\times\R^n
\ee
is an Almgren minimal cone of dimension $m+n-2$.
\end{mainthm}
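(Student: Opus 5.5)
Our plan is to generalize the proof of Theorem~\ref{main:min} (and of $\Y\times\Y$ in \cite{YXY}) to all pairs $(m,n)$: combine the Lawlor--Morgan paired calibrations of each factor with a homological decomposition of a competitor, using all nonzero $\FF$-combinations of facet currents. We first reduce to a local statement. Let $B$ be the unit ball of $\R^{m+n}$ and $f$ a Lipschitz deformation supported in $B$, with $F=f(E)$. By the standard reduction (as in \cite{YXY}), it suffices to show $\H^{m+n-2}(F\cap B)\ge \H^{m+n-2}(E\cap B)$; we may also assume $F$ is rectifiable and that $F\cap\pa B=E\cap\pa B$.

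\textbf{Regions and calibrations.} The complement of $S_m$ in $\R^m$ has $m+1$ convex regions $R_0,\dots,R_m$, one for each vertex $v_i$ of the simplex. Any two regions $R_i,R_j$ meet along a codimension-one face $S_m^{ij}$, whose unit normal is proportional to $v_i-v_j$. Lawlor--Morgan give constant vector fields $w_i=c\,v_i$ with $|w_i-w_j|=1$, and these pair-calibrate $S_m$. The same holds for $S_n$, with regions $R'_0,\dots,R'_n$ and fields $w'_k$. The $(m+n-2)$-faces of $E$ are $S_m^{ij}\times S_n^{kl}$. For each such face, its $\FF$-current is the boundary, mod two, of a suitable product of regions; equivalently, it is dual to the linking with small circles around the codimension-two face. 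For each nonempty index set $A\subset\{0,\dots,m\}$ with $A\ne$ everything, and each nonempty proper $B'\subset\{0,\dots,n\}$, let $U_A=\bigcup_{i\in A}R_i$ and $V_{B'}=\bigcup_{k\in B'}R'_k$. Then $\pa\chain{U_A\times V_{B'}}$ is a combination of faces. Summing over the admissible $(A,B')$ gives the complete symmetric family of classes alluded to in the abstract.

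\textbf{Homological decomposition of $F$.} Because $f$ fixes $\pa B$, each boundary cycle $\gamma_{A,B'}=\pa\chain{U_A\times V_{B'}}\cap\pa B$ bounds, mod two, an integral current supported in $F\cap B$. Using $\FF$ Čech/singular homology of $F$ as in \cite{YXY} (the complement of $F$ in $B$ carries the same linking structure), we obtain a mod-two chain $T_{A,B'}$ supported on $F$ with $\pa T_{A,B'}=\gamma_{A,B'}$. It is determined by a measurable partition of $F$ into pieces $F_{ij,kl}$, where a.e. point is assigned the set of face-labels it carries, and $T_{A,B'}$ is the sum of those pieces whose label pairs are separated by $(A,B')$. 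This is the place where folding is handled: a point of $F$ may carry several labels, but it is counted once in $\H^{m+n-2}(F)$.

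\textbf{Calibration inequality and the main obstacle.} For each face $(ij,kl)$, use the $(m+n-2)$-form $\phi_{ij,kl}=\star\big((w_i-w_j)\wedge(w'_k-w'_l)\big)$, which has comass $1$ and calibrates that face. Pair $T_{A,B'}$ with $\sum \phi$ and apply Stokes, comparing with $E$; the boundary terms agree. Summing over all $(A,B')$ with suitable weights turns the calibration side for $E$ into a constant multiple of $\H^{m+n-2}(E\cap B)$. The main obstacle is the pointwise inequality. At an a.e. point of $F$, with tangent plane $P$, carrying a label set $L$, one must show that the total weighted integrand $\sum_{(A,B')}\big|\sum_{\text{labels separated}}\phi(P)\big|$ is at most the same constant, uniformly in $L$. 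We would attack this using the $\FF$-symmetry. Writing $P$ via its projections, the sum over all $(A,B')$ factorizes into (sum over $A$) $\otimes$ (sum over $B'$). Each factor is the Lawlor--Morgan quantity $\big|\sum_{i\in A}w_i\cdot\nu\big|$ for a unit vector $\nu$, and this is bounded as in the one-factor proof. The needed bound should then follow from Cauchy--Schwarz on the simple $2$-vector normal to $P$, together with the identity $\sum_A (\sum_{i\in A}w_i)(\sum_{i\in A}w_i)^T=c'\,\id$, which holds by the regular-simplex symmetry. Verifying that this symmetry identity yields exactly the sharp constant, for every label configuration, is the step we expect to be hardest.
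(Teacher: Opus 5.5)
You have the right family of currents (interfaces of unions of regions, i.e.\ the paper's $D_g\times D'_h$, $g\in\FF^m\setminus\{0\}$, $h\in\FF^n\setminus\{0\}$) and the right symmetry identity. However, the proposal stops exactly at the step that carries the proof, and the route you sketch for it fails.

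\textbf{Main gap: the pointwise estimate.} After folding, the mod-two multiplicity at $x$ of the current attached to $(A,B')$ is the parity of the number of separated labels. This is $g^TC(x)h$ with $C(x)=\sum_{\text{labels at }x}\delta_{ij}(\delta'_{ab})^T\in M_{m,n}(\FF)$. The matrix $C(x)$ can have rank $\ge 2$: folding $K_{01}\times K'_{01}$ and $K_{02}\times K'_{02}$ onto each other gives $C=e_1{e'_1}^T+e_2{e'_2}^T$. Then the active set $\{(g,h):g^TC(x)h=1\}$ is not a rectangle $I\times J$. So the weighted sum does not split as (sum over $A$)$\,\otimes\,$(sum over $B'$), and the one-factor bound cannot be applied factor by factor. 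Moreover, the normal $2$-vector of $T_xF$ is in general not of the form $(\nu,0)\wedge(0,\nu')$. Your integrand is also not the one that arises: for fixed $(A,B')$ the form is fixed, proportional to $\star(W_A\wedge W'_{B'})$ with $W_A=\sum_{i\in A}w_i$, and the labels at $x$ only switch it on or off through that parity.

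\textbf{The missing idea: an operator-norm bound uniform in $C$.}
\begin{itemize}
\item Because the family is complete, a nonzero $\FF$-linear functional on $\FF^k$ equals $1$ on exactly $2^{k-1}$ elements, all nonzero. Hence every nonzero row (resp.\ column) of the incidence matrix $\mathbf 1_{\{g^TCh=1\}}$ has exactly $2^{n-1}$ (resp.\ $2^{m-1}$) ones.
\item Cauchy--Schwarz row by row then gives $\|\mathsf S\|_{\op}\le 2^{(m+n-2)/2}$ for every sign pattern $\mathsf S$ supported on these entries.
\item Choose signs so that the pointwise sum equals $\omega(\eta)$, with $\omega=\sum_{g,h}\mathsf S_{g,h}\,a_g^\flat\wedge{a'_h}^\flat$. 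Here $a_g$ is the total area vector of the facets selected by $g$, parallel to $W_A$.
\item The comass of such a mixed $2$-form equals the operator norm of its coefficient matrix $\mathsf A\mathsf S\mathsf B^T$; this is what handles non-product $\eta$. Here $\mathsf A,\mathsf B$ have columns $a_g,a'_h$, and $\mathsf A\mathsf A^T=\alpha_mI$, $\mathsf B\mathsf B^T=\alpha_nI$ by your symmetry identity.
\item This yields the sharp bound $2^{(m+n-2)/2}\sqrt{\alpha_m\alpha_n}$.
\end{itemize}
Weighting the per-$(g,h)$ lower bounds (below) by $A_gA'_h=|a_g||a'_h|$ then gives exactly $\H^{m+n-2}(F)\ge mn\alpha_m\alpha_n/(2^{(m+n-2)/2}\sqrt{\alpha_m\alpha_n})=\H^{m+n-2}(E\cap(\Delta^m\times\Delta^n))$.

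\textbf{Other steps that fail as written.}
\begin{itemize}
\item \emph{Stokes with mod-two chains.} A mod-two chain cannot be paired with a differential form, so ``pair $T_{A,B'}$ with $\sum\phi$ and apply Stokes'' is not available. With integer coefficients, on the other hand, folded sheets carry multiplicities that $\H^{m+n-2}(F)$ does not see. The paper's unoriented substitute is projection. Let $\Pi_{g,h}$ be the orthogonal projection onto $a_g^\perp\times{a'_h}^\perp$. The constancy theorem gives $(\Pi_{g,h})_\sharp\Gamma_{g,h}=\chain{Q_g\times Q'_h}$ with $\H^{m+n-2}(Q_g\times Q'_h)=A_gA'_h$. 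Hence $A_gA'_h\le\int_{\car\Gamma_{g,h}}J_{m+n-2}(\Pi_{g,h}|_{T_xF})\,d\H^{m+n-2}$, and $A_gA'_h\,J_{m+n-2}=|\pair{\eta}{a_g\wedge a'_h}|$.
\item \emph{Dimension slip.} $\partial\chain{U_A\times V_{B'}}$ has dimension $m+n-1$. The current you want is $\partial\chain{U_A}\times\partial\chain{V_{B'}}$ cut off to $\Delta^m\times\Delta^n$. Since $A$ and its complement give the same current, there are $(2^m-1)(2^n-1)$ of them.
\item \emph{No \v{C}ech homology needed.} The push-forwards $f_\sharp(D_{e_r}\times D'_{e_s})$ already lie on $F$ with the original boundaries, and $\FF$-linearity produces all the other currents.
\end{itemize}
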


Theorem \ref{main:simplex} solves, in a single argument, all pairwise products in the Lawlor-Morgan regular-simplex family. The result for $\Y\times\Y=S_2\times S_2$ established in 2014 \cite{YXY} is thus extended at once to every pair $S_m\times S_n$, $m,n\ge 2$, including $\T\times \Y$ and $\T\times \T$. We therefore obtain a complete affirmative answer to David's product question within this
family. In particular, it produces a systematic family of new codimension-two singularity models.

The key idea of the proof is to enlarge the family of currents used in the paired calibration argument. Instead of keeping only the basic currents associated with the facets, we consider all their nonzero mod-two combinations. For a $k$-simplex this gives $2^k-1$ test currents.

At first glance, most of these additional currents have no direct geometric interpretation as individual faces of the simplex. Their role becomes visible only at the algebraic level: after a noninjective deformation, different faces may overlap, and the multiplicities of the basic currents may add modulo two. The original facet family does not give a uniform control of all such overlap patterns. The complete family does: all nonzero $\FF$ linear
functionals have the same number of elements that take the value one. Combined with the symmetry of the regular simplex, this gives the quadratic symmetry identity and the pointwise estimate needed in the proof for arbitrary overlaps.

Extending this method to more factors or to general paired-calibrated sets is a natural further question; the additional-factor case is briefly discussed in Subsection~\ref{subsec:simplex-scope}.

The rest of the paper is organized as follows. In Section~\ref{sec:tetra} we recall the necessary definitions, the classical paired calibration for the tetrahedral cone $\T$, and explain the overlap obstruction for products. In Section~\ref{sec:full-cycle} we introduce the complete $\FF$ cycle family for $\T$, the associated projection and area vectors, the pointwise estimate,and use then to give a new proof of the minimality of $\T$. Section~\ref{sec:estimate} is devoted to the product structure: we introduce the local multiplicity matrix and prove the sharp pointwise estimate. In Section~\ref{sec:minimality} we prove the Almgren minimality of the tegrahedral product (Theorem~\ref{main:min}). Finally in Section~\ref{sec:simplex-products} we extend the argument to arbitrary regular simplex-skeleton cones(Theorem~\ref{main:simplex}).

\noindent\textbf{Acknowledgement:} The second author would like to thank Giovanni Alberti, Guy David, Thierry De Pauw for useful discussion. This work is supported by the General Program of the National Natural Science Foundation of China (Grant No.~12671108 and Grant No.~12271018), the Foundation for Innovative Research Groups of the Natural Science Foundation of China (No. 12121001), and Beijing Natural Science Foundation Youth Science Fund Key Project (Grant No.~JQ26002). 

\noindent\textbf{Statement on the use of AI:} The authors used OpenAI's GPT-6 Astra in developping this work. The two key estimates---the four-point support property and the initial
pointwise estimate for $\T\times\T$---were formulated by
GPT-6 Astra in a more algebraic and substantially shorter form, using the
symmetry identity (Lemma~\ref{lem:frame}) and the operator-norm estimate
(Lemma~\ref{lem:signnorm}), in place of the authors' original direct
calculations. This formulation also led naturally to the higher-dimensional versions, in particular Lemmas~\ref{lem:sp-frame} and~\ref{lem:sp-incidence} and Theorem~\ref{thm:sp-pointwise}. The geometric argument was done by the authors. The authors take full responsibility for the corectness of this work.

\section{Preliminaries}\label{sec:tetra}

In this section, we first introduce necessary notations and definitions in Subsections \ref{subsec:notation}--\ref{subsec:currents}. We then use the set $\T$ to illustrate the paired calibrations argument in Subsection \ref{subsec:classical-paired}, and explain briefly why the product of paired calibrations cannot be used directly to prove the minimality for the product of two $\T$'s.

\subsection{Notation and geometric conventions}\label{subsec:notation}

-For points $a,b\in\R^N$, write $[a,b]$ for the closed segment, $\R_+a=\{ta:t\geq0\}$ for the ray generated by $a$;

-$B(x,r)$ and $\overline B(x,r)$ denote the open and closed Euclidean balls respectively;

-The symbols $\conv$, $\Span$, denote convex hull and linear span respectively;

-The notation $A\Subset U$ means that $\overline A$ is compact and contained in the open set $U$.

-For an integer $d\le N$, and a $d$-rectifiable set $M$ of locally finite $\H^d$ measure, $T_xM$ denotes its approximate tangent plane at the point $x$, defined at almost every point. 

-A relatively closed set $M\subset U$ is $d$-reduced if
$$
 M=\{x\in M:\H^d(M\cap B(x,r))>0
       \text{ whenever }r>0\text{ and }B(x,r)\Subset U\}.
$$

-The notation $\H^d\llcorner M$ denotes the restriction of $\H^d$ to $M$.

-The space $\Lambda_j(\R^N)$ has the Euclidean inner product induced by that of $\R^N$. A simple $j$-vector is an exterior product of $j$ vectors; it is unit if its Euclidean norm is one. It represents an oriented $j$-plane, and changing its sign reverses orientation. We write $G(N,j)$ for the unoriented Grassmannian and $*$ for the Hodge star after an ambient orientation has been fixed. 

-For a constant $j$-form $\omega$, its comass is 
$$
 \|\omega\|_*=
 \sup\{|\omega(\xi)|:\xi\text{ is a unit simple }j\text{-vector}\}
$$

-The notation $v^\flat$ denotes the covector associated with a vector $v$. 

-$J_j(p|_L)$ denotes the $j$-dimensional Jacobian of a linear map $p$ restricted to a $j$-plane $L$; the same notation is used for the approximate tangential differential of a Lipschitz map. 

-For a real matrix, $\|\cdot\|_{\op}$ is its Euclidean operator norm. $\FF$ ($\FF=\Z/2\Z$) matrix operations will always be explicitly distinguished from real matrix operations.

-Homology groups and cycle spaces will always have coefficients in $\FF$ unless otherwise stated.  $\FF$ basis vectors are denoted by $e_i$ and Euclidean coordinate vectors by $\mathbf e_i$ when both occur in the same discussion. 

\subsection{Minimal sets and classes of competitors}\label{subsec:minimal-prelim}
\begin{defn}[Minimality with respect to a competitor class]\label{def:relative-minimality}
Let $0<d<n$, let $U\subset\R^n$ be open, and let $M\subset U$ be relatively closed. Let $\F$ be a class of relatively closed subsets of $U$. We say that $M$ is $d$-dimensional minimal in $U$ with respect to $\F$ if $\H^d(M\cap B)<\infty$ for every compact ball $B\subset U$ and
\be
 \H^d(M\bs F)\leq\H^d(F\bs M)
 \qquad\text{for every }F\in\F.
\ee
The sets in $\F$ are called competitors.
\end{defn}
If $\F_1\subset\F_2$, minimality with respect to $\F_2$ implies minimality with respect to $\F_1$. This fact will be used in the proof of Almgren minimality below: we first minimize in a spanning competitor class $\F_2$ and then verify that the class $\F_1$ of Almgren deformation is included in $\F_2$. See Sections \ref{sec:minimality} and \ref{subsec:simplex-boundary}-\ref{subsec:simplex-almgren}. 

\begin{defn}[Almgren deformations, competitors, and minimizers]\label{def:deformation}
Let $U\subset\R^n$ be open. 

$1^\circ$ A family $\varphi_t:U\to U$, $0\leq t\leq1$, is an Almgren deformation in $U$ if $(t,x)\mapsto\varphi_t(x)$ is continuous, $\varphi_0=\id$, $\varphi_1$ is Lipschitz, and
\be\label{eq:track}
\widehat W=\bigcup_{0\leq t\leq1}\left(W_t\cup\varphi_t(W_t)\right)\Subset U\mbox{, where }
 W_t=\{x\in U:\varphi_t(x)\ne x\}.
\ee

$2^\circ$ For every relatively closed set $M\subset U$, an Almgren competitor of $M$ is a set $F=\varphi_1(M)$ for some Almgren deformation $\varphi_t, 0\leq t\leq1$ in $U$. 

$3^\circ$ A relatively closed set $M\subset U$ with locally finite $\H^d$-measure is called $d$-dimensional Almgren minimal in $U$ if it is $d$-dimensional minimal in $U$ with respect to the class of all its Almgren competitors.
\end{defn}

\begin{lem}\label{lem:ball-testing}
Suppose $U$ is either $\R^N$ or an open ball. A relatively closed set $M\subset U$ of locally finite $\H^d$-measure is Almgren minimal if and only if, for every closed ball $B\Subset U$ and every Lipschitz map $f:U\to U$ satisfying
\be
 f(B)\subset B,\qquad f=\id\quad\text{on }U\bs B,
\ee
one has
\be\label{eq:ball-comparison}
 \H^d(M\cap B)\leq\H^d(f(M)\cap B)
       =\H^d(f(M\cap B)).
\ee
\end{lem}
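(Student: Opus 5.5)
The plan is to prove the two implications separately. In both directions we use the same elementary set identity: if $f=\id$ on $U\bs B$ and $f(B)\subset B$, then $f(M)\bs B=M\bs B$ and $f(M)\cap B=f(M\cap B)$. This already gives the equality in \eqref{eq:ball-comparison}.

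\emph{Almgren minimality implies the ball inequality.} Let $B\Subset U$ be a closed ball and $f$ a Lipschitz map as in the statement. Put $\varphi_t=(1-t)\id+tf$. Then $(t,x)\mapsto\varphi_t(x)$ is continuous, $\varphi_0=\id$, and $\varphi_1=f$ is Lipschitz. Since $B$ is convex and $f(B)\subset B$, we have $\varphi_t(B)\subset B$, and $\varphi_t=\id$ on $U\bs B$. Hence $W_t\subset B$ and $\varphi_t(W_t)\subset B$, so $\widehat W\subset B\Subset U$ and $\varphi_t$ is an Almgren deformation. Almgren minimality gives $\H^d(M\bs f(M))\le\H^d(f(M)\bs M)$. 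By the set identity, both $M\bs f(M)$ and $f(M)\bs M$ lie in $B$. Measurability is not an issue: $M\cap B$ is compact, because it is relatively closed in $U$ and contained in the compact set $B\subset U$, so $f(M\cap B)$ is compact as well. Adding $\H^d(M\cap f(M)\cap B)$ to both sides of the inequality gives $\H^d(M\cap B)\le\H^d(f(M)\cap B)$. All quantities are finite: $\H^d(M\cap B)<\infty$ by local finiteness, and $\H^d(f(M\cap B))\le \mathrm{Lip}(f)^d\,\H^d(M\cap B)<\infty$.

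\emph{The ball inequality implies Almgren minimality.} Let $\varphi_t$ be an Almgren deformation and set $f=\varphi_1$. The only step using the hypothesis on $U$ is finding a suitable ball. Because $\overline{\widehat W}$ is a compact subset of $U$, there is a closed ball $B\Subset U$ containing $\widehat W$. If $U=\R^N$, any large ball works. If $U=B(x_0,R)$, the compact set $\overline{\widehat W}$ lies in $\overline B(x_0,r)$ for some $r<R$, and we take $B=\overline B(x_0,r)$.

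We then check that $f$ satisfies the hypotheses of the statement. First, $f=\id$ outside $W_1\subset B$. Second, $f(B)\subset B$: if $x\in B$ and $f(x)\ne x$, then $x\in W_1$, so $f(x)\in\varphi_1(W_1)\subset\widehat W\subset B$. Applying \eqref{eq:ball-comparison} and subtracting the finite quantity $\H^d(M\cap f(M)\cap B)$ from both sides, the set identity yields
\[
\H^d(M\bs f(M))=\H^d(M\cap B)-\H^d(M\cap f(M)\cap B)\le\H^d(f(M)\cap B)-\H^d(M\cap f(M)\cap B)=\H^d(f(M)\bs M).
\]
This is exactly Almgren minimality. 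No step is substantial; the only points needing care are the choice of $B$ inside $U$ (this is where $U$ being $\R^N$ or a ball is used) and the finiteness that justifies the subtractions.
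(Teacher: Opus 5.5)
Your proof is correct and follows the same route as the paper. For the forward direction you use the straight-line homotopy $(1-t)\id+tf$, and for the converse you take a closed ball containing the compact moving track $\widehat W$. Beyond the paper's proof, you spell out the set identities $f(M)\cap B=f(M\cap B)$ and $f(M)\bs B=M\bs B$, together with the compactness and finiteness that justify adding or subtracting the common term $\H^d(M\cap f(M)\cap B)$; the paper leaves these implicit.
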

\nd

$\Rightarrow$:
For such a $f$, the straight line homotopy $\varphi_t(x)=(1-t)x+tf(x)$ is an Almgren deformation in $U$ with $f=\varphi_1$. Hence the Almgren minimality of $M$ implies immediately \eqref{eq:ball-comparison}.

$\Leftarrow$: for any Almgren deformation $\varphi_t,0\le t\le 1$, just take $\varphi_1=f$. The compact moving track condition \eqref{eq:track} guarantees the existence of a ball $B$ for $f$, and \eqref{eq:ball-comparison} implies the Almgren minimality.
\qed
%

\subsection{Currents modulo two and elementary facts}\label{subsec:currents}
Unless otherwise specified, the coefficient group is always $\FF$, with its nonzero element assigned norm one. We write $\I_k(\R^n;\FF)$ for the space of $k$-dimensional integral currents modulo two in the sense of Fleming
\cite{Fl66}. Unless otherwise stated, all the currents considered below have compact support.

A $k$-rectifiable current modulo two has a representation
\be
A=\chain{M,\theta},\qquad \theta:M\to\{0,1\},
\ee
where $M$ is $k$-rectifiable. Its essential carrier is
$\car A=\{x\in M:\theta(x)=1\}$, well defined up to $\H^k$-null sets, and
$\mass(A)=\H^k(\car A)$. The topological support $\spt A$ is the closed support of its mass measure. All pointwise multiplicity statements will use $\car A$, not $\spt A$.

Products and the cone operator satisfy
\be\label{eq:chainrules}
\begin{aligned}
\partial(A\times B)&=(\partial A)\times B+A\times(\partial B),\\
\partial(p*A)&=A+p*(\partial A)\qquad (\dim A\geq1),
\end{aligned}
\ee
where in the last identity $p*A$ means the join of the $k$-current $A$ with the 0-current $\d_p$. It is a "cone" with vertex $p$. We also have
\be
(f\times g)_{\s}(A\times B)=(f_{\s}A)\times(g_{\s}B),\qquad
f_{\s}(A+B)=f_{\s}A+f_{\s}B.
\ee

\begin{lem}\label{lem:common}
Let $M\subset\R^n$ be $k$-rectifiable with locally finite $\H^k$ measure. Suppose that $A_1,\ldots,A_N\in\I_k(\R^n;\FF)$ satisfy $\car A_j\subset M$ up to null sets. Then there are measurable functions $\theta_j:M\to\FF$ with $A_j=\chain{M,\theta_j}$. Outside one common $\H^k$-null set, each carrier present at $x$ has approximate tangent plane $T_xM$, and
\be
\theta_{\sum_j\a_jA_j}(x)=\sum_j\a_j\theta_j(x)\quad\text{in }\FF.
\ee
\end{lem}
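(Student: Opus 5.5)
The plan is to reduce Lemma~\ref{lem:common} to the structure theory of rectifiable currents modulo two due to Fleming \cite{Fl66}, together with the almost-everywhere uniqueness of approximate tangent planes for rectifiable sets.

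\emph{Step 1: multiplicity functions.} For each $j$, write $A_j=\chain{M_j,\theta'_j}$ with $M_j$ a $k$-rectifiable set. Then set $\theta_j(x)=\theta'_j(x)$ for $x\in M\cap M_j$, and $\theta_j=0$ elsewhere on $M$. Since $\car A_j\subset M$ up to an $\H^k$-null set, the pieces we discard are null, so $A_j=\chain{M,\theta_j}$ as currents. Measurability of $\theta_j$ follows from measurability of $\theta'_j$ and of $M\cap M_j$, since rectifiable sets are $\H^k$-measurable.

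\emph{Step 2: tangent planes.} A rectifiable current mod two is determined by its mass measure $\H^k\llcorner\car A$ together with the approximate tangent plane field (unoriented). Indeed, mod two currents carry no orientation, so the current $\chain{M,\theta}$ is the same whichever $k$-rectifiable carrier $M'\supset\{\theta=1\}$ is used. The point to check is that the tangent plane of $\car A_j$ agrees with $T_xM$. Since $\car A_j\subset M$ and both sets are rectifiable, the standard locality of approximate tangent planes gives $T_x(\car A_j)=T_xM$ for $\H^k$-a.e.\ $x\in\car A_j$. This rests on density: at density points of $\car A_j$ relative to $M$, the blow-ups of $\H^k\llcorner\car A_j$ and of $\H^k\llcorner M$ coincide. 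We collect the exceptional null sets for $j=1,\dots,N$, together with the null set where $T_xM$ fails to exist, into one null set $Z$; this is a finite union, so $Z$ is null.

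\emph{Step 3: linearity.} Addition of rectifiable mod two currents supported on a common rectifiable set is pointwise addition of the multiplicities in $\FF$. In Fleming's framework, $\chain{M,\theta}+\chain{M,\theta'}=\chain{M,\theta+\theta'\bmod 2}$. One way to see this is to test on the flat-chain representation, or on the slices/approximations by polyhedral chains, where the identity is clear cellwise, and then pass to the limit. Alternatively, since $\car$ of a mod two rectifiable current is determined up to null sets, it suffices to check that the current $\chain{M,\sum\a_j\theta_j}$ equals $\sum\a_jA_j$. We check this by localizing to disjoint measurable pieces of $M$ on which every $\theta_j$ is constant, which reduces to the trivial relations $[S]+[S]=0$ and $[S]+0=[S]$. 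Enlarging $Z$ by the null set where the representation of $\sum_j\a_jA_j$ differs then gives the identity $\theta_{\sum_j\a_jA_j}(x)=\sum_j\a_j\theta_j(x)$ off $Z$.

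The main obstacle is Step 3. Currents modulo two are defined as equivalence classes of integral currents modulo $2$, or as flat chains in Fleming's sense, and are not literally measures with sign. The cleanest route is to quote Fleming's result that the rectifiable mod-two chains of dimension $k$ carried by $M$ are isometrically isomorphic to $L^1(\H^k\llcorner M;\FF)$. That isomorphism is additive, and it yields both Step 1 and Step 3 directly.
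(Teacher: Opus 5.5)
Your proposal is correct and follows the paper's argument: you extend each multiplicity by zero from its own carrier to all of $M$, and you use the a.e.\ agreement of approximate tangent planes of same-dimensional rectifiable sets on their intersection, discarding finitely many null sets. Your Step~3 only makes explicit the pointwise additivity of $\FF$ multiplicities, which the paper takes for granted; your cited identification of mod-two rectifiable chains carried by $M$ with $\FF$-valued functions in $L^1(\H^k\llcorner M;\FF)$ is a fine way to justify it.
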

\nd
The rectifiable representation gives the $\FF$ multiplicity on each carrier; we extend it to the whole $M$ by zero on the rest of $M$. Two rectifiable sets of the same dimension have the same approximate tangent plane at almost every point of their intersection.
\qed

\begin{lem}\label{lem:pushmass}
Let $A\in\I_k(\R^n;\FF)$ have compact support, and let $p:\R^n\to\R^m$ be Lipschitz. Then
\be\label{eq:masspush}
\mass(p_{\s}A)\leq\int_{\car A}J_k(p|_{T_x\car A})\,d\H^k(x).
\ee
If $\car A\subset M$, one may replace $T_x\car A$ by $T_xM$ in the integrand.
\end{lem}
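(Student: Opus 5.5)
We treat \eqref{eq:masspush} as a mod-two version of the area formula combined with the elementary mass bound for pushforwards. Write $A=\chain{M_A,1}$ with $M_A=\car A$, a $k$-rectifiable set of finite $\H^k$ measure, oriented only modulo two. The pushforward of a rectifiable mod-two current by a Lipschitz map is computed by the area formula: $p_{\s}A$ is represented by the set $p(M_A)$, and the multiplicity at $y$ is the mod-two count of $\{x\in M_A\cap p^{-1}(y): J_k(p|_{T_xM_A})>0\}$. This holds for $\H^k$-almost every $y$, and the count is finite for a.e.\ $y$ by the area formula. Since the coefficient group is $\FF$, there is no sign cancellation issue to track. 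The only fact needed is that a sum of $N$ ones in $\FF$ has norm at most $1\le N$.

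The key steps are as follows.

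1. Approximate $M_A$, up to an $\H^k$-null set, by countably many disjoint pieces $M_i$. Each $M_i$ is contained in a $C^1$ embedded $k$-submanifold, and on each $M_i$ the map $p$ is either injective with $J_k(p|_{T_xM_A})>0$, or satisfies $J_k=0$. This is Federer's decomposition used in the proof of the area formula; it relies on Rademacher's theorem and the tangential differential defined $\H^k$-a.e.

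2. On the pieces where $J_k=0$, the image has $\H^k$ measure zero, again by the area formula, so these pieces contribute nothing to $p_{\s}A$.

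3. On each injective piece, $p_{\s}\chain{M_i,1}=\chain{p(M_i),1}$. Its mass is
\[
\H^k(p(M_i))=\int_{M_i}J_k(p|_{T_xM_A})\,d\H^k .
\]

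4. Sum the pieces. Mass is subadditive, and $p_{\s}$ commutes with countable sums convergent in mass; convergence holds because $\sum\mass(p_{\s}\chain{M_i,1})\le \mathrm{Lip}(p)^k\H^k(M_A)<\infty$. Hence
\[
\mass(p_{\s}A)\le\sum_i\int_{M_i}J_k\,d\H^k=\int_{\car A}J_k(p|_{T_x\car A})\,d\H^k .
\]

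The main technical point is step 4. One must justify that the pushforward defined on flat chains modulo two agrees with the rectifiable representative obtained piecewise. This uses that mod-two rectifiable currents are mass-limits of finite sums of such pieces, and that $p_{\s}$ is continuous in mass with $\mass(p_{\s}T)\le\mathrm{Lip}(p)^k\mass(T)$ (Fleming \cite{Fl66}). Compact support ensures $p_{\s}A$ is well defined even if $p$ is only Lipschitz on a neighbourhood of $\spt A$.

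For the final assertion, suppose $\car A\subset M$ with $M$ $k$-rectifiable. By the fact already used in Lemma~\ref{lem:common}, two $k$-rectifiable sets have the same approximate tangent plane at $\H^k$-almost every point of their intersection. So $T_x\car A=T_xM$ for $\H^k$-a.e.\ $x\in\car A$, the two integrands agree a.e., and the integrals coincide.
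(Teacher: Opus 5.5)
Your proof is correct and is essentially the paper's argument. The paper simply writes $\mass(p_{\s}A)\le\H^k(p(\car A))\le\int\#\bigl(p^{-1}\{y\}\cap\car A\bigr)\,d\H^k(y)=\int_{\car A}J_k(p|_{T_x\car A})\,d\H^k(x)$, which is your observation that the mod-two image multiplicity is at most the number of preimages, combined with the counting form of the area formula. Your decomposition into injective pieces plus subadditivity of mass only unpacks that area formula, and your remark that the flat-chain pushforward agrees with the piecewise rectifiable representative makes explicit a point the paper's one-line proof takes for granted.
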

\nd

We apply the area formula, and get
\be \mass(p_{\s}A)\le \H^k(p(\car A))\le \int_{p(\car A)}\s p^{-1}\{y\}d\H^k(y)=\int_{\car A} J_k(p|_{T_x\car A})d\H^k(x).\ee\qed

\begin{lem}[Constancy]\label{lem:topcycle}
Let $P$ be an affine $k$-plane, $k\geq1$. If $A\in\I_k(P;\FF)$ is compactly supported and $\partial A=0$, then $A=0$.
\end{lem}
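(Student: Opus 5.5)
We aim to show that a compactly supported $k$-dimensional $\FF$-cycle lying in an affine $k$-plane $P$ has vanishing multiplicity almost everywhere; this is the mod-two constancy theorem with the boundary condition at infinity supplied by compact support. After a translation and a rotation we identify $P$ with $\R^k$. By the definition of $\I_k(P;\FF)$ we can write $A=\chain{P,\theta}$ with $\theta:P\to\FF$ measurable and compactly supported. So everything reduces to showing $\theta=0$ $\H^k$-a.e.

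For the core step we use the standard constancy argument, adapted to $\FF$ coefficients. The condition $\partial A=0$ means that for every $(k-1)$-dimensional test object $\phi$, the pairing of $\partial A$ with $\phi$ vanishes. In top dimension on $\R^k$, this says exactly that the distributional partial derivatives of $\theta$, regarded as an $L^1$ function, vanish. The caveat is that mod-two currents are defined as equivalence classes modulo $2$, so we cannot simply pass to real coefficients. We therefore proceed in three steps.

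\textbf{Step 1: lift to an integral current.} Fleming's theory \cite{Fl66} lets us write $A$ as the mod-two class of an integral current $T=\chain{P,\tilde\theta}$ with $\tilde\theta\in\{0,1\}$. Then $\partial T$ is an integral $(k-1)$-current in $P$ that is divisible by $2$: $\partial T=2R+\partial(2S)$ up to the flat mod-two equivalence, with $R$, $S$ integral.

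\textbf{Step 2: compare multiplicities via the real constancy theorem.} We apply the classical (real) constancy theorem to $T-2S'$, where $S'$ is the $k$-dimensional part of $S$, projected to $P$ by the nearest-point retraction. Since $\partial(T-2S')=2R'$, we can compare multiplicities on regions of $P$ away from $\spt R'$. This step is the main obstacle. To get around it, we first try the more elementary route described next, which avoids lifting altogether.

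\textbf{Elementary route via slicing.} Slice $A$ by lines parallel to a coordinate axis. For almost every line $\ell$, the slice $\langle A,\ell\rangle$ is a $1$-dimensional mod-two cycle on the line, with multiplicity $\theta|_\ell$. Slicing commutes with $\partial$ up to sign, and signs are irrelevant mod two. A compactly supported $1$-cycle on $\R$ with $\FF$ coefficients is a finite sum of intervals whose boundary points cancel in pairs. Hence its multiplicity is locally constant off a finite set. Since $\theta|_\ell$ is compactly supported and of bounded variation in $\FF$ (jumps only at boundary points, of which there are none), it vanishes a.e. on $\ell$.

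\textbf{Step 3: conclude by Fubini.} Applying Fubini over the almost every admissible line, we get $\theta=0$ $\H^k$-a.e. on $P$, so $\mass(A)=0$ and $A=0$. The delicate point in this route is the justification that slices of mod-two integral currents have boundary equal to the slice of the boundary. We would quote this from \cite{Fl66} or Federer \cite[4.2.26]{Fe}.
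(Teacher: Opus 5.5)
\textbf{There is a genuine gap: the proposal has no complete argument.} Steps 1--2 are explicitly abandoned (``this step is the main obstacle''). The slicing route then only reduces the lemma in dimension $k$ to the lemma in dimension $1$. After slicing, you assert that a compactly supported mod-two $1$-cycle on a line ``is a finite sum of intervals whose boundary points cancel in pairs''. From this you get that $\theta|_\ell$ ``jumps only at boundary points''. That assertion \emph{is} the case $k=1$ of the statement you are proving. Your justification also presupposes that the mod-two boundary of $\chain{\ell,\theta|_\ell}$ can be read off from the jumps of $\theta|_\ell$. You yourself warned that this identification cannot be obtained by passing naively to real coefficients, since a mod-two current is an equivalence class. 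The claim is true, but proving it needs some input. For example, take an integral representative $\chain{\ell,\tau}$ with $\tau$ an integer-valued function of bounded variation; its jumps must all be even because its boundary vanishes modulo two. But that is precisely the lifting you set out to avoid. So your slicing and Fubini steps rest on an unproved base case, even though they are fine as far as they go (modulo quoting the mod-two slicing theory).

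\textbf{The missing idea is the cone construction used in the paper,} which makes both lifting and slicing unnecessary. Fix $p\in P$ and consider $p*A$. Since $P$ is affine and contains both $p$ and $\spt A$, the cone $p*A$ is a $(k+1)$-dimensional integral current modulo two carried by the $k$-plane $P$. As $\H^{k+1}(P)=0$, its mass vanishes, so $p*A=0$. The cone formula $\partial(p*A)=A+p*(\partial A)$ holds because $k\geq1$. It gives $A=\partial(p*A)+p*(\partial A)=0$, with no sign issues modulo two. This argument works verbatim in every dimension $k\geq1$. In particular it would also settle your one-dimensional base case, at which point the slicing reduction becomes superfluous.
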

\nd
Choose $p\in P$ and look at the cone $p*A$. It is an integral $(k+1)$-current supported in the $k$ plane $P$, hence $p*A=0$. By \eqref{eq:chainrules}, $A=\partial(p*A)+p*(\partial A)=0$.\qed

\subsection{The tetrahedral cone and the classical paired calibrations}\label{subsec:classical-paired}

In $\R^3$, take
\be\label{eq:vertices}
\begin{aligned}
v_0&=\frac1{\sqrt3}(1,1,1),&v_1&=\frac1{\sqrt3}(1,-1,-1),\\
v_2&=\frac1{\sqrt3}(-1,1,-1),&v_3&=\frac1{\sqrt3}(-1,-1,1).
\end{aligned}
\ee
Let $\Delta=\conv\{v_0,v_1,v_2,v_3\}$ be the convex hull, and let $F_i$ be its facet opposite $v_i$. Let $K=\cup_{i<j}[v_i,v_j]$ be the 1-skeleton of the tetrahedron $\Delta$, and let $\T$ be the cone over $K$ centered at $0$. Set
\be\label{eq:T}
\Delta_{ij}=\conv\{0,v_i,v_j\},\qquad S=\T\cap\Delta=\bigcup_{i<j}\Delta_{ij}.
\ee
 Thus $S$ consists of these six truncated two-dimensional sectors $\Delta_{ij}, i<j$. Three sectors meet along each of the four rays $\R_+v_i$. Simple calculation gives

\begin{lem}\label{lem:geometry}
For $i\ne j$, $\sum_i v_i=0$ and $v_i\cdot v_j=-1/3$. For each $i$, The three sectors $\Delta_{ij}, j\ne i$ intersect at the ray $\R_+v_i$ at $120^\circ$. Moreover,
\be\label{eq:areas}
\H^2(\Delta_{ij})=\frac{\sqrt2}{3},\quad
\H^2(S)=2\sqrt2,\quad
q:=\H^2(F_i)=\frac2{\sqrt3},\quad w_i=-v_i,
\ee
where $w_i$ is the outward unit normal of $F_i$.
\end{lem}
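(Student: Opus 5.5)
The statement is a direct computation from the explicit coordinates \eqref{eq:vertices}, and I will verify its claims in the order listed.

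First, summing the four vectors coordinatewise gives $(1+1-1-1)/\sqrt3=0$ in each slot, so $\sum_i v_i=0$. Each $v_i$ has norm one. For $i\ne j$ the two sign patterns agree in exactly one coordinate and differ in two, so $v_i\cdot v_j=\frac13(1-1-1)=-\frac13$. The same value also follows from symmetry together with $0=|\sum v_i|^2=4+12\,v_i\cdot v_j$.

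Next, the $120^\circ$ claim. Fix $i$ and project the three vectors $v_j$, $j\ne i$, onto $v_i^\perp$: set $u_j=v_j-(v_j\cdot v_i)v_i=v_j+\frac13 v_i$. Then
\[
|u_j|^2=1-\tfrac19=\tfrac89,\qquad u_j\cdot u_k=-\tfrac13+\tfrac19=-\tfrac29 \quad (j\ne k),
\]
so $\cos\angle(u_j,u_k)=-\frac12$. The sector $\Delta_{ij}$ near the ray $\R_+v_i$ is spanned by $v_i$ and $u_j$, so the three half-planes meet along $\R_+v_i$ at pairwise angles of $120^\circ$. Since $\sum_{j\ne i}u_j=-v_i+v_i=0$, the three directions are coplanar in $v_i^\perp$ and balanced.

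For the areas, $\H^2(\Delta_{ij})=\frac12|v_i||v_j|\sin\theta$, where $\cos\theta=-\frac13$ and hence $\sin\theta=\frac{2\sqrt2}{3}$. This gives $\frac{\sqrt2}{3}$. Six sectors meet only along segments, which are $\H^2$-null, so $\H^2(S)=2\sqrt2$. Each facet $F_i$ is an equilateral triangle with side length $|v_j-v_k|=\sqrt{2-2(-\frac13)}=\frac{2\sqrt2}{\sqrt3}$. Its area is therefore $\frac{\sqrt3}{4}\cdot\frac83=\frac{2}{\sqrt3}$.

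Finally, the normal. For $j,k\ne i$ we have $(v_j-v_k)\cdot v_i=0$, so $v_i$ is normal to $F_i$. The plane of $F_i$ is $\{x:x\cdot v_i=-\frac13\}$, because $v_j\cdot v_i=-\frac13$ for $j\ne i$. The vertex $v_i$ itself satisfies $v_i\cdot v_i=1>-\frac13$, so the outward direction is $-v_i$, giving $w_i=-v_i$.

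As a consistency check, the volume computed as $\frac13\sum_i q\cdot\operatorname{dist}(0,F_i)=\frac43\cdot\frac2{\sqrt3}\cdot\frac13$ matches the direct determinant computation. No step is a real obstacle; the only care needed is in the sign convention for the outward normal and in justifying that the pairwise overlaps of the sectors are null sets.
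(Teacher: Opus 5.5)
Your proposal is correct and is the same direct coordinate computation the paper intends; the paper gives no proof beyond ``simple calculation''. There is one arithmetic slip to fix: $u_j\cdot u_k=v_j\cdot v_k-(v_j\cdot v_i)(v_k\cdot v_i)=-\frac13-\frac19=-\frac49$, not $-\frac29$, which would give $\cos\angle(u_j,u_k)=-\frac14$. The correct value also follows from your own observation $\sum_{j\ne i}u_j=0$, since $0=3\cdot\frac89+6\,u_j\cdot u_k$, and it yields $\cos\angle(u_j,u_k)=-\frac12$ as you claim.
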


Set $\Omega_i=\operatorname{int}\conv(\{0\}\cup F_i)$. If $\{i,j,k,l\}=\{0,1,2,3\}$, the interface between $\Omega_i$ and $\Omega_j$ is $\Delta_{kl}$. Then the unit normal pointing from $\Omega_i$ into $\Omega_j$ is
\be\label{eq:pairedfields}
\nu_{ij}=z_i-z_j\mbox{, where } z_i=\sqrt{\frac38}\,v_i.
\ee
$\nu_{ij}, i<j$ are just the Lawlor--Morgan paired calibrations for the $\T$ set \cite[Section~1.1]{LM94}.

\begin{figure}[htbp]
\centering
\begin{subfigure}[b]{.49\textwidth}\centering
\begin{tikzpicture}[scale=1.04,line cap=round]
\coordinate (v0) at (0,1.9);\coordinate (v1) at (-1.8,-.45);
\coordinate (v2) at (.35,-1.6);\coordinate (v3) at (1.8,.15);
\coordinate (o) at (.0875,0);
\fill[pale] (o)--(v1)--(v2)--cycle;
\foreach \i/\j in {0/1,0/2,0/3,1/2,2/3}\draw (v\i)--(v\j);
\draw[dashed] (v1)--(v3);
\foreach \i in {0,1,2,3}{\draw[inkblue,thick] (o)--(v\i);\fill (v\i) circle(1.6pt);}
\fill (o) circle(1.7pt);\node[right] at (o){$0$};
\node[above] at (v0){$v_0$};\node[left] at (v1){$v_1$};
\node[below] at (v2){$v_2$};\node[right] at (v3){$v_3$};
\node at (-.55,-.62){$\Delta_{12}$};
\end{tikzpicture}
\caption{The $K_4$ skeleton and one conical sector.}
\end{subfigure}\hfill
\begin{subfigure}[b]{.47\textwidth}\centering
\begin{tikzpicture}[scale=1.02,>=Latex]
\draw[very thick] (0,0)--(90:1.65);
\draw[very thick] (0,0)--(210:1.65);
\draw[very thick] (0,0)--(330:1.65);
\draw (90:.38) arc(90:210:.38);
\node at (150:1.05){$\Omega_1$};\node at (270:1.1){$\Omega_2$};\node at (30:1.1){$\Omega_3$};
\draw[->,inkblue,thick] (-.55,1.12)--(.55,1.12) node[right]{$\nu_{13}$};
\node[below] at (0,-1.5){$\nu_{13}=z_1-z_3$};
\node at (-.75,.1){$120^\circ$};
\fill (0,0) circle(1.8pt);
\end{tikzpicture}
\caption{A section orthogonal to a triple ray.}
\end{subfigure}
\caption{The paired calibration of the tetrahedral cone. The left panel is schematic and does not preserve angles. In the right panel, the three sector traces meet at $120^\circ$.}
\label{fig:paired}
\end{figure}

Now let us give the idea of using the paired calibrations to prove the Almgren minimality for $\T$. Since $\T$ is a cone, it is enough to prove its minimality in the domain $\mathring\Delta$. 

First let $\widetilde\Omega_0,\ldots,\widetilde\Omega_3$ be a finite-perimeter partition of $\Delta$ with boundary traces on the respective facets $F_i$. Set
$M_{ij}=\partial^*\widetilde\Omega_i\cap\partial^*\widetilde\Omega_j\cap\operatorname{int}\Delta$, and orient its unit normal $\nu_{ij}^M$ from region $i$ toward region $j$. Apply Stoke's formula to each region and sum over $i$, and we get
\begin{align}\label{eq:pairedflux}
\H^2(\T\cap \Delta)=2\sqrt2
&=-\sum_i\int_{F_i}z_i\cdot w_i\,d\H^2
=-\sum_i\left[\sum_{j\ne i}\int_{M_{ij}}z_i\cdot \nu_{ji}^M\,d\H^2\right] \\
&=\sum_{i<j}\int_{M_{ij}}(z_i-z_j)\cdot\nu_{ij}^M\,d\H^2
\leq\sum_{i<j}\H^2(M_{ij}).\notag
\end{align}

As a result, if $M$ is a closed $2$-rectifiable set in $\Delta$ that separates the 4 facets $F_i^\circ=F_i\bs \T, 0\le i\le 3$ (this means that $M\cap \pa\Delta=\T\cap \pa\Delta$, and the $F_i^\circ,0\le i\le 3$ belong to difference connected components of $\Delta\bs M$), then let $\widetilde\Omega_i,0\le i\le 2$ be the 3 connected components of $\Delta\bs M$ that contains $F_i^\circ$, and set $\widetilde\Omega_3=\Delta\bs \overline{\cup_{0\le i\le 2}\widetilde\Omega_i}$. Then the sets $\widetilde\Omega_i,0\le i\le 3$ forms a finite-perimeter partition of $\Delta$ as above. By definition, we know that the above defined $M_{ij}, 0\le i<j\le 3$ are disjoint and contained in $M$. Thus (\ref{eq:pairedflux}) yields that $\H^2(M)\ge \H^2(\T\cap \Delta)$.

As a result, $\T\cap \Delta$ minimizes the Hausdorff measure among all closed $2$-rectifiable sets in $\Delta$ that separates the 4 facets $F_i^\circ$. It is easy to check that this class of separators is larger than the class of all Almgren competitors of $\T\cap \Delta$ in $\Delta$ (see for example \cite{topo}). We then get the Almgren minimality for $\T$.\qed

\medskip

\textbf{Why the product of paired calibrations for $\T$ cannot be used to prove the minimality for the product $\T\times \T$:}

In the key argument of paired calibrations above, the word ``paired'' refers to differences of vector fields on adjacent regions, and the flux computation proves the comparison for the stated partition class. 

Codimension-one separation supplies adjacent-region information; it cannot simply be assumed for a codimension-two image: at a typical point of a set of codimension two, there is no natural way to control the number of regions meeting there. Different pieces may overlap in complicated ways, and the combined contribution of the associated product calibration fields can no longer be controlled by the classical argument. This is the overlap obstruction explained for $\Y\times\Y$ in \cite[Section~1.3.2]{YXY}.

For the product $\T\times \T$, a natural approach is to construct a suitable intrinsic decomposition of the set, so as to control the overlap combined contribution of the calibration vector fields at each point. This is precisely the approach used by the second author in her previous paper \cite{YXY}, using $\FF$ as the coefficient group. Let $E_i, 0\leq i\leq2$, denote the three facets of the triangular convex hull associated with $\Y$, and define $E'_j$ similarly for $\Y'$. From each competitor for $\Y\times \Y'$, the second author extracted nine pieces homologous to the nine products $E_i\times E'_j$, respectively. The combined contribution of the corresponding products of paired calibrations can then be controlled pointwise, even where these pieces overlap. This yields the minimality of $\Y\times \Y'$. But this facet-based decomposition does not directly provide the desired control over overlaps for products involving the tetrahedral cone $\T$ or any regular simplex skeleton cone in higher dimensions, although these cones can be proved minimal by a similar paired-calibration argument (see the work of Lawlor\&Morgan \cite{LM94} and Brakke \cite{Br91}).

Our key idea for dealing with this issue, is to replace the facet-based family by the full family of $\FF$ cycles. This gives many more test currents, but in return their multiplicities satisfy a much more symmetric algebraic rule, so that we can control the overlap algebraically, rather than using the separation condition. In the comming section, we explain this construction first for the tetrahedral cone.

\section{The complete cycle family for the tetrahedral cone}\label{sec:full-cycle}

In this section we develop the complete $\FF$ cycle family for
$\T$. We first introduce the complete family and the spanning competitor
class in Subsection~\ref{subsec:tetra-cycles}, then construct the
associated projections and area vectors in Subsection~\ref{subsec:area-vectors}. In
Subsection~\ref{subsec:tetra-frame} we prove the symmetry identity
and the pointwise estimate. Finally, in
Subsection~\ref{subsec:tetra-minimality} we combine these results
to give a new proof of the Almgren minimality of $\T$.

\subsection{The complete $\FF$ cycle family}\label{subsec:tetra-cycles}

Take all the notations as in Subsection \ref{subsec:classical-paired}. Let $K=\bigcup_{i<j}[v_i,v_j]$ be the one-skeleton of $\Delta$, viewed as the graph $K_4$. Write $e_{ij}=e_{ji}$ for its unoriented edge with vertices $v_i$ and $v_j$. Thus the chain spaces over $K$ are
\be
C_1(K;\FF)=\bigoplus_{i<j}\FF e_{ij},\quad
C_0(K;\FF)=\bigoplus_i\FF v_i,\quad
\partial e_{ij}=v_i+v_j.
\ee
A one-chain is a cycle exactly when the number of selected edges at every vertex is even. 

The four triangular cycles $\partial\chain{F_i}$ are the generators of $H_1(K;\FF)$ and satisfy
\be\label{eq:tri_relation}
\sum_{i=0}^3\partial\chain{F_i}=0.
\ee
This is their only relation. Thus
\be\label{eq:cycle_dim}
H_1(K;\FF)\cong \FF^3\mbox{ and }\dim_\FF H_1(K;\FF)=3.
\ee

Let $G=\FF^3$, with basis $e_1=(1,0,0),e_2=(0,1,0),e_3=(0,0,1)$. For $g=(g_1,g_2,g_3)\in G$, define
\be\label{eq:chains}
P_g=\sum_{i=1}^3g_i\chain{F_i},\qquad
\lambda_g=\partial P_g,\qquad D_g=0*\lambda_g.
\ee
The current $P_g$ is a two-dimensional facet chain,
$\lambda_g$ is a one-cycle in the skeleton, and $D_g$ is its
two-dimensional conical filling. All three assignments are
$\FF$-linear in $g$, and
\be\label{eq:D_boundary}
\partial D_g=\lambda_g,\qquad\spt D_g\subset S.
\ee
The map $g\mapsto\lambda_g$ identifies $G$ with $H_1(K;\FF)$, because $\partial\chain{F_1},\partial\chain{F_2},\partial\chain{F_3}$ form a basis.

\begin{table}[htbp]\centering\small
\renewcommand{\arraystretch}{1.3}
\begin{tabular}{c c l c}
\toprule
$g$&Facets selected in $P_g$&Edge sum for $\lambda_g$&Type\\\midrule
$100$&$F_1$&$e_{02}+e_{23}+e_{30}$&Triangle\\
$010$&$F_2$&$e_{01}+e_{13}+e_{30}$&Triangle\\
$001$&$F_3$&$e_{01}+e_{12}+e_{20}$&Triangle\\
$110$&$F_1,F_2$&$e_{02}+e_{23}+e_{31}+e_{10}$&Quadrilateral\\
$101$&$F_1,F_3$&$e_{03}+e_{32}+e_{21}+e_{10}$&Quadrilateral\\
$011$&$F_2,F_3$&$e_{03}+e_{31}+e_{12}+e_{20}$&Quadrilateral\\
$111$&$F_1,F_2,F_3$&$e_{12}+e_{23}+e_{31}=\partial\chain{F_0}$&Triangle\\\bottomrule
\end{tabular}
\caption{All seven nonzero group elements.}\label{tab:cycles}
\end{table}

\begin{figure}[htbp]\centering
\begin{tikzpicture}[scale=1, line cap=round]
\foreach \s in {0,4.3,8.6}{
\begin{scope}[xshift=\s cm]
\coordinate (a0) at (0,1);\coordinate (a1) at (-1,0);\coordinate (a2) at (0,-1);\coordinate (a3) at (1,0);
\foreach \i/\j in {0/1,0/2,0/3,1/2,1/3,2/3}\draw[gray!40] (a\i)--(a\j);
\foreach \i in {0,1,2,3}\fill (a\i) circle(1.7pt);
\node[above] at (a0){$0$};\node[left] at (a1){$1$};\node[below] at (a2){$2$};\node[right] at (a3){$3$};
\end{scope}}
\begin{scope}\draw[very thick,inkblue] (0,1)--(0,-1)--(1,0)--cycle;\node at (0,-1.65){$\lambda_{100}$};\end{scope}
\begin{scope}[xshift=4.3cm]\draw[very thick,inkblue] (0,1)--(-1,0)--(1,0)--cycle;\node at (0,-1.65){$\lambda_{010}$};\end{scope}
\begin{scope}[xshift=8.6cm]\draw[very thick,inkblue] (0,1)--(0,-1)--(1,0)--(-1,0)--cycle;\node at (0,-1.65){$\lambda_{110}$};\end{scope}
\node at (2.15,0){$+$};\node at (6.45,0){$=$};
\end{tikzpicture}
\caption{Addition modulo two is symmetric difference of edge sets. In the above  $\lambda_{100}+\lambda_{010}=\lambda_{110}$, the common edge $e_{03}$ in the first two cycles occurs twice and cancels. }
\label{fig:cycles}
\end{figure}

\begin{lem}\label{lem:relative}
The boundary map induces natural isomorphisms
\be
H_2(S,K;\FF)\simeq H_1(K;\FF)\simeq G,
\ee
under which $[D_g]$ corresponds to $[\lambda_g]$.
\end{lem}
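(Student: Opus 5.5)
We prove the statement with the long exact sequence of the pair $(S,K)$ with $\FF$ coefficients:
\be
H_2(S;\FF)\to H_2(S,K;\FF)\xrightarrow{\ \partial_*\ } H_1(K;\FF)\to H_1(S;\FF).
\ee
The key observation is that $S=0*K$ is the cone over $K$ with vertex $0$. It is therefore contractible; the straight-line homotopy $(t,x)\mapsto tx$ stays in $S$. Hence $\tilde H_j(S;\FF)=0$ for all $j$. In particular $H_2(S;\FF)=0$ and $H_1(S;\FF)=0$, so the connecting map $\partial_*$ is an isomorphism.

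Composing $\partial_*$ with the identification $g\mapsto\lambda_g$ already established (the cycles $\partial\chain{F_1},\partial\chain{F_2},\partial\chain{F_3}$ form a basis of $H_1(K;\FF)$, and the only relation is \eqref{eq:tri_relation}) gives $H_2(S,K;\FF)\simeq H_1(K;\FF)\simeq G$. Naturality follows from the naturality of the connecting homomorphism.

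It remains to check that $[D_g]$ maps to $[\lambda_g]$. Work at the chain level, with the simplicial structure on $S$ whose $2$-simplices are the sectors $\Delta_{ij}$ and which contains $K$ as a subcomplex. The chain $D_g=0*\lambda_g$ is a sum of sectors $\Delta_{ij}$, one for each edge $e_{ij}$ in $\lambda_g$. By \eqref{eq:chainrules},
\be
\partial D_g=\lambda_g+0*\partial\lambda_g=\lambda_g,
\ee
since $\partial\lambda_g=\partial\partial P_g=0$, which is \eqref{eq:D_boundary}. Thus $D_g$ is a relative cycle, since its boundary lies in $K$. By the definition of the connecting map, $\partial_*[D_g]=[\lambda_g]$.

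One can also avoid the exact sequence and argue directly with simplicial chains. A relative $2$-cycle is a sum $\sum a_{ij}\Delta_{ij}$ whose boundary has no component on the rays $[0,v_i]$. This forces $\sum_j a_{ij}=0$ at each ray, which says exactly that $\sum a_{ij}e_{ij}$ is a cycle in $K$. There are no relative $3$-chains, so there are no relative boundaries to quotient by. Hence $H_2(S,K)=Z_1(K)=H_1(K)$, and the correspondence is $\sum a_{ij}e_{ij}\mapsto 0*\sum a_{ij}e_{ij}$.

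The main obstacle is purely bookkeeping: making sure the simplicial (or singular) homology used agrees with the current-theoretic objects $D_g,\lambda_g$. For finite simplicial complexes with $\FF$ coefficients this is standard, so the step is routine.
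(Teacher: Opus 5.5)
Your proof is correct and follows the paper's argument. Since $S=0*K$ is a contractible cone, $H_2(S;\FF)$ and $H_1(S;\FF)$ vanish, so the connecting map of the long exact sequence of $(S,K)$ is an isomorphism. Your chain-level check that $\partial D_g=\lambda_g$ gives $\partial_*[D_g]=[\lambda_g]$, and your direct simplicial computation $H_2(S,K;\FF)=Z_1(K;\FF)$, spell out details the paper leaves implicit without changing the approach.
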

\nd
The set $S$ is the cone over a graph and hence is contractible. In the long exact sequence of $(S,K)$, both $H_2(S;\FF)$ and $H_1(S;\FF)$ vanish, so the connecting boundary map is an isomorphism. 
\qed

The currents $D_g$ represent the $\FF$ topology that will be preserved under deformation. 

As we can see in the Table \ref{tab:cycles}, the four $\lambda_g$ for triangular type $g$ correspond naturally to the four facet-boundary cycles $\partial\chain{F_i}$ appearing in the classical paired calibration. The remaining three nonzero $\lambda_g$ are quadrilateral cycles. 

\begin{defn}[Spanning competitor]\label{spanningT} Let $\mathscr C(\Delta)$ be the class of all compact sets $F\subset \Delta$ so that $F$ is 2-rectifiable, $\H^2(F)<\infty$, and there exist three currents $\Gamma_r\in \I_2(\R^3;\FF), 1\le r\le 3$, so that 
\be \spt(\Gamma_r)\subset F, \pa \Gamma_r=\lambda_{e_r}.\ee
For every $g=(g_1,g_2,g_3)\in G$, set $\Gamma_g=\sum_{r=1}^3 g_r\Gamma_r$. Then by linearity over $\FF$, we know that 
\be \pa\Gamma_g=\lambda_g=\pa D_g.\ee
\end{defn}

Next, let us look at the pointwise multiplicity of each part $\Gamma_g, g\in \Gx$ on each spanning competitor. For a given $F\in \mathscr C(\Delta)$, and the corresponding $\Gamma_r, 1\le r\le 3$, we apply Lemma \ref{lem:common} on their common carrier $F$, and get the $\FF$-multiplicity $\theta_r: F\to \FF, 1\le r\le 3$, so that $\Gamma_r=\chain{F,\theta_r}$. At every $x\in F$, set 
\be c(x)=(\theta_1(x),\theta_2(x),\theta_3(x))\in G.\ee Then we have
\be \theta_{\Gamma_g}(x)=g\cdot c(x)\mbox{, and }x\in \car \Gamma_g\Leftrightarrow g\cdot c(x)=1.\ee
Next, we will associate with each $g$, an orthogonal projection and an area vector (which takes the role of  calibration).

\subsection{Projection and area vectors}\label{subsec:area-vectors}
Denote the standard Euclidean basis by $\mathbf e_i$, to distinguish it from the $\FF$ basis $e_i$ of $G$. Set $\Gx=G\bs\{0\}$. For $g\in\Gx$, define the real area vector
\be\label{eq:ag}
a_g=q\sum_{i=1}^3 g_iw_i\mbox{, and set } A_g=|a_g|.
\ee
That is, $a_g$ is the sum of the oriented area vectors of the selected facets. Note that although $g\mapsto D_g$ is linear over $\FF$, the map
$g\mapsto a_g$ is not additive with respect to addition modulo two.

The currents $D_g$ record the topology, while the real vector $a_g$ records the size of the corresponding projection:

\begin{lem}\label{lem:polygon}
For each $g\in \Gx$, let $\rho_g:\R^3\to a_g^\perp$ be the orthogonal projection along the area vector $a_g$. Then there is a convex polygon $Q_g\subset a_g^\perp$, with the origin in its relative interior, such that
\be\label{eq:polygon}
(\rho_g)_{\s}D_g=\chain{Q_g},\qquad\H^2(Q_g)=A_g.
\ee
More precisely, for the four triangle type $g$, $Q_g$ is an equilateral triangle of area $2/\sqrt3$, and the three $g$ of quadrilateral type give squares of area $4/3$.
\end{lem}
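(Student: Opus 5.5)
Since $\lambda_g$ is a cycle made of edges $e_{ij}$ and $D_g=0*\lambda_g$, $D_g$ is the sum of the sectors $\chain{\Delta_{ij}}$ over the edges $e_{ij}$ of $\lambda_g$. The cycle $\lambda_g$ is either a triangle $v_iv_jv_k$ or a quadrilateral through all four vertices, so $D_g$ is the cone from $0$ over a closed polygonal curve. We have $(\rho_g)_\s D_g=0*(\rho_g)_\s\lambda_g$ because $\rho_g$ is linear. Hence it suffices to show that $\rho_g(\lambda_g)$ is the boundary of a convex polygon $Q_g$ containing $0$ in its interior, traversed once. Then the cone from $0$ over it is exactly $\chain{Q_g}$: the triangles $\conv\{0,\rho_g v_i,\rho_g v_j\}$ tile $Q_g$ without overlap. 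Alternatively, one can apply Lemma~\ref{lem:topcycle}: the currents $(\rho_g)_\s D_g$ and $\chain{Q_g}$ have the same boundary in the plane $a_g^\perp$, so their difference is zero.

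\textbf{Triangle type.} Here $g=e_r$, or $g=111$ with $a_{111}=q(w_1+w_2+w_3)=-q w_0$ because $\sum w_i=0$ by Lemma~\ref{lem:geometry}. In every case $a_g=q w_i$ for some facet $F_i$, and $\lambda_g=\partial\chain{F_i}$ is the boundary of the triangle with vertices $v_j$, $j\ne i$. Since $w_i=-v_i$ is the normal of $F_i$, the projection $\rho_g$ maps $F_i$ isometrically onto $Q_g=\rho_g(F_i)$, an equilateral triangle of area $q=2/\sqrt3=A_g$. The origin lies in its interior, because the foot of $0$ on $F_i$ is the centroid $-v_i/3$. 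Each sector $\Delta_{jk}$ projects onto the triangle $\conv\{0,\rho v_j,\rho v_k\}$, and these three triangles partition $Q_g$.

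\textbf{Quadrilateral type.} Take, for example, $g=110$, so $a_g=q(w_1+w_2)=-q(v_1+v_2)=q(v_0+v_3)$. Using the explicit coordinates, $v_1+v_2=\frac{2}{\sqrt3}(0,0,-1)$, so $a_g$ is parallel to $\mathbf e_3$ with $|a_g|=q\cdot\frac{2}{\sqrt3}=4/3$. The projection onto the $(x,y)$-plane sends $v_0,v_1,v_2,v_3$ to $\frac1{\sqrt3}(1,1),(1,-1),(-1,1),(-1,-1)$. The cycle $\lambda_{110}=e_{02}+e_{23}+e_{31}+e_{10}$ visits these in the cyclic order $0,2,3,1$, which is the boundary of the square with side $2/\sqrt3$ and area $4/3=A_g$, centered at $0$. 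By the symmetry of $\Delta$ (the permutations of coordinates and sign changes preserving the vertex set permute the three pairings $\{01|23\}$, $\{02|13\}$, $\{03|12\}$), the other two quadrilateral types follow.

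The only real obstacle is checking that the cyclic order of $\lambda_g$ matches the convex order of the projected vertices, so that the cone tiles $Q_g$ exactly once rather than producing a bow-tie. This is verified directly by the coordinate computation above. The consistent orientation issue is absent because we work mod $2$.
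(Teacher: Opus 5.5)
Your proof is correct and follows the paper's argument. In both, you compute the projected vertices in the triangle and quadrilateral cases and observe that the cone from $0$ over the projected cycle is the central triangulation of $Q_g$, covered with multiplicity one. Your symmetry reduction for the other two quadrilateral types and your alternative via the constancy lemma are harmless additions. One small slip: for $g=111$ you correctly compute $a_g=-qw_0=qv_0$, but then write ``$a_g=qw_i$ in every case''. This does not matter, since $\rho_g$ and $A_g$ depend only on the line $\R a_g$ and on $|a_g|$.
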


\begin{table}[H]\centering
\renewcommand{\arraystretch}{1.3}
\begin{tabular}{c c c c}
\toprule
$g$&$a_g$&$A_g$&Projection polygon\\\midrule
$100$&$\frac23(-1,1,1)$&$2/\sqrt3$&Equilateral triangle\\
$010$&$\frac23(1,-1,1)$&$2/\sqrt3$&Equilateral triangle\\
$001$&$\frac23(1,1,-1)$&$2/\sqrt3$&Equilateral triangle\\
$110$&$\frac43(0,0,1)$&$4/3$&Square\\
$101$&$\frac43(0,1,0)$&$4/3$&Square\\
$011$&$\frac43(1,0,0)$&$4/3$&Square\\
$111$&$\frac23(1,1,1)$&$2/\sqrt3$&Equilateral triangle\\\bottomrule
\end{tabular}
\caption{Area vectors. Their directions are normal to the projection planes, and their lengths equal the areas of the projection polygons.}\label{tab:vectors}
\end{table}

\nd
\underline{Triangle type:} 
For $g=e_i$, $a_g=-qv_i$, so the projection plane is parallel to $F_i$. For $j\ne i$, since $v_i\cdot v_j=-1/3$,
$\rho_g(v_j)=v_j+(1/3)v_i$. Since the centre of $F_i$ is $-v_i/3$, $\rho_g$ translates $F_i$ to a triangle centered at the origin without changing its area. The three sectors of $D_g$ project onto the central triangulation of $\rho_g(F_i)$ joining the center to the three edges respectively; the interiors are disjoint and each sector maps affinely and bijectively onto a nondegenerate triangle (see Figure~\ref{fig:polygons}(a)). The projected current therefore has mod-two multiplicity one almost everywhere in the polygon. For $g=111$, we have $\lambda_g=\partial\chain{F_0}$ and $a_g=qv_0$, and the same argument applies to $F_0$.

\underline{Quadrilateral type:} Take $g=110$ for example, $a_g=(4/3)\mathbf e_3$. Along the cycle $v_0,v_2,v_3,v_1,v_0$, the projected vertices are, in order,
\be
\frac1{\sqrt3}(1,1),\quad\frac1{\sqrt3}(-1,1),\quad
\frac1{\sqrt3}(-1,-1),\quad\frac1{\sqrt3}(1,-1).
\ee
They form a square of side $2/\sqrt3$. The four conical triangles project onto its central triangulation (see Figure~\ref{fig:polygons}(b)). Thus we again obtain a current identity, not just coverage of the support. The vector lengths in Table~\ref{tab:vectors} equal the asserted areas.
\qed

\begin{figure}[htbp]\centering
\begin{subfigure}{.45\textwidth}\centering
\begin{tikzpicture}[scale=.95]
\coordinate (o) at (0,0);\coordinate (a) at (90:1.55);\coordinate (b) at (210:1.55);\coordinate (c) at (330:1.55);
\fill[pale] (o)--(a)--(b)--cycle;\fill[gray!12] (o)--(b)--(c)--cycle;
\draw[thick] (a)--(b)--(c)--cycle;\foreach \x in {a,b,c}\draw[inkblue] (o)--(\x);
\fill (o) circle(1.6pt);\node[right] at (o){$0$};
\node[above] at (a){$\rho_g(v_0)$};\node[below left] at (b){$\rho_g(v_2)$};\node[below right] at (c){$\rho_g(v_3)$};
\end{tikzpicture}\caption{$g=100$, $A_g=2/\sqrt3$.}
\end{subfigure}\hfill
\begin{subfigure}{.45\textwidth}\centering
\begin{tikzpicture}[scale=.95]
\fill[pale] (0,0)--(-1.25,1.25)--(1.25,1.25)--cycle;
\fill[gray!12] (0,0)--(-1.25,1.25)--(-1.25,-1.25)--cycle;
\draw[thick] (-1.25,-1.25) rectangle(1.25,1.25);
\foreach \x/\y in {-1.25/-1.25,-1.25/1.25,1.25/-1.25,1.25/1.25}\draw[inkblue] (0,0)--(\x,\y);
\fill (0,0) circle(1.6pt);\node[right] at (0,0){$0$};
\node[above right] at (1.25,1.25){$\rho_g(v_0)$};\node[above left] at (-1.25,1.25){$\rho_g(v_2)$};
\node[below left] at (-1.25,-1.25){$\rho_g(v_3)$};\node[below right] at (1.25,-1.25){$\rho_g(v_1)$};
\end{tikzpicture}\caption{$g=110$, $A_g=4/3$.}
\end{subfigure}
\caption{The projection of $D_g$ covers $Q_g$ with multiplicity one modulo two. }
\label{fig:polygons}
\end{figure}

Now we prove that the projections for $\Gamma_g, g\in \Gx$ for every spanning competitor $F\in \mathscr C(\Delta)$ are still the same as those of $D_g$.

\begin{lem}\label{lem:projectionsum}Let $F\in \mathscr C(\Delta)$, and let $\Gamma_g$ be as in Definition \ref{spanningT}. Then for every $g\in \Gx$, 

$1^\circ$ $(\rho_g)_\s\Gamma_g=\chain{Q_g}$.

$2^\circ$ \be A_g^2\le \int_{\car \Gamma_g}|a_g\cdot n_x|d\H^2(x)=\int_F\mathbf1_{g\cdot c(x)=1}|a_g\cdot n_x|d\H^2(x),\ee where $n_x$ is a unit normal to $T_xF$ defined almost everywhere. As a result, we have
\be \sum_{g\in \Gx}A_g^2\le\int_{F}\left[\sum_{g\in\Gx, g\cdot c(x)=1}|a_g\cdot n_x|\right]d\H^2(x)\ee
\end{lem}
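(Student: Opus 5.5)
For $1^\circ$, I would use the constancy lemma. Fix $g\in\Gx$. By Definition \ref{spanningT} and \eqref{eq:D_boundary}, the current $\Gamma_g-D_g$ has compact support and zero boundary, since $\pa\Gamma_g=\lambda_g=\pa D_g$. Pushforward commutes with boundary, so $(\rho_g)_\s(\Gamma_g-D_g)$ is a compactly supported $2$-dimensional $\FF$ cycle in the plane $a_g^\perp$. It is integral because $\rho_g$ is Lipschitz. Lemma \ref{lem:topcycle} then shows this cycle vanishes. Hence $(\rho_g)_\s\Gamma_g=(\rho_g)_\s D_g=\chain{Q_g}$ by Lemma \ref{lem:polygon}. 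In mod two, $-D_g=D_g$, so signs cause no trouble.

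For $2^\circ$, I would combine $1^\circ$ with the mass estimate of Lemma \ref{lem:pushmass}. First, $\mass(\chain{Q_g})=\H^2(Q_g)=A_g$ by \eqref{eq:polygon}. Since $\car\Gamma_g\subset F$ up to null sets, the lemma lets us use $T_xF$ in the integrand:
\[
A_g=\mass((\rho_g)_\s\Gamma_g)\le\int_{\car\Gamma_g}J_2(\rho_g|_{T_xF})\,d\H^2(x).
\]
The key linear-algebra fact is that for an orthogonal projection onto a plane $u^\perp$ in $\R^3$, with $u$ a unit vector, the Jacobian on a $2$-plane $L$ with unit normal $n$ equals $|u\cdot n|$. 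This is the cosine of the angle between $L$ and $u^\perp$. Here $u=a_g/A_g$, so $J_2(\rho_g|_{T_xF})=|a_g\cdot n_x|/A_g$. Multiplying by $A_g>0$ gives the first inequality.

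The equality $\int_{\car\Gamma_g}=\int_F\mathbf 1_{g\cdot c(x)=1}$ comes from Lemma \ref{lem:common}. Outside a common null set, $\theta_{\Gamma_g}(x)=g\cdot c(x)$, and the carriers have tangent plane $T_xF$. Summing over the seven $g\in\Gx$ and exchanging the finite sum with the integral gives the final inequality.

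I expect no step to be a real obstacle. The only points needing care are these:
\begin{itemize}
\item the mod two constancy lemma applies to the pushed-forward cycle, which is integral and compactly supported in a $2$-plane;
\item the tangent planes of $\car\Gamma_g$ agree a.e.\ with $T_xF$;
\item the Jacobian formula $J_2=|u\cdot n|$ holds;
\item $A_g\neq0$ for every $g\in\Gx$, which Table \ref{tab:vectors} confirms.
\end{itemize}
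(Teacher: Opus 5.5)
Your proposal is correct and follows essentially the same route as the paper. For $1^\circ$ you apply the constancy lemma to the projected cycle $(\rho_g)_\s(\Gamma_g+D_g)$ in the plane $a_g^\perp$, and for $2^\circ$ you combine Lemma~\ref{lem:pushmass} with the Jacobian identity $J_2(\rho_g|_{T_xF})=|a_g\cdot n_x|/A_g$ and the multiplicity rule $\theta_{\Gamma_g}(x)=g\cdot c(x)$.
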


\nd $1^\circ$ By Lemma \ref{lem:polygon}, we know that $(\rho_g)_\s D_g=\chain{Q_g}$. Hence the current $Z=(\rho_g)_\s\Gamma_g+\chain{Q_g}$ satisfies that 
\be \pa Z=\pa (\rho_g)_\s(\Gamma_g)+\pa(\rho_g)_\s D_g=(\rho_g)_\s(\pa\Gamma_g+\pa D_g)=0.\ee
Thus the constancy Lemma \ref{lem:topcycle} yields $Z=0$, which gives the conclusion.

$2^\circ$ Directly by definition we know that at almost every point $x\in F$, 
\be J_2(\rho_g|_{T_xF})=\left|\frac{a_g}{A_g}\cdot n_x\right|.\ee
Thus by Lemmas \ref{lem:pushmass} and $1^\circ$, we have
\be \begin{split}A_g&=\mass(\chain{Q_g})=\mass((\rho_g)_\s\Gamma_g)\le\int_{\car \Gamma_g}J_2(\rho_g|_{T_xF})d\H^2(x)\\
&=\int_{\car \Gamma_g}A_g^{-1}|a_g\cdot n_x|d\H^2(x)=A_g^{-1}\int_F\mathbf1_{g\cdot c(x)=1}|a_g\cdot n_x|d\H^2(x).
\end{split}\ee
Multiply both sides by $A_g$, and we get $2^\circ$.
\qed

\subsection{Symmetry of the area vectors and the pointwise estimate}\label{subsec:tetra-frame}

In this section, we are going to control $\sum_{g\in\Gx, g\cdot c(x)=1}|a_g\cdot n_x|$ at every point. In fact, we are going to give a uniform control on all $c\in G$ and all unit vector $n\in \R^3$:

\begin{thm}[Pointwise estimate]\label{thm:pointwiseT}For every $c\in G$ and every unit vector $n\in \R^3$, the function
\be \Psi(c,n):=\sum_{g\in\Gx, g\cdot c=1}|a_g\cdot n|\le 2\sqrt\kappa=\frac{8\sqrt 2}{3}.\ee
The constant is optimal.
\end{thm}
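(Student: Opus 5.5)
The plan is to reduce the estimate to a spectral bound for a $3\times3$ matrix, via Cauchy--Schwarz. Here $\kappa$ is the largest eigenvalue of the quadratic form built from the area vectors, namely $\kappa=32/9$. If $c=0$ the sum is empty and there is nothing to prove. For $c\ne0$ the linear functional $g\mapsto g\cdot c$ on $G=\FF^3$ is nonzero, so exactly four elements $g\in\Gx$ satisfy $g\cdot c=1$; call this set $H_c$. This is the ``same number of ones'' property from the introduction. Cauchy--Schwarz then gives
\[
\Psi(c,n)=\sum_{g\in H_c}|a_g\cdot n|\le 2\Bigl(\sum_{g\in H_c}(a_g\cdot n)^2\Bigr)^{1/2}=2\,\bigl(n^{T}M_c\,n\bigr)^{1/2},\qquad M_c=\sum_{g\in H_c}a_ga_g^{T}.
\]
So it suffices to show $\|M_c\|_{\op}\le\kappa$ for every nonzero $c$.

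The next step is to compute $M_c$ from Table~\ref{tab:vectors}, and there are two types of $c$.

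For $c=111$, the set $H_c$ consists of the four triangle-type $g$. Their area vectors are $\tfrac23(\pm1,\pm1,\pm1)$ with an even number of minus signs, which is a tetrahedral frame. Hence $M_c=\tfrac{16}{9}I$, and $\Psi\le 8/3<8\sqrt2/3$.

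For each of the other six $c$, $H_c$ contains two triangles and two squares. For example, with $c=100$,
\[
H_c=\{100,110,101,111\},\qquad M_c=\tfrac49\bigl[(-1,1,1)^{\otimes2}+(1,1,1)^{\otimes2}\bigr]+\tfrac{16}{9}\bigl(\mathbf e_2^{\otimes2}+\mathbf e_3^{\otimes2}\bigr).
\]
Its eigenvalues are $8/9$ (for $\mathbf e_1$), $32/9$ (for $(0,1,1)/\sqrt2$) and $16/9$ (for $(0,1,-1)/\sqrt2$). Thus $\|M_c\|_{\op}=32/9=\kappa$.

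To avoid checking the remaining five cases by hand, I would use the symmetry group of the regular tetrahedron. It permutes the facets $F_i$, so it acts linearly on $H_1(K;\FF)\cong G$ and permutes the $\lambda_g$. Since $a_g$ is the oriented area vector of the facet chain, the orthogonal symmetry $R$ maps $a_g$ to $\pm a_{g'}$, the sign coming from the orientation ambiguity of the complementary chain. Signs do not affect $a_ga_g^T$, and $R$ is orthogonal, so the spectra of the $M_c$ are constant on orbits of the dual action on $c$. The remaining step is to check that the six $c$ of the second type form a single orbit. This is the step I expect to need the most care: one must track the induced action on $G$ and on the dual $c$'s. Because $\sum_i\partial\chain{F_i}=0$, the triangle for $F_0$ is $g=111$, so a permutation fixing index $0$ acts by permuting coordinates, while a permutation moving $0$ acts by a less obvious linear map. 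If the orbit bookkeeping becomes messy, the fallback is to write down the six matrices $M_c$ directly; each has the same shape as the $c=100$ case up to a coordinate permutation or sign change.

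For optimality, take $c=100$ and $n=(0,1,1)/\sqrt2$. Then each of the four terms $|a_g\cdot n|$ equals $2\sqrt2/3$, so $\Psi=8\sqrt2/3$ and equality holds in both Cauchy--Schwarz and the eigenvalue bound.
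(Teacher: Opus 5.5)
Your proof is correct, but it reaches the bound on $n^TM_cn$ by a more computational route than the paper. Both arguments begin with the same Cauchy--Schwarz step over the four labels in $H_c$. The paper then simply enlarges the sum: $\sum_{g\in H_c}|a_g\cdot n|^2\le\sum_{g\in\Gx}|a_g\cdot n|^2=\kappa$ by Lemma~\ref{lem:frame}. In your notation, $\kappa I_3-M_c=\sum_{g\in\Gx\setminus H_c}a_ga_g^T$ is positive semidefinite, so $\|M_c\|_{\op}\le\kappa$ holds for every $c$ at once. That needs no case analysis and no orbit bookkeeping, and it is the form that carries over verbatim to general $k$ through Lemma~\ref{lem:sp-frame}.

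What your route buys is the exact spectra: $M_{111}=\tfrac{16}{9}I_3$, and $\{8/9,16/9,32/9\}$ for each of the other six nonzero $c$. Your orbit claim does hold. The six $c\ne0,111$ are precisely the sector labels $\alpha_{ij}=c_p+c_q$. The dual action of the tetrahedral group permutes them transitively, just as it permutes the sectors $\Delta_{ij}$. Since it acts by linear bijections of $G$, it must then fix $111$. Hence you get a refinement the theorem does not need: equality $\Psi=2\sqrt\kappa$ can occur only at these six sector-type $c$, while $c=111$ gives at most $8/3$.

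A minor slip: three of the triangle-type vectors, such as $\tfrac23(-1,1,1)$, have one minus sign, not an even number. They have an even number only after an overall sign change, which is harmless in $a_ga_g^T$.
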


The proof of the above theorem relies on the following lemma on the symmetry of the whole family of area vectors. This is the quantitative reason for keeping all nonzero elements of $G$.

\begin{lem}[Symmetry identity]\label{lem:frame}
Set $\kappa=32/9$. Then
\be\label{eq:frame}
\sum_{g\in\Gx}a_ga_g^T=\kappa I_3,\qquad
\sum_{g\in\Gx}A_g^2=3\kappa=\frac{32}{3}.
\ee
Equivalently, for every $w\in\R^3$,
\be\label{eq:l2frame}
\sum_{g\in\Gx}|a_g\cdot w|^2=\kappa|w|^2.
\ee
\end{lem}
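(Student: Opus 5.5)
The plan is a direct computation from Table~\ref{tab:vectors}, organized so that the symmetry of the regular tetrahedron does most of the work. Since the matrix $\sum_{g\in\Gx}a_ga_g^T$ is symmetric, it suffices to compute it once. The trace statement and the quadratic form statement \eqref{eq:l2frame} then follow immediately: take the trace, and evaluate $w^T(\cdot)w$.

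I will split $\Gx$ into the four triangle-type elements and the three quadrilateral-type elements, and compute each partial sum separately.

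For the triangle type, Table~\ref{tab:vectors} gives $a_g=\pm q v_i$ for $i=0,\dots,3$, with $q=2/\sqrt3$. Since the sign disappears in $a_ga_g^T$, this partial sum equals $q^2\sum_{i=0}^3 v_iv_i^T$. I will compute $\sum_i v_iv_i^T$ from the coordinates \eqref{eq:vertices}.
\begin{itemize}
\item The diagonal entries are each $4\cdot\frac13=\frac43$.
\item The off-diagonal entries are $\frac13(1-1-1+1)=0$.
\end{itemize}
Hence $\sum_i v_iv_i^T=\frac43 I_3$, and the triangle contribution is $\frac43\cdot\frac43 I_3=\frac{16}{9}I_3$. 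Equivalently, one can argue that this sum commutes with the tetrahedral symmetry group, which acts irreducibly on $\R^3$, so it is a multiple of $I_3$; the multiple is fixed by the trace $4q^2/3$.

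For the quadrilateral type, the vectors $a_g$ are $\frac43\mathbf e_k$ for $k=1,2,3$. Their contribution is therefore $\frac{16}{9}\sum_k\mathbf e_k\mathbf e_k^T=\frac{16}{9}I_3$.

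Adding the two parts gives $\frac{32}{9}I_3=\kappa I_3$. Taking the trace gives $\sum_g A_g^2=3\kappa=\frac{32}{3}$. As a consistency check, $4\cdot\frac43+3\cdot\frac{16}{9}=\frac{16}{3}+\frac{16}{3}$. The identity \eqref{eq:l2frame} follows because $w^Ta_ga_g^Tw=(a_g\cdot w)^2$.

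There is no real obstacle. The only point needing care is to confirm the entries of Table~\ref{tab:vectors} from the definition \eqref{eq:ag}, using $w_i=-v_i$ and $\sum_i v_i=0$ from Lemma~\ref{lem:geometry}.
\begin{itemize}
\item For $g=e_i$ with $i=1,2,3$, we get $a_g=-qv_i$.
\item For $g=111$, we get $a_g=-q(v_1+v_2+v_3)=qv_0$.
\item For the pairs, $a_g=-q(v_i+v_j)$. For example, $v_1+v_2=\frac{1}{\sqrt3}(0,0,-2)$, which gives $a_{110}=\frac43\mathbf e_3$.
\end{itemize}
I will verify these entries first and then carry out the sum as described.
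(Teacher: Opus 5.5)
Your proposal is correct and follows the paper's proof essentially verbatim. Like the paper, you split $\Gx$ into the four triangle-type vectors $\pm\frac{2}{\sqrt3}v_i$, whose outer products sum to $\frac{16}{9}I_3$, and the three square-type vectors $\frac43\mathbf e_k$, which contribute another $\frac{16}{9}I_3$; you then take the trace and evaluate the quadratic form. Your preliminary check of the table entries against the definition of $a_g$, and the optional irreducibility remark, are harmless additions.
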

\nd For the 4 triangular type $g$, the corresponding vectors $a_g$ are just $\pm 2/\sqrt 3$ times the four vectors $v_i,0\le i\le 3$. Their outer products sum to a matrix with diagonal entries 4 and off-diagonal entries 0, so their contribution is $16I_3/9$. The three square vectors are $(4/3)\mathbf e_i$ and give another $16I_3/9$. Adding gives the first identity, taking the trace gives the second. Evaluating the quadratic form on $w$ gives \eqref{eq:l2frame}.
\qed

\begin{rem}\label{rem:2.28}
There are two different symmetries involved here.

Geometric symmetry: the Euclidean symmetry group of the tetrahedron permutes
the four triangular cycles and the three quadrilateral cycles.
These form two different orbits and have different lengths. Since outer products are
unchanged by signs, the sum of squares has no preferred direction:
this is the geometric meaning of the symmetry identity.

Algebraic symmetry: since we take the complete family $\Gx$, every nonzero linear functional
$g\longmapsto g\cdot c, c\ne0$ takes the value one on exactly four elements of $G^\times$: 
this is the uniform four-point support used in the proof of
Theorem~\ref{thm:pointwiseT}. 

Notice that the four triangular area vectors alone already satisfy
a symmetry identity, with constant $16/9$. So the aim of adding the quadrilaterals is not to repair any lack of symmetry. Their role is to complete the triangular family to $G^\times$, and hence to obtain the uniform algebraic support property above. Its importance for the product argument will become clear in Remark~\ref{rem:triangle-product}.
\end{rem}

\noindent\textbf{Proof of Theorem \ref{thm:pointwiseT}:}
If $c=0$, the sum is zero. Otherwise, $g\mapsto g\cdot c$ is a nonzero linear functional on $G=\FF^3$. Hence its value 0 and 1 level sets each contain exactly four elements. Since all the elements in its value 1 set are nonzero, hence are in $\Gx$. As a result, by Cauchy-Schwarz and Lemma \ref{lem:frame},
\be \Psi(c,n)\le 2\left(\sum_{g\in\Gx, g\cdot c=1}|a_g\cdot n|^2\right)^{1/2}\le  2\left(\sum_{g\in\Gx}|a_g\cdot n|\right)^{1/2}=2\sqrt\kappa.\ee

To prove that the constant is attained, just take $c=011$ and $n=\frac{1}{\sqrt 2}(0,1,-1)$. The four active elements $g$ so that $g\cdot c=1$ are $010,001,110,101$, and Table \ref{tab:vectors} gives $|a_g\cdot n|=\frac{2\sqrt 2}{3}$ for each of these $g$. Their sum is $\frac{8\sqrt 2}{3}=2\sqrt\kappa$.\qed

\subsection{Minimality of $\T$}\label{subsec:tetra-minimality}

Combine the above results, we first prove the minimality of $\T$ in the spanning competitor class, and then deduce its Almgren minimality.

\begin{thm}[Minimality of $\T$]\label{thm:T-fullcycle} 
$1^\circ$ $\H^2(\T\cap\Delta)=\inf_{F\in \mathscr C(\Delta)}\H^2(F)$.

$2^\circ$ $\T$ is Almgren minimal in $\R^3$.
\end{thm}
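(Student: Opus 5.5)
For $1^\circ$ I would combine Lemma~\ref{lem:projectionsum} with Theorem~\ref{thm:pointwiseT}. Take $F\in\mathscr C(\Delta)$ with its currents $\Gamma_r$ and multiplicity vector $c(x)$. Lemma~\ref{lem:projectionsum}\,$2^\circ$ and Lemma~\ref{lem:frame} give
\[
3\kappa=\sum_{g\in\Gx}A_g^2\le\int_F\Psi(c(x),n_x)\,d\H^2(x).
\]
The integrand is at most $2\sqrt\kappa$ by the pointwise estimate, so
\[
\H^2(F)\ge \frac{3\kappa}{2\sqrt\kappa}=\frac32\sqrt\kappa=\frac32\cdot\frac{4\sqrt2}{3}=2\sqrt2=\H^2(\T\cap\Delta)
\]
by Lemma~\ref{lem:geometry}. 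The infimum is attained because $\T\cap\Delta=S$ itself lies in $\mathscr C(\Delta)$: take $\Gamma_r=D_{e_r}$. By \eqref{eq:D_boundary} these currents are supported in $S$ and have boundaries $\lambda_{e_r}$. Hence the infimum equals $\H^2(S)$.

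For $2^\circ$ I would use Lemma~\ref{lem:ball-testing}. Since $\T$ is a cone, scaling reduces every test to the domain $\mathring\Delta$. The plan is to take a Lipschitz map $f$ with $f(\Delta)\subset\Delta$ that equals the identity near $\partial\Delta$ (or is supported in a slightly smaller ball). Set $F=f(S)$ and $\Gamma_r=f_\s D_{e_r}$. Then $\spt\Gamma_r\subset F$, and since $f$ fixes $K\subset\partial\Delta$ we get $\partial\Gamma_r=f_\s\lambda_{e_r}=\lambda_{e_r}$. So $F\in\mathscr C(\Delta)$, and $1^\circ$ gives $\H^2(S)\le\H^2(f(S))$, which is \eqref{eq:ball-comparison}.

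The main obstacle is the localization. Lemma~\ref{lem:ball-testing} uses balls $B$, while the spanning class uses the simplex $\Delta$. I would handle this in two steps. First, by dilation, any ball $B\Subset\R^3$ centered at the origin fits inside $t\mathring\Delta$ for large $t$. Second, for balls not centered at $0$, enlarge to a large centered ball containing $B$; this is harmless because $f$ is the identity outside $B$. The deformation then fixes a neighborhood of $t\partial\Delta$, and the argument runs with $t\Delta$ and the scaled cycles $t\lambda_g$. Since the whole calibration computation is scale invariant (areas scale by $t^2$), this gives $\H^2(\T\cap B)\le\H^2(f(\T\cap B))$.

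I also need $f(S)$ to be $2$-rectifiable with finite measure, which holds since $f$ is Lipschitz. Compactness of $F$ is immediate.
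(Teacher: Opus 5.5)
Your proposal is correct and follows essentially the same route as the paper. For $1^\circ$ you combine Lemma~\ref{lem:projectionsum}, the symmetry identity and the pointwise bound $2\sqrt\kappa$, and use $\Gamma_r=D_{e_r}$ to show that $S$ attains the infimum; for $2^\circ$ you push forward $D_{e_r}$ under the deformation so that the image lies in $\mathscr C(\Delta)$. The only difference is that you spell out, via Lemma~\ref{lem:ball-testing}, dilation to $t\Delta$ and subtraction of the part fixed outside $B$, the cone reduction that the paper states in one line.
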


\nd $1^\circ$ Take any $F\in  \mathscr C(\Delta)$. Let $\Gamma_g$ be as in Definition \ref{spanningT}. Then Lemmas \ref{lem:projectionsum}, \ref{lem:frame} and Theorem \ref{thm:pointwiseT} give
\be  3\kappa=\sum_{g\in \Gx}A_g^2\le\int_{F}\left[\sum_{g\in\Gx, g\cdot c(x)=1}|a_g\cdot n_x|\right]d\H^2(x)=\int_{F}\Psi(c(x),n_x)d\H^2(x)\le 2\sqrt\kappa \H^2(F). \ee
That is, 
\be\inf_{F\in \mathscr C(\Delta)}\H^2(F)\ge \frac 32\sqrt\kappa.\ee
Now the currents $D_{e_i},1\le i\le 3$ show that $\T\cap \Delta\in \mathscr C(\Delta)$. Lemma \ref{lem:geometry} tells that $\H^2(\T\cap \Delta)=2\sqrt 2=\frac 32\sqrt\kappa$. This gives $1^\circ$.

$2^\circ$  Since $\T$ is a cone, it suffices to prove its Almgren minimality in the domain $\mathring\Delta$.  

Let $\{\varphi_t\}$ be an arbitrary Almgren deformation in $\mathring\Delta$, and set $f=\varphi_1$. Set $F:=f(E\cap\Delta)$. Then $F$ is compact, 2-rectifiable, and has finite $\H^2$-measure.

Define $\Gamma_{r}=f_{\s}(D_{e_r})$. Since $\spt\partial D_{e_r}
=\spt\lambda_{e_r}\subset\partial\Delta$ and $f$ is the identity near $\partial\Delta$,
%
\be
\partial\Gamma_r
=f_{\s}\partial(D_{e_r})
=\partial(D_{e_r})=\lambda_{e_r}.
\ee
Hence $F\in\mathscr C(\Delta)$, and $1^\circ$ gives
\be
\H^4(F)\ge\H^4(\T\cap\Delta).
\ee
Subtracting the common part, we obtain
\be
\H^4((\T\cap\Delta)\bs f(\T\cap\Delta))\leq\H^4(f(\T\cap\Delta)\bs (\T\cap\Delta)).\ee
\qed

\section{Product multiplicities and the pointwise estimate in the tetrahedral case}\label{sec:estimate}

We now pass from one tetrahedral cone to the product. The first problem is to describe what happens when different product faces overlap after a noninjective deformation. The main point is that all local multiplicities can be encoded by a single $\FF$ matrix.

\subsection{Product coefficients and the nine-dimensional tensor space}

Now let $X\cong X'\cong\R^3$, use all the notations of Subsection \ref{subsec:classical-paired} and Section \ref{sec:full-cycle} for $X$, and the same with prime in $X'$. We will look at the product $\T\times \T'$ in the product space $X\times X'$. We fix bases in both cycle spaces, identify $G'$ with $G$ in coordinates, and write the second index as $h\in G$. Set
\be
E=\T\times\T',\quad E_0=S\times S',\quad
\Dcal=\Delta\times\Delta',\quad
\mathcal B=(K\times S')\cup(S\times K').
\ee
Here $S,S'$ are two-dimensional, $E_0$ is four-dimensional, $\Dcal$ is a six-dimensional bounded product domain, and $\mathcal B\subset\partial\Dcal$ is the three-dimensional boundary complex of $S\times S'$. 

\begin{lem}\label{lem:producthomology}
The cross product induces a natural isomorphism
\be\label{eq:producthomology} H_4(E_0,\mathcal B;\FF)
\simeq H_2(S,K;\FF)\otimes_\FF H_2(S',K';\FF).\ee
Under the chosen identifications $H_2(S,K;\FF)\simeq G$ and $H_2(S',K';\FF)\simeq G'$, this gives an isomorphism with $G\otimes G'$.

More precisely, the nine classes $[D_{e_r}\times D'_{e_s}]$, $1\leq r,s\leq3$, form a basis, and for every $(g,h)\in G\times G'$, $[D_g\times D'_h]$ corresponds to the pure tensor $g\otimes h$.
\end{lem}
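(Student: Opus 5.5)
The plan is to use the relative Künneth formula over the field $\FF$ for the product of pairs
\[
(S,K)\times(S',K')=(S\times S',\,(K\times S')\cup(S\times K'))=(E_0,\mathcal B).
\]
Since $\FF$ is a field there are no Tor terms, so the cross product
\[
\bigoplus_{p+q=4}H_p(S,K;\FF)\otimes H_q(S',K';\FF)\to H_4(E_0,\mathcal B;\FF)
\]
is an isomorphism. We work with simplicial (or cellular) homology: $S$ and $K$ are finite polyhedra, and the product of the triangulations gives a CW structure on $E_0$ in which $\mathcal B$ is a subcomplex. For pairs of CW complexes the Künneth formula holds for the cellular chain complexes, because $C_*(E_0,\mathcal B)\cong C_*(S,K)\otimes C_*(S',K')$.

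First we compute all relative groups of the factors. By Lemma~\ref{lem:relative}, $H_2(S,K;\FF)\simeq H_1(K;\FF)\simeq G$. The pair $(S,K)$ has dimension $2$, so $H_p(S,K)=0$ for $p\ge3$. Since $S$ is contractible, the long exact sequence gives $H_1(S,K)\simeq \widetilde H_0(K)=0$, because $K$ is connected. Similarly $H_0(S,K)=0$, since $K\neq\emptyset$ and $S$ is connected. The same holds for the primed pair. In total degree $4$, the only surviving term is therefore $p=q=2$, which yields \eqref{eq:producthomology}. Composing with the identifications $H_2(S,K)\simeq G$ and $H_2(S',K')\simeq G'$ gives $H_4(E_0,\mathcal B)\simeq G\otimes G'$.

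Next we describe the explicit classes. The currents $D_g$ are relative cycles: $\partial D_g=\lambda_g$ is supported in $K$. By \eqref{eq:chainrules},
\[
\partial(D_g\times D'_h)=\lambda_g\times D'_h+D_g\times\lambda'_h,
\]
which is supported in $\mathcal B$, so $D_g\times D'_h$ is a relative $4$-cycle and its class is the cross product $[D_g]\times[D'_h]$. Under the Künneth isomorphism this class corresponds to $[D_g]\otimes[D'_h]$, i.e.\ to $g\otimes h$. The cross product is bilinear and $[D_{e_r}]$, $[D'_{e_s}]$ are bases of the factor groups, so the nine classes $[D_{e_r}\times D'_{e_s}]$ correspond to the basis $e_r\otimes e_s$ of $G\otimes G'$ and hence form a basis. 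Bilinearity also follows at the level of currents, since $g\mapsto D_g$ is $\FF$-linear and the current product is bilinear.

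The main obstacle is the compatibility step. The paper works with integral currents modulo two, whereas Künneth is a statement about simplicial or singular homology. We therefore need that $D_g$, which is a sum of the oriented cone sectors $\Delta_{ij}$ modulo two, is exactly the simplicial chain of those $2$-cells, and that the current product $D_g\times D'_h$ coincides with the cellular cross product of the corresponding product cells $\Delta_{ij}\times\Delta'_{kl}$. To settle this, we view the cells $\Delta_{ij}$ and $\Delta'_{kl}$ as a cellular structure and observe that both products are given by the same cells with multiplicity $\theta\theta'$. This reduces the claim to the purely combinatorial statement, which is the naturality of the cross product.
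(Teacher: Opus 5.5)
Your proposal is correct and matches the paper's proof: apply the relative K\"unneth theorem over the field $\FF$ to the finite CW pair $(S,K)\times(S',K')=(E_0,\mathcal B)$, keep only the $(p,q)=(2,2)$ summand in degree $4$, and use $[Q]\otimes[Q']\mapsto[Q\times Q']$ to identify the classes. Your computation that $H_0(S,K)=H_1(S,K)=0$ is harmless but not needed, since vanishing above degree $2$ already forces $p=q=2$; your identification of the mod-two currents $D_g$ and $D_g\times D'_h$ with cellular chains on the cells $\Delta_{ij}$ and $\Delta_{ij}\times\Delta'_{kl}$ makes explicit a step the paper leaves implicit.
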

\nd
The pairs $(S,K)$ and $(S',K')$ are finite CW pairs, and their
product pair is
\be
(E_0,\mathcal B)
=
\bigl(S\times S',\,
      (K\times S')\cup(S\times K')\bigr).
\ee
Since $\FF$ is a field, the relative K\"unneth theorem
\cite[\S3.B, p.~276]{Hat} gives a natural isomorphism
\be\label{eq:kunneth}
\bigoplus_{\substack{p,q\geq0\\p+q=4}}
H_p(S,K;\FF)\otimes_{\FF}H_q(S',K';\FF)
\xrightarrow{\;\cong\;}
H_4(E_0,\mathcal B;\FF),
\ee
induced by the homology cross product. There are no
$\operatorname{Tor}$ terms over $\FF$.

Since $S$ and $S'$ have dimension two, their relative cellular chain groups vanish in degrees greater than two, and hence
\be
H_j(S,K;\FF)=H_j(S',K';\FF)=0
\qquad\text{for }j>2.
\ee
Consequently, among the summands with $p+q=4$, only the term
$(p,q)=(2,2)$ can be nonzero. Thus \eqref{eq:kunneth} restricts
to a natural isomorphism
\be
\begin{aligned}
\mathcal P:\quad
H_2(S,K;\FF)\otimes_{\FF}H_2(S',K';\FF)
&\longrightarrow H_4(E_0,\mathcal B;\FF),\\
[Q]\otimes[Q']&\longmapsto[Q\times Q'].
\end{aligned}
\ee
This gives the isomorphism \eqref{eq:producthomology} .
%
%
\qed

A general tensor of $G\otimes G'$ has the form
\be\label{eq:tensor_M}
z=\sum_{r,s=1}^3 M_{rs}e_r\otimes e'_s,\qquad M\in M_3(\FF).
\ee
This nine-dimensional space contains $2^9=512$ elements. Among these elements, there are 49 nonzero simple tensors $g\otimes h$, which correspond to the 49 rank-one matrices $gh^T$. They are distinct because the only nonzero scalar in $\FF$ is one. Besides the zero tensor and the 49 nonzero rank-one tensors, the remainings are rank-two or rank-three tensors, and are sums of the above simple tensors, rather than single geometric products.

For the corresponding currents, the product boundary formula gives
\be\label{eq:productboundary}
\partial(D_g\times D'_h)
=
\lambda_g\times D'_h+D_g\times\lambda'_h,
\qquad
\spt\partial(D_g\times D'_h)\subset\mathcal B.
\ee
Since $\mathcal B\subset\partial\Dcal$, these boundary currents
are unchanged by Almgren deformations supported in
$\operatorname{int}\Dcal$. This will allow us to apply the
fixed-boundary comparison in Section~\ref{sec:minimality}.

\subsection{Local multiplicities and incidence matrices}
Let $F\subset\R^6$ be countably 4-rectifiable with finite measure, and let $\Gamma_{rs}, 1\le r,s\le 3$ be nine compactly supported $\FF$-integral currents whose essential carriers lie in $F$. By Lemma~\ref{lem:common}, write
\be\label{eq:C}
\Gamma_{rs}=\chain{F,\theta_{rs}}.
\ee
For every $x\in F$, let 
\be\label{eq:multiplicityC} C(x)=(\theta_{rs}(x))_{r,s=1}^3\in M_3(\FF)\ee
be the matrix of multiplicities on $x$.

Now for the 49 simple tensors $g\otimes h$, $g,h\in \Gx$, define
\be\label{eq:49}
g\otimes h\mapsto\Gamma_{g,h}=\sum_{r,s=1}^3g_rh_s\Gamma_{rs}=\chain{F,\sum_{r,s=1}^3g_rh_s\theta_{rs}}=\chain{F,\tr(C(x)^T(g\otimes h))}.\ee

This determines a linear map from $G\otimes G'$ into the space of currents. For the general tensor \eqref{eq:tensor_M}, the multiplicity of the associated current at $x$ is
\be
\sum_{r,s}M_{rs}\theta_{rs}(x)=\tr(C(x)^TM)\quad\text{in }\FF.
\ee
Intrinsically, $C(x)$ represents a linear functional in $(G\otimes G')^*$.

\begin{pro}\label{prop:parity} Suppose $g,h\in\Gx$.

$1^\circ$ Let $\theta_{g,h}$ be the $\FF$-multiplicity of $\Gamma_{g,h}$ on $F$. Then outside one common $\H^4$-null set, 
\be\label{eq:parity}
\theta_{g,h}(x)=g^TC(x)h,
\mbox{ and }
x\in\car\Gamma_{g,h}\ \Longleftrightarrow\ g^TC(x)h=1.
\ee

$2^\circ$ If $f$ is a Lipschitz map, and $\Gamma_{rs}=f_{\s}(D_{e_r}\times D'_{e_s})$, then one also has the exact current identity
\be\label{eq:pushproduct}
\Gamma_{g,h}=f_{\s}(D_g\times D'_h).
\ee
\end{pro}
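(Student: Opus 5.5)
Part $1^\circ$ follows from Lemma~\ref{lem:common} applied to the nine currents $\Gamma_{rs}$, all carried by $F$. That lemma provides a single $\H^4$-null set $N\subset F$ outside of which two things hold. First, every carrier present at $x$ has approximate tangent plane $T_xF$. Second, the multiplicity of any $\FF$-linear combination $\sum\a_{rs}\Gamma_{rs}$ equals $\sum\a_{rs}\theta_{rs}(x)$. By \eqref{eq:49}, $\Gamma_{g,h}$ is exactly the combination with coefficients $\a_{rs}=g_rh_s$. Hence for $x\notin N$,
\be
\theta_{g,h}(x)=\sum_{r,s=1}^3 g_r\theta_{rs}(x)h_s=g^TC(x)h\quad\text{in }\FF .
\ee
This is one null set that works for all 49 pairs $(g,h)$ at once, since there are only finitely many combinations and in any case the lemma's null set does not depend on the coefficients. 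The equivalence $x\in\car\Gamma_{g,h}\Leftrightarrow g^TC(x)h=1$ is then the definition of the essential carrier, $\car A=\{\theta=1\}$, taken up to the same null set.

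For $2^\circ$ I will use multilinearity at the level of currents. By \eqref{eq:chains}, $g\mapsto D_g=0*\lambda_g$ is $\FF$-linear, since the boundary and cone operators are linear. So $D_g=\sum_r g_rD_{e_r}$, and in the same way $D'_h=\sum_s h_sD'_{e_s}$. The Cartesian product of $\FF$-currents is bilinear, which gives
\be
D_g\times D'_h=\sum_{r,s}g_rh_s\,D_{e_r}\times D'_{e_s}.
\ee
Pushforward by the Lipschitz map $f$ is additive (as recorded after \eqref{eq:chainrules}), so
\be
f_{\s}(D_g\times D'_h)=\sum_{r,s}g_rh_s f_{\s}(D_{e_r}\times D'_{e_s})=\sum_{r,s}g_rh_s\Gamma_{rs}=\Gamma_{g,h}.
\ee

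No step here is really an obstacle. The only points to check are that the cartesian product of flat-chain representatives modulo two is well defined and bilinear (standard in Fleming's theory), and that the carriers of $f_\s(D_{e_r}\times D'_{e_s})$ lie in $F=f(E_0)$, which holds because they lie in $f(\spt(D_{e_r}\times D'_{e_s}))\subset f(S\times S')$. With that, the hypothesis of Lemma~\ref{lem:common} is satisfied and $1^\circ$ applies in the setting of $2^\circ$.
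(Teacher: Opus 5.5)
Your proposal is correct and follows the paper's argument: $1^\circ$ is read off from the common representation of Lemma~\ref{lem:common} (which is what \eqref{eq:49} encodes), and $2^\circ$ comes from expanding $D_g=\sum_r g_rD_{e_r}$ and $D'_h=\sum_s h_sD'_{e_s}$, then using bilinearity of the product and additivity of $f_{\s}$. Your extra checks are refinements the paper leaves implicit: a single null set serves all finitely many combinations, and $\car f_{\s}(D_{e_r}\times D'_{e_s})\subset f(E_0)$.
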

\nd
The first equality follows directly from \eqref{eq:49}. Nonzero multiplicity is equivalent to membership in the essential carrier. Finally, expand $D_g=\sum_rg_rD_{e_r}$ and $D'_h=\sum_sh_sD'_{e_s}$, apply bilinearity of the product and linearity of pushforward,  and we get \eqref{eq:pushproduct}.
\qed

For example, when $g=110$ and $h=101$, one has $g^TCh=C_{11}+C_{13}+C_{21}+C_{23}$. The multiplicity is one if the four selected positions contain an odd number of ones, and zero otherwise. Geometrically, this is just an addition of currents modulo two. A noninjective map can fold several source faces onto the same target face and the folded currents may cancel because of an even number of coverings.

\begin{figure}[htbp]\centering
\begin{tikzpicture}[x=.95cm,y=.95cm]
\foreach \r in {1,2,3}\foreach \s in {1,2,3}{
\draw[gray!60] (\s-1,-\r+1) rectangle(\s,-\r);}
\foreach \r in {1,2}\foreach \s in {1,3}{\fill[pale] (\s-1,-\r+1) rectangle(\s,-\r);\draw[inkblue,thick] (\s-1,-\r+1) rectangle(\s,-\r);}
\foreach \r/\s/\v in {1/1/1,1/2/0,1/3/1,2/1/0,2/2/1,2/3/1,3/1/1,3/2/0,3/3/0}\node at (\s-.5,-\r+.5){$\v$};
\node[left] at (0,-1.5){$C(x)=$};
\node[above,font=\scriptsize] at (.5,.05){$h_1=1$};\node[above,font=\scriptsize] at (1.5,.05){$h_2=0$};\node[above,font=\scriptsize] at (2.5,.05){$h_3=1$};
\node[right] at (3.15,-.5){$g_1=1$};\node[right] at (3.15,-1.5){$g_2=1$};\node[right] at (3.15,-2.5){$g_3=0$};
\node[align=left,anchor=west,font=\small] at (5,-1.5){Four positions are selected;\\three have multiplicity $1$.\\$1+1+0+1=1\quad(\bmod 2)$};
\end{tikzpicture}
\caption{Reading the local multiplicity matrix. It records nine basic currents at one point.}
\label{fig:parity}
\end{figure}

To describe the multiplicities on the six sectors of $S=\T\cap \Delta$ uniformly, set
\be
c_0=0,\qquad c_i=e_i,\quad 1\leq i\leq3.
\ee
Whenever $\{i,j,p,q\}=\{0,1,2,3\}$, define
\be\label{eq:alpha}
\a_{ij}=c_p+c_q.
\ee

The edge $[v_i,v_j]$ belongs exactly to $F_p,F_q$, so the multiplicity of $D_g$ on the relative interior $\mathring{\Delta_{ij}}$ of $\Delta_{ij}$ is $g\cdot\a_{ij}$. Thus, in the relative interior of a product face of $S\times S'$,
\be\label{eq:rankoneoriginal}
C(x)=\a_{ij}(\a'_{kl})^T,
\qquad x\in(\mathring{\Delta_{ij}}\times\mathring{\Delta'_{kl}}).
\ee
After a general pushforward of $S\times S'$, $C(x)$ need not have rank one. Distinct source faces may meet and their contributions may add to higher-rank tensors. Here the geometry of the overlaps created by the deformation enters only through the matrix $C(x)$. 

We now forget this geometric origin for a moment and derive a uniform estimate for every
$C\in M_3(\FF)$.

For $C\in M_3(\FF)$, define the real 0-1 matrix
\be\label{eq:BC}
B_C(g,h)=\mathbf1_{\{g^TCh=1\}},\qquad g,h\in\Gx.
\ee
Thus $C$ is a $3\times3$ $\FF$ matrix, while $B_C$ is a $7\times7$ real matrix, called the incidence matrix. 

\begin{lem}\label{lem:incidence}
Every row and every column of $B_C$ is either zero or contains exactly 4 ones. 
\end{lem}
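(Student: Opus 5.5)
Fix a row index $g\in\Gx$ of $B_C$; the column case follows by applying the same argument to $C^T$, because $g^TCh=h^TC^Tg$ in $\FF$. The entries of row $g$ are $B_C(g,h)=\mathbf 1_{\{g^TCh=1\}}$ for $h\in\Gx$. Set $c:=C^Tg\in G$, computed over $\FF$. Then $g^TCh=c\cdot h$, so row $g$ is the indicator on $\Gx$ of the level set $\{h: c\cdot h=1\}$ of the linear functional $h\mapsto c\cdot h$ on $G=\FF^3$. The whole proof therefore reduces to the counting fact already used in the proof of Theorem~\ref{thm:pointwiseT}.

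There are two cases. If $c=0$, then $c\cdot h=0$ for all $h$, and the row is identically zero. If $c\neq 0$, the functional $h\mapsto c\cdot h$ is a nonzero linear map $\FF^3\to\FF$, hence surjective. Its kernel is a subgroup of index two, so it has $4$ elements, and the level set $\{c\cdot h=1\}$ is the nontrivial coset, also with $4$ elements. None of these elements is $0$, since $c\cdot 0=0$. So all four lie in $\Gx$, and the row contains exactly four ones (at the $h$ in that coset) and three zeros (at the nonzero elements of the kernel).

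For columns, fix $h\in\Gx$, put $c'=Ch$, and repeat the argument with the functional $g\mapsto g\cdot c'$. Which case occurs depends on whether $g\in\ker C^T$ (for rows) or $h\in\ker C$ (for columns). This dichotomy is all the statement asks for, so I would not analyse further which rows vanish. There is no real obstacle. The only point to state carefully is that the ones are counted among $h\in\Gx$, not among all $h\in G$, and this is harmless because $h=0$ never lies in the value-one set.
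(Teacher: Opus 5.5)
Your proposal is correct and matches the paper's proof. Like the paper, you view row $g$ as the value-one level set of $h\mapsto (C^Tg)\cdot h$, split on whether $C^Tg=0$, and note that the nontrivial coset has four elements and excludes $0$. You also spell out the column case via $g\mapsto g\cdot (Ch)$, which the paper leaves implicit.
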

\nd
For any fixed $g$, the row is the restriction of the linear functional $h\mapsto(C^Tg)\cdot h$. If $C^Tg=0$, the row vanishes. Otherwise its kernel in $G$ has four elements, and the affine coset on which it equals one also has four elements. The latter does not contain zero, so all 4 elements belong to $\Gx$.
\qed

See Figure~\ref{fig:incidence} for concrete examples.

\begin{figure}[H]\centering
\begin{tikzpicture}[x=.54cm,y=.54cm]

\begin{scope}
\foreach \r in {1,...,7}\foreach \s in {1,...,7}{
  \draw[gray!45] (\s-1,-\r+1) rectangle(\s,-\r);
  \node[gray!65,font=\scriptsize] at (\s-.5,-\r+.5){$0$};
}
\foreach \r in {2,3,4,5}\foreach \s in {2,3,4,5}{
  \fill[pale] (\s-1,-\r+1) rectangle(\s,-\r);
  \draw[gray!45] (\s-1,-\r+1) rectangle(\s,-\r);
  \node at (\s-.5,-\r+.5){$1$};
}
\foreach \i/\lab in {1/100,2/010,3/001,4/110,5/101,6/011,7/111}{
  \node[left,font=\scriptsize] at (-.12,-\i+.5){$\lab$};
  \node[above,rotate=45,anchor=west,font=\scriptsize] at (\i-.5,.08){$\lab$};
}
\node[left] at (-1.2,-3.5){$B_C=$};
\node[anchor=west,align=left,font=\small] at (9,-3.5){
(a) $\a=(0,1,1)^T$, $C=\a\a^T$. $\operatorname{rank} C$=1.\\[4pt]
Active row and column labels: $010,001,110,101$.\\[4pt]
Each nonzero row or column has four ones.\\[2pt]
The active positions form a $4\times4$ rectangle.};
\end{scope}

\begin{scope}[yshift=-5.7cm]
\foreach \r in {1,...,7}\foreach \s in {1,...,7}{
  \draw[gray!45] (\s-1,-\r+1) rectangle(\s,-\r);
  \node[gray!65,font=\scriptsize] at (\s-.5,-\r+.5){$0$};
}
\foreach \r/\s in {1/1,1/4,1/5,1/7,
                     2/2,2/4,2/6,2/7,
                     4/1,4/2,4/5,4/6,
                     5/1,5/4,5/5,5/7,
                     6/2,6/4,6/6,6/7,
                     7/1,7/2,7/5,7/6}{
  \fill[pale] (\s-1,-\r+1) rectangle(\s,-\r);
  \draw[gray!45] (\s-1,-\r+1) rectangle(\s,-\r);
  \node at (\s-.5,-\r+.5){$1$};
}
\foreach \i/\lab in {1/100,2/010,3/001,4/110,5/101,6/011,7/111}{
  \node[left,font=\scriptsize] at (-.12,-\i+.5){$\lab$};
  \node[above,rotate=45,anchor=west,font=\scriptsize] at (\i-.5,.08){$\lab$};
}
\node[left] at (-1.2,-3.5){$B_C=$};
\node[anchor=west,align=left,font=\small] at (9,-3.5){
(b) $C=\operatorname{diag}(1,1,0)$. $\operatorname{rank}C$=2.\\[4pt]
The row and column labelled $001$ are zero.\\[4pt]
Every other row or column has four ones.\\[2pt]
The active positions do not form a single rectangle.};
\end{scope}


\end{tikzpicture}
\caption{Incidence matrices of ranks one and two. In every case each row and each column contains either zero or four ones. At rank one the active positions form a single rectangular block after reordering the labels, whereas at rank two they do not.}
\label{fig:incidence}
\end{figure}

\subsection{Pointwise estimate in the tetrahedral case}

In this subsection we are going to control, at each point, the sum effect of all product area vectors $a_g\wedge a'_h$ such that the pair $(g,h)$ contribute to this point.

\begin{lem}\label{lem:signnorm}
If a real matrix $\mathsf S$ satisfies $|\mathsf S_{g,h}|=B_C(g,h)$ for every $g,h\in \Gx$, then
\be\label{eq:Sbound}
\|\mathsf S\|_{\op}\leq4.
\ee
Moreover, the bound is sharp: for the nonnegative incidence
matrix itself, $\|B_C\|_{\op}=4$ whenever $C\ne0$.
\end{lem}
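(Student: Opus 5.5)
The upper bound follows from the Schur test, i.e.\ the interpolation bound $\|\mathsf S\|_{\op}\le\sqrt{\|\mathsf S\|_{1}\|\mathsf S\|_{\infty}}$, where $\|\mathsf S\|_{\infty}$ is the maximal absolute row sum and $\|\mathsf S\|_{1}$ the maximal absolute column sum. Since $|\mathsf S_{g,h}|=B_C(g,h)\in\{0,1\}$, Lemma~\ref{lem:incidence} gives that every absolute row sum and every absolute column sum of $\mathsf S$ is either $0$ or exactly $4$. Hence both $\|\mathsf S\|_1\le 4$ and $\|\mathsf S\|_\infty\le4$. To make this self-contained, for real vectors $u,w\in\R^{\Gx}$ we write
\[
|u^T\mathsf S w|\le\sum_{g,h}B_C(g,h)|u_g||w_h|\le\sum_{g,h}B_C(g,h)\tfrac12\bigl(|u_g|^2+|w_h|^2\bigr).
\]
Summing first over $h$ in the first term and over $g$ in the second, and using that each row and column of $B_C$ has at most four ones, this is at most $2|u|^2+2|w|^2$. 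Taking $|u|=|w|=1$ gives $\|\mathsf S\|_{\op}\le4$, which is \eqref{eq:Sbound}.

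For sharpness, assume $C\neq0$. Then some row of $B_C$ is nonzero; indeed, choose $g$ with $C^Tg\neq0$, which exists since $C\ne0$. Let $R$ be the set of $g\in\Gx$ with $C^Tg\neq0$ and $K$ the set of $h\in\Gx$ with $Ch\ne0$; both are nonempty. Test $B_C$ on the normalized indicator vectors $u=\mathbf 1_R/\sqrt{|R|}$ and $w=\mathbf 1_K/\sqrt{|K|}$. Every nonzero row lies in $R$ and has its four ones in columns of $K$, since the nonzero columns are exactly those indexed by $K$. Thus $\mathbf 1_R^TB_C\mathbf 1_K$ equals the total number of ones, which is $4|R|=4|K|$, so $|R|=|K|$. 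Therefore
\[
u^TB_Cw=\frac{4|R|}{\sqrt{|R||K|}}=4.
\]
Combined with the upper bound, this gives $\|B_C\|_{\op}=4$.

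There is no real obstacle here. The only point needing care is the double counting showing $|R|=|K|$, which is needed so that the test vectors produce exactly $4$. Alternatively, one could note that the submatrix of $B_C$ on $R\times K$ is square with all line sums equal to $4$, so it is $4$ times a doubly stochastic matrix, and such a matrix has operator norm exactly $4$.
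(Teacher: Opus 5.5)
Your proposal is correct and follows essentially the paper's route: a Schur-type bound using the ``zero or exactly four ones per line'' count of Lemma~\ref{lem:incidence} (you symmetrize the bilinear form with $|u_g||w_h|\le\frac12(|u_g|^2+|w_h|^2)$, while the paper applies Cauchy--Schwarz row by row to $\|\mathsf S z\|_2^2$), and sharpness by testing $B_C$ on indicator vectors of the nonzero rows and columns. Your double-counting argument that $4|R|=4|K|$, hence $|R|=|K|$, justifies a step the paper only asserts.
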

\nd
For $z\in\R^7$, apply Cauchy--Schwarz row by row and then use Lemma~\ref{lem:incidence}, we obtain
\be
\begin{aligned}
\|\mathsf Sz\|_2^2
&=\sum_g\left|\sum_h\mathsf S_{g,h}z_h\right|^2
\leq\sum_g\left(4\sum_{h}(|\mathsf S_{g,h}||z_h|)^2\right)\\
&=4\sum_h|z_h|^2\left(\sum_gB_C(g,h)\right)
\leq16\|z\|_2^2.
\end{aligned}
\ee
Conversely, if $I,J$ are the nonzero row and column labels (i.e., $I$ is the 7-dimensional 0-1 vector so that its $i$-th coordinate is 1 if and only if the $i$-th row of $B_C$ is nonzero; the same for $J$) then $B_C\mathbf1_J=4\mathbf1_I$ and $|I|=|J|>0$. Hence the unsigned matrix $B_C$ attains norm four.
\qed

The incidence matrix tells us which pairs $(g,h)$ may contribute at a given point. We now need to control the size of the corresponding geometric vectors. This is where the symmetry  identity \eqref{eq:frame} enters.

Let $U,V$ be the real $3\times7$ matrices with columns $a_g,a'_h$, respectively. By \eqref{eq:frame},
\be\label{eq:UV}
UU^T=VV^T=\kappa I_3,\qquad
\|U\|_{\op}=\|V\|_{\op}=\sqrt\kappa\mbox{ with }\kappa=\frac{32}{9}.
\ee
The operator $U^T:n\mapsto(a_g\cdot n)_g$ with $n\in \R^3$ records the components of the seven area vectors in the direction $n$, and
$\|U^Tn\|_2^2=\kappa|n|^2$. Thus $U^T/\sqrt\kappa$ is an isometric embedding. Conversely,
for $z=(z_g)_{g\in\Gx}\in\R^7$, the matrix $U$ acts by
\[
Uz=\sum_{g\in\Gx}z_ga_g,
\qquad
|Uz|\leq\sqrt\kappa\,\|z\|_2.
\]
Now for $K\in M_3(\R)$, set
\be\label{eq:mixedform}
\omega_K=\sum_{i,j=1}^3K_{ij}\,dx_i\wedge dx'_j.
\ee
It is a two-form on $X\times X'\simeq \R^6$. Its comass is
\be
\|\omega_K\|_*=
\sup\{|\omega_K(u,v)|:|u|=|v|=1,\ u\perp v\}.
\ee
\begin{lem}\label{lem:comass}
$\|\omega_K\|_*=\|K\|_{\op}$.
\end{lem}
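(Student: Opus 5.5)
Write $\omega_K(u,v)$ for the value of $\omega_K$ on a pair of vectors $u,v\in X\times X'$. Splitting $u=(u_1,u_2)$ and $v=(v_1,v_2)$ with $u_1,v_1\in X$ and $u_2,v_2\in X'$, the definition \eqref{eq:mixedform} gives
\[
\omega_K(u,v)=u_1^TKv_2-v_1^TKu_2 .
\]
The plan is to prove the two inequalities separately.

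\emph{Lower bound.} Let $x\in X$ and $y\in X'$ be unit vectors with $x^TKy=\|K\|_{\op}$. Take $u=(x,0)$ and $v=(0,y)$. These are orthonormal, and $\omega_K(u,v)=x^TKy=\|K\|_{\op}$. Hence $\|\omega_K\|_*\ge\|K\|_{\op}$.

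\emph{Upper bound.} Let $u,v$ be orthonormal. The triangle inequality gives
\[
|\omega_K(u,v)|\le\|K\|_{\op}\bigl(|u_1||v_2|+|v_1||u_2|\bigr).
\]
So it suffices to show $|u_1||v_2|+|v_1||u_2|\le1$. Set $a=|u_1|$, $b=|u_2|$, $c=|v_1|$, $d=|v_2|$. Then $a^2+b^2=c^2+d^2=1$, and $ad+cb$ is the dot product of the unit vectors $(a,b)$ and $(d,c)$ in $\R^2$. By Cauchy--Schwarz, $ad+cb\le1$. Orthogonality of $u$ and $v$ is not even needed here.

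By the definition of comass recalled just before the lemma, the supremum is taken over orthonormal pairs $u,v$, so the two bounds together give $\|\omega_K\|_*=\|K\|_{\op}$. There is no real obstacle.
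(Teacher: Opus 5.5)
Your proof is correct and follows the paper's argument: the lower bound uses the same orthonormal pair $(x,0),(0,y)$ built from top singular vectors of $K$, and the upper bound, as in the paper, does not use orthogonality. The only cosmetic difference is in the upper bound. The paper writes $\omega_K(u,v)=u^T\begin{pmatrix}0&K\\-K^T&0\end{pmatrix}v$ and gets the norm of this block matrix from its Gram matrix $\operatorname{diag}(KK^T,K^TK)$, whereas you bound the two cross terms separately and finish with Cauchy--Schwarz in $\R^2$.
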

\nd
For $u=(x,x')$ and $v=(y,y')$,
\be
\omega_K(u,v)=x^TKy'-y^TKx'
=u^T\begin{pmatrix}0&K\\-K^T&0\end{pmatrix}v.
\ee
This block operator has norm $\|K\|_{\op}$, because its transpose times itself has diagonal blocks $KK^T$ and $K^TK$. Hence $\|\omega_K\|_*\leq\|K\|_{\op}$. Conversly, choose unit left and right singular vectors $n,m$ with $|n^TKm|=\|K\|_{\op}$. Evaluation of $\omega_K$ on the orthonormal pair $u=(n,0)$ and $v=(0,m)$ attains the bound.
\qed


We are now ready to combine the uniform incidence bound and the symmetry identity \eqref{eq:frame}.

Let $\mathcal U_2$ be the set of unit simple 2-vectors in $X\times X'$. For $C\in M_3(\FF)$ and $\eta\in\mathcal U_2$, set
\be\label{eq:Phi}
\Phi(C,\eta)=\sum_{g,h\in\Gx}\!B_C(g,h)
\left|\pair{\eta}{a_g\wedge a'_h}\right|.
\ee
\begin{thm}[Pointwise estimate]\label{thm:key}
For every $C\in M_3(\FF)$ and $\eta\in\mathcal U_2$,
\be\label{eq:key}
\Phi(C,\eta)\leq4\kappa=\frac{128}{9}.
\ee
The constant is optimal.
\end{thm}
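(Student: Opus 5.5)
The approach is to rewrite $\Phi(C,\eta)$ as a bilinear pairing. The coefficient matrix will be a signed version of the incidence matrix $B_C$, which Lemma~\ref{lem:signnorm} controls. The two outer factors will be the area-vector matrices $U,V$ of \eqref{eq:UV}, which the symmetry identity controls. The $2$-vector $\eta$ enters through a small real $3\times3$ matrix whose trace (nuclear) norm is at most one. Optimality will come from the product structure of a rank-one $C$ together with the extremal configuration found in Theorem~\ref{thm:pointwiseT}.

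\emph{Step 1: reduce $\eta$ to a $3\times 3$ matrix.} Write $\eta=u\wedge v$ with $u=(x_u,x'_u)$ and $v=(x_v,x'_v)$ orthonormal in $X\times X'$. Expanding the $2\times2$ determinant gives
\[
\pair{\eta}{a_g\wedge a'_h}=(x_u\cdot a_g)(x'_v\cdot a'_h)-(x_v\cdot a_g)(x'_u\cdot a'_h)=a_g^TK_\eta a'_h,\qquad K_\eta=x_u{x'_v}^T-x_v{x'_u}^T .
\]
Since $K_\eta$ is a sum of two rank-one matrices, its trace norm satisfies $\|K_\eta\|_{1}\le |x_u||x'_v|+|x_v||x'_u|$. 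Set $a=|x_u|$, $a'=|x'_u|$, $b=|x_v|$, $b'=|x'_v|$. Because $a^2+a'^2=b^2+b'^2=1$, Cauchy--Schwarz applied to $(a,a')\cdot(b',b)$ gives $\|K_\eta\|_1\le1$. This is the dual form of Lemma~\ref{lem:comass}.

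\emph{Step 2: insert signs and use the operator bounds.} Choose $\mathsf S_{g,h}=B_C(g,h)\,\operatorname{sign}(a_g^TK_\eta a'_h)$. Then $|\mathsf S_{g,h}|=B_C(g,h)$, so Lemma~\ref{lem:signnorm} gives $\|\mathsf S\|_{\op}\le4$. Moreover
\[
\Phi(C,\eta)=\sum_{g,h}\mathsf S_{g,h}\,(U^TK_\eta V)_{g,h}=\tr\bigl(K_\eta\,V\mathsf S^TU^T\bigr).
\]
Trace duality now gives
\[
\Phi(C,\eta)\le\|K_\eta\|_1\,\|V\mathsf S^TU^T\|_{\op}\le 1\cdot\sqrt\kappa\cdot4\cdot\sqrt\kappa=4\kappa ,
\]
using $\|U\|_{\op}=\|V\|_{\op}=\sqrt\kappa$ from \eqref{eq:UV}. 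The only delicate point is the trace-norm estimate in Step 1. One has to be sure that $u,v$ are allowed to mix the two factors; Step 1 handles exactly this, and the constraint $|u|=|v|=1$ is what makes $\|K_\eta\|_1\le1$.

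\emph{Step 3: sharpness.} Take $C=\a\a'^T$ with $\a=\a'=011$, and $\eta=(n,0)\wedge(0,n')$ with $n=n'=\tfrac1{\sqrt2}(0,1,-1)$. Then $g^TCh=(g\cdot\a)(h\cdot\a')$, so $B_C(g,h)=\mathbf 1_{g\cdot\a=1}\mathbf 1_{h\cdot\a'=1}$. Also $\pair{\eta}{a_g\wedge a'_h}=(a_g\cdot n)(a'_h\cdot n')$. Hence $\Phi$ factors as $\Psi(\a,n)\Psi(\a',n')$. By the equality case in Theorem~\ref{thm:pointwiseT}, this product equals $(2\sqrt\kappa)^2=4\kappa=128/9$.
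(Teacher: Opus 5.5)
Your proof is correct and is essentially the paper's argument: the same signed incidence matrix $\mathsf S$, the same factorization through $U$, $\mathsf S$, $V$ with bounds $\sqrt\kappa\cdot 4\cdot\sqrt\kappa$, and the same extremal configuration for sharpness. The one difference is that you replace the comass identity $\|\omega_K\|_*=\|K\|_{\op}$ of Lemma~\ref{lem:comass} by its dual form, the trace-norm bound $\|K_\eta\|_1\le1$ combined with trace duality. A minor point: where $a_g^TK_\eta a'_h=0$, take the sign to be $\pm1$ (as the paper does) rather than $\operatorname{sign}(0)=0$, so that $|\mathsf S_{g,h}|=B_C(g,h)$ holds literally; in any case, the proof of Lemma~\ref{lem:signnorm} works verbatim under $|\mathsf S_{g,h}|\le B_C(g,h)$.
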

\nd

Fix $\eta\in \mathcal U_2$. For each $g,h$ so that $B_C(g,h)\ne 0$, choose $\eps_{g,h}\in\{-1,1\}$ so that
\be
\eps_{g,h}\pair{\eta}{a_g\wedge a'_h}
=\left|\pair{\eta}{a_g\wedge a'_h}\right|.
\ee
When $\pair{\eta}{a_g\wedge a'_h}=0$, either sign may be chosen. Set $\mathsf S_{g,h}=\eps_{g,h}B_C(g,h)$ and define
\be
\omega=\sum_{g,h}\mathsf S_{g,h}a_g^\flat\wedge {a'_h}^\flat.
\ee
Then its coefficient matrix is $U\mathsf SV^T$. Lemma~\ref{lem:comass}, \eqref{eq:UV}, and Lemma~\ref{lem:signnorm} give
\begin{align}\label{eq:keychain}
\Phi(C,\eta)=\omega(\eta)
&\leq\|\omega\|_*=\|U\mathsf SV^T\|_{\op}\\
&\leq\|U\|_{\op}\|\mathsf S\|_{\op}\|V\|_{\op}
\leq\sqrt\kappa\,4\,\sqrt\kappa=4\kappa.\notag
\end{align}
To verify the sharpness, take $\a=011$, $C=\a\a^T$; choose $n=\frac1{\sqrt2}(0,1,-1)\in X, m=\frac1{\sqrt2}(0,1,-1)\in X'$, and set $\eta=(n,0)\wedge (0,m)$. Then the active row and column labels are $010,001,110,101$. Table~\ref{tab:vectors} gives, for every active $g$,
$|a_g\cdot n|=2\sqrt2/3$, and the same holds in the second factor. The sixteen terms sum to
$16(2\sqrt2/3)^2=128/9=4\kappa$.
\qed

\begin{figure}[H]\centering
\begin{tikzpicture}[>=Latex,node distance=2.45cm]
\node (p) {$\R^3_{X'}$};
\node[right=of p] (q) {$\R^7$};
\node[right=of q] (r) {$\R^7$};
\node[right=of r] (s) {$\R^3_X$};
\draw[->,thick] (p)--node[above]{$V^T$}node[below]{$\sqrt\kappa$}(q);
\draw[->,thick] (q)--node[above]{$\mathsf S$}node[below]{$4$}(r);
\draw[->,thick] (r)--node[above]{$U$}node[below]{$\sqrt\kappa$}(s);
\end{tikzpicture}
\caption{The factorization $U\mathsf S V^T$.
The map $V^T$ records the inner products with the vectors
$a'_h$, the signed incidence matrix $\mathsf S$ combines these
components, and $U$ forms the resulting linear combination of
the vectors $a_g$. The numbers below the arrows are
operator-norm bounds, giving
$\|U\mathsf S V^T\|_{\op}
\leq\sqrt\kappa\cdot4\cdot\sqrt\kappa=4\kappa$.}
\label{fig:operator}
\end{figure}

\begin{rem}\label{rem:triangle-product}As already mentioned in Remark \ref{rem:2.28}, now the reason for using the complete family becomes clearer in the product argument.

Let $\mathcal T=\{100,010,001,111\}\subset G^\times$
be the four triangular labels. Then on every original face $\Delta_{ij}$, we know that exactly two of the four triangular $g\in\mathcal T$ satisfy $g\cdot\a_{ij}=1$. In other words, each face $\Delta_{ij}$ activate two of the $D_g, g\in\mathcal T$. After a general noninjective deformation, different source faces may overlap and their $\FF$ multiplicities may add. In codimension one we can use separation to deal with this, so that the number of activation on $D_g, g\in\mathcal T$ at each point is  2. But when it comes to the product in codimension 2, the separation does not work, and for a ovelapped face, it might activate all four triangular cycles $D_g$ (e.g. $c=111$). The role of adding the three quadrilateral cycles is precisely to remove
this nonuniformity: 
every nonzero linear functional on $G$ takes the value one on
exactly four nonzero elements. In the product this becomes the
uniform four-point row and column count of $B_C$ in Lemma~\ref{lem:signnorm}
\end{rem}

Finally we build a bridge between the projection and the calibrations.
Set $\widehat a_g=a_g/A_g$ and $\widehat a'_h=a'_h/A'_h$, and define the projection
\be
\Pi_{g,h}=\rho_g\times\rho'_h:
X\times X'\longrightarrow a_g^\perp\times {a'_h}^\perp.
\ee
\begin{lem}\label{lem:jacobian}
If $L\subset X\times X'$ is a 4-dimensional linear plane and $\eta$ is a unit simple normal 2-vector to $L$, then
\be\label{eq:jacobian}
A_gA'_hJ_4(\Pi_{g,h}|_L)
=\left|\pair{\eta}{a_g\wedge a'_h}\right|.
\ee
\end{lem}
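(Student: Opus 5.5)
The identity \eqref{eq:jacobian} is a linear-algebra statement about Jacobians of orthogonal projections. The plan is to reduce it to the standard fact that the Jacobian of an orthogonal projection restricted to a plane equals the inner product of the two unit simple multivectors, and then pass to complementary normals by Hodge duality.

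First I identify $\Pi_{g,h}$ as an orthogonal projection. Since $\rho_g$ is the orthogonal projection of $X$ onto $a_g^\perp$ and $\rho'_h$ is that of $X'$ onto ${a'_h}^\perp$, and $X\perp X'$, the map $\Pi_{g,h}=\rho_g\times\rho'_h$ is the orthogonal projection of $X\times X'$ onto the 4-plane
\[
P=a_g^\perp\times{a'_h}^\perp .
\]
Its orthogonal complement is the 2-plane spanned by the orthonormal pair $(\widehat a_g,0)$ and $(0,\widehat a'_h)$. Hence the unit simple 2-vector $\widehat a_g\wedge\widehat a'_h$ is a unit normal 2-vector of $P$, and it is simple because its two factors are orthogonal.

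Next, let $\xi_L$ and $\xi_P$ be unit simple 4-vectors orienting $L$ and $P$. Then
\[
J_4(\Pi_{g,h}|_L)=|\langle\xi_L,\xi_P\rangle|.
\]
To see this, take an orthonormal basis $u_1,\dots,u_4$ of $L$. The Jacobian is $|\det(\langle u_i,p_j\rangle)|$ for an orthonormal basis $p_j$ of $P$, and this determinant is exactly the inner product of the two 4-vectors.

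Then I pass to normals with the Hodge star. After fixing an orientation of $\R^6$, we have $*\xi_L=\pm\eta$ and $*\xi_P=\pm\widehat a_g\wedge\widehat a'_h$, and $*$ is an isometry from $\Lambda_4$ onto $\Lambda_2$. Therefore
\[
|\langle\xi_L,\xi_P\rangle|=|\langle\eta,\widehat a_g\wedge\widehat a'_h\rangle| .
\]
Multiplying by $A_gA'_h$ and using bilinearity, $A_gA'_h\,\widehat a_g\wedge\widehat a'_h=a_g\wedge a'_h$, gives \eqref{eq:jacobian}.

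The only point needing care is the duality step: the inner product of unit simple $k$-vectors must equal that of their orthogonal complements' unit simple vectors up to sign, which is the isometry property of $*$ on simple multivectors. Everything else is bookkeeping, and the signs disappear under the absolute values.
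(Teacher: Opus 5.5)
Your proposal is correct and follows essentially the same route as the paper's proof. You identify $\Pi_{g,h}$ as the orthogonal projection onto $a_g^\perp\times{a'_h}^\perp$ and write $J_4(\Pi_{g,h}|_L)=|\langle\xi_L,\xi_P\rangle|$. You then use the Hodge star isometry to replace $\xi_L,\xi_P$ by $\pm\eta$ and $\pm\widehat a_g\wedge\widehat a'_h$, and rescale by $A_gA'_h$; you also justify the Jacobian-as-inner-product formula, which the paper takes for granted.
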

\nd
Choose a unit simple 4-vector $\xi$ spanning $L$ and a unit simple 4-vector $\zeta$ spanning the image plane $a_g^\perp\times {a'_h}^\perp$. The 4-dimensional Jacobian of the orthogonal projection is $|\pair\xi\zeta|$. The orthogonal complement of the image plane is spanned by the orthonormal vectors $\widehat a_g,\widehat a'_h$, so $*\zeta=\pm\widehat a_g\wedge\widehat a'_h$, while $*\xi=\pm\eta$. Since the Hodge star preserves inner products,
$J_4=|\pair\eta{\widehat a_g\wedge\widehat a'_h}|$. Multiply by $A_gA'_h$ and we get the conclusion.
\qed

\section{Almgren minimality of the tetrahedral product}\label{sec:minimality}

In this section we are going to prove the Almgren minimality of $\T\times \T'$.

\subsection{Minimality in the spanning competitor class}

Before proving the Almgren minimality for the set $E_0=S\times S'$, we will first work with a larger spanning class. The reason is that the projection argument below uses only the boundary currents; it does not require the competitor currents to come from one common deformation. Once the comparison is proved in this larger class, we will return to an arbitrary Almgren deformation and verify that it belongs to the class.

 Let us take all the notations as before.

\begin{defn}[The spanning competitor class]\label{def:currentclass}
Let $\mathscr C(\Dcal)$ be the class of all the compact sets $F\subset\Dcal$ such that $F$ is 4-rectifiable, $\H^4(F)<\infty$, and there are nine currents $\Gamma_{rs}\in\I_4(\R^6;\FF)$ satisfying
\be\label{eq:prescribedboundary}
\spt\Gamma_{rs}\subset F,\qquad
\partial\Gamma_{rs}=\partial(D_{e_r}\times D'_{e_s})
\quad(1\leq r,s\leq3).
\ee
\end{defn}

\begin{lem}\label{lem:fixedprojection}
Let $F\in\mathscr C(\Dcal)$, choose the nine currents $\Gamma_{rs}, 1\le r,s\le 3$ in its definition, and define $\Gamma_{g,h}$ by \eqref{eq:49}. Then for every $g,h\in\Gx$, 
\be\label{eq:degreeone}
(\Pi_{g,h})_{\s}\Gamma_{g,h}=\chain{Q_g\times Q'_h}.
\ee
\end{lem}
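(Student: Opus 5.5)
The argument copies the proof of Lemma~\ref{lem:projectionsum}~$1^\circ$, now in the product. First I will compute the model pushforward $(\Pi_{g,h})_{\s}(D_g\times D'_h)$. Since $\Pi_{g,h}=\rho_g\times\rho'_h$, the product rule for pushforwards gives
\be
(\Pi_{g,h})_{\s}(D_g\times D'_h)=(\rho_g)_{\s}D_g\times(\rho'_h)_{\s}D'_h=\chain{Q_g}\times\chain{Q'_h}=\chain{Q_g\times Q'_h},
\ee
by Lemma~\ref{lem:polygon} applied in each factor.

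Next I will compare boundaries. By \eqref{eq:49} and bilinearity, $\Gamma_{g,h}=\sum_{r,s}g_rh_s\Gamma_{rs}$. By \eqref{eq:prescribedboundary} and bilinearity of the product of currents,
\be
\partial\Gamma_{g,h}=\sum_{r,s}g_rh_s\,\partial(D_{e_r}\times D'_{e_s})=\partial\Big(\sum_r g_rD_{e_r}\times\sum_s h_sD'_{e_s}\Big)=\partial(D_g\times D'_h).
\ee
So the current
\be
Z=(\Pi_{g,h})_{\s}\Gamma_{g,h}+\chain{Q_g\times Q'_h}=(\Pi_{g,h})_{\s}\big(\Gamma_{g,h}+D_g\times D'_h\big)
\ee
has $\partial Z=(\Pi_{g,h})_{\s}\partial(\Gamma_{g,h}+D_g\times D'_h)=0$, since we work mod two and pushforward commutes with $\partial$.

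Finally, $Z$ is a compactly supported $4$-dimensional $\FF$-integral current. Here $\Gamma_{rs}$ has compact support because $\spt\Gamma_{rs}\subset F$ and $F$ is compact, and the pushforward under the Lipschitz (linear) map $\Pi_{g,h}$ stays integral. Its support lies in the $4$-dimensional linear plane $a_g^\perp\times{a'_h}^\perp$. The constancy Lemma~\ref{lem:topcycle} with $k=4$ then gives $Z=0$, i.e.\ $(\Pi_{g,h})_{\s}\Gamma_{g,h}=\chain{Q_g\times Q'_h}$ mod two.

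There is no real obstacle. The only point needing care is the first step, the identification $\chain{Q_g}\times\chain{Q'_h}=\chain{Q_g\times Q'_h}$ together with the product/pushforward compatibility $(f\times g)_{\s}(A\times B)=f_{\s}A\times g_{\s}B$. Both are recorded among the elementary facts of Subsection~\ref{subsec:currents}.
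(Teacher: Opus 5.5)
Your proposal is correct and follows essentially the same route as the paper's proof. You use bilinearity to get $\partial\Gamma_{g,h}=\partial(D_g\times D'_h)$ and apply Lemma~\ref{lem:polygon} in each factor to compute the model pushforward $\chain{Q_g\times Q'_h}$; the constancy Lemma~\ref{lem:topcycle} then kills the difference cycle in the $4$-plane $a_g^\perp\times{a'_h}^\perp$.
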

\nd
By \eqref{eq:prescribedboundary}, we have
\begin{align*}
\partial\Gamma_{g,h}
&=\pa\left(\sum_{r,s}g_rh_s\Gamma_{rs}\right)=\sum_{r,s}g_rh_s\partial(\Gamma_{rs})
=\sum_{r,s}g_rh_s\partial(D_{e_r}\times D'_{e_s})\\
&=\partial\left(\sum_{r,s}g_rh_sD_{e_r}\times D'_{e_s}\right)
=\partial(D_g\times D'_h).
\end{align*}
On the other hand, Lemma~\ref{lem:polygon} implies
\be
(\Pi_{g,h})_{\s}(D_g\times D'_h)
=((\rho_g)_{\s}D_g)\times((\rho'_h)_{\s}D'_h)
=\chain{Q_g\times Q'_h}.
\ee
Consequently if we set
\be
A=(\Pi_{g,h})_{\s}\Gamma_{g,h}+\chain{Q_g\times Q'_h},
\ee
then $A$ is a compactly supported $\FF$-integral four-current in $a_g^\perp\times {a'_h}^\perp$, so that
\be
\partial A=(\Pi_{g,h})_{\s}\bigl(\partial\Gamma_{g,h}
+\partial(D_g\times D'_h)\bigr)=0.
\ee
As a result, the constancy lemma~\ref{lem:topcycle} gives $A=0$, which gives the conclusion.
\qed

The above lemma tells that  every test current keeps the same mod-two projection as the corresponding current for the reference set $E_0$. This gives the lower projection bounds needed for the area comparison:

\begin{cor}\label{cor:projectionlower}
Under the assumptions of the preceding lemma,
\be\label{eq:projectionlower}
A_gA'_h\leq\int_{\car\Gamma_{g,h}}
J_4(\Pi_{g,h}|_{T_xF})\,d\H^4(x)
\quad(g,h\in\Gx).
\ee
\end{cor}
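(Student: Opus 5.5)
The plan is to combine the exact mass identity of Lemma~\ref{lem:fixedprojection} with the mass–Jacobian inequality of Lemma~\ref{lem:pushmass}, as in the proof of Lemma~\ref{lem:projectionsum} $2^\circ$ in the single-cone case.

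First compute the left side. By Lemma~\ref{lem:fixedprojection}, $(\Pi_{g,h})_{\s}\Gamma_{g,h}=\chain{Q_g\times Q'_h}$. Since this is the mod-two current of a product of two convex polygons with multiplicity one, its mass is $\H^4(Q_g\times Q'_h)=\H^2(Q_g)\H^2(Q'_h)$. By Lemma~\ref{lem:polygon} (applied in both factors) this equals $A_gA'_h$.

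Next bound the mass of the pushforward from above. Apply Lemma~\ref{lem:pushmass} to the compactly supported current $\Gamma_{g,h}\in\I_4(\R^6;\FF)$ and the linear (hence Lipschitz) map $\Pi_{g,h}$. This gives
\[
\mass\bigl((\Pi_{g,h})_{\s}\Gamma_{g,h}\bigr)\le\int_{\car\Gamma_{g,h}}J_4\bigl(\Pi_{g,h}|_{T_x\car\Gamma_{g,h}}\bigr)\,d\H^4(x).
\]
To pass from $T_x\car\Gamma_{g,h}$ to $T_xF$, note that $\car\Gamma_{g,h}\subset F$ up to a null set. This holds because $\spt\Gamma_{rs}\subset F$ for each $r,s$, and $\Gamma_{g,h}$ is an $\FF$-combination of these currents, so its carrier lies in $F$ (Lemma~\ref{lem:common}). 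The final clause of Lemma~\ref{lem:pushmass} (or Lemma~\ref{lem:common} directly) then says the two approximate tangent planes agree $\H^4$-a.e. Combining the two displays yields \eqref{eq:projectionlower}.

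There is no real obstacle here; the only points to check are bookkeeping. One is that $\F$-rectifiability of $F$ gives $T_xF$ almost everywhere on $\car\Gamma_{g,h}$. The other is that the mass of $\chain{Q_g\times Q'_h}$ is the product of areas. Combined with Lemma~\ref{lem:jacobian}, the corollary will later be rewritten as $A_g^2{A'_h}^2\le\int_F B_{C(x)}(g,h)\,|\pair{\eta_x}{a_g\wedge a'_h}|\,d\H^4$, but that step is not part of this statement.
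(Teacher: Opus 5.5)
Your proposal is correct and is the paper's proof: Lemma~\ref{lem:pushmass} bounds $\mass((\Pi_{g,h})_{\s}\Gamma_{g,h})$ by the Jacobian integral over $\car\Gamma_{g,h}$, where $T_xF$ may replace $T_x\car\Gamma_{g,h}$ because $\car\Gamma_{g,h}\subset F$; Lemma~\ref{lem:fixedprojection} together with Lemma~\ref{lem:polygon} then identifies this mass as $\H^2(Q_g)\H^2(Q'_h)=A_gA'_h$. Your extra bookkeeping on the tangent planes only makes explicit what the paper's two-line argument leaves implicit.
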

\nd Apply Lemma \ref{lem:pushmass} then Lemma \ref{lem:fixedprojection}, we have
\be\begin{split} \int_{\car\Gamma_{g,h}}J_4(\Pi_{g,h}|_{T_xF})\,d\H^4(x)&\ge \mass((\Pi_{g,h})_{\s}\Gamma_{g,h})=\H^4(Q_g\times Q'_h)\\
&=\H^2(Q_g)\H^2(Q'_h)=A_gA'_h.
\end{split}\ee

\qed

We have now obtained the two estimates needed for the comparison. The fixed boundary gives the above lower bound for every projected current, while Theorem~\ref{thm:key} gives a uniform upper bound for the sum of the corresponding Jacobians. 

\begin{thm}[Minimality in the spanning competitor class]\label{thm:currentminimal}
For every $F\in\mathscr C(\Dcal)$,
\be\label{eq:areabound}
\H^4(F)\geq8=\H^4(E_0).
\ee
\end{thm}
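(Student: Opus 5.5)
The plan is to mimic the proof of Theorem~\ref{thm:T-fullcycle}~$1^\circ$, now in the product setting. We sum the lower projection bounds of Corollary~\ref{cor:projectionlower} over all $49$ pairs $(g,h)\in\Gx\times\Gx$ and bound the integrand pointwise with Theorem~\ref{thm:key}.

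First, fix $F\in\mathscr C(\Dcal)$ with its nine currents $\Gamma_{rs}$. Lemma~\ref{lem:common} gives the multiplicity matrix $C(x)$ of \eqref{eq:multiplicityC} at $\H^4$-a.e.\ $x\in F$, together with an approximate tangent plane $T_xF$ and a unit simple normal $2$-vector $\eta_x$. By Proposition~\ref{prop:parity}, $x\in\car\Gamma_{g,h}$ if and only if $B_{C(x)}(g,h)=1$, up to a null set. Lemma~\ref{lem:jacobian} turns each Jacobian into $A_g^{-1}A_h'^{-1}|\pair{\eta_x}{a_g\wedge a'_h}|$. Multiplying \eqref{eq:projectionlower} by $A_gA'_h$ and summing then gives
\begin{align*}
\sum_{g,h\in\Gx}A_g^2{A'_h}^2
\le\int_F\sum_{g,h\in\Gx}B_{C(x)}(g,h)\left|\pair{\eta_x}{a_g\wedge a'_h}\right|d\H^4(x)
=\int_F\Phi(C(x),\eta_x)\,d\H^4(x).
\end{align*}

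Second, the left side factors as $\bigl(\sum_gA_g^2\bigr)\bigl(\sum_h{A'_h}^2\bigr)=(3\kappa)^2=9\kappa^2$ by Lemma~\ref{lem:frame}. The right side is at most $4\kappa\,\H^4(F)$ by Theorem~\ref{thm:key}. Hence
\[
\H^4(F)\ge\frac{9\kappa}{4}=\frac94\cdot\frac{32}{9}=8 .
\]
It remains to check that $\H^4(E_0)=\H^2(S)\H^2(S')=(2\sqrt2)^2=8$, which follows from Lemma~\ref{lem:geometry} and the product structure of Hausdorff measure for rectifiable products. For completeness one also notes that $E_0\in\mathscr C(\Dcal)$, via $\Gamma_{rs}=D_{e_r}\times D'_{e_s}$, so the bound is attained. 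This attainment is not needed for the inequality itself.

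The main care point is the measure-theoretic bookkeeping in the first step. The carriers of all $49$ currents $\Gamma_{g,h}$ must be represented on the single rectifiable set $F$ with a common tangent plane outside one $\H^4$-null set, so that $J_4(\Pi_{g,h}|_{T_x\car\Gamma_{g,h}})$ may be replaced by $J_4(\Pi_{g,h}|_{T_xF})$. This is exactly what Lemma~\ref{lem:common} together with the last sentence of Lemma~\ref{lem:pushmass} provide. We also need the identity $\mathbf 1_{\car\Gamma_{g,h}}=B_{C(x)}(g,h)$ a.e., which is Proposition~\ref{prop:parity}~$1^\circ$. Once these are in place, the rest is the algebra above.
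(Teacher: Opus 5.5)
Your proposal is correct and follows the paper's proof essentially step for step. You weight the projection lower bounds by $A_gA'_h$, convert the integrand into $\Phi(C(x),\eta_x)$ via Proposition~\ref{prop:parity} and Lemma~\ref{lem:jacobian}, and then bound it by $4\kappa$ using Theorem~\ref{thm:key} and the symmetry identity to get $\H^4(F)\ge 9\kappa/4=8$. The only cosmetic difference is that you compute $\H^4(E_0)$ as $\H^2(S)\,\H^2(S')$, whereas the paper sums over the $36$ product faces $\Delta_{ij}\times\Delta'_{kl}$.
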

\nd Fix any $F\in \mathscr C(\Dcal)$, choose the $\Gamma_{rs},1\le r,s\le 3$ as in the definition of $\mathscr C(\Dcal)$. Let $C(x)$ be the multiplicity matrix for $\Gamma_{rs}, 1\le r,s\le 3$ on the common carrier $F$, as defined in \eqref{eq:multiplicityC}. Let $\eta_x$ be a measurable unit simple 2-vector fields on $F$, so that $\eta_x\perp T_xF$ almost everywhere.

Multiply \eqref{eq:projectionlower} by the positive weight $A_gA'_h$ and sum over all $g,h$. Then Proposition~\ref{prop:parity}, Lemma~\ref{lem:jacobian}, and Theorem~\ref{thm:key} yield
\be\begin{split}\label{eq:comparisonchain}
&\left(\sum_{g\in\Gx}A_g^2\right)
\left(\sum_{h\in\Gx}{A'_h}^2\right)
=\sum_{g,h\in\Gx}A_g^2{A'_h}^2\\
&\leq\sum_{g,h\in\Gx}A_gA'_h
\int_F\mathbf1_{\{g^TC(x)h=1\}}
J_4(\Pi_{g,h}|_{T_xF})\,d\H^4(x)\\
&=\int_F\Phi(C(x),\eta_x)\,d\H^4(x)\leq4\kappa\,\H^4(F).
\end{split}\ee
By \eqref{eq:frame}, the left-hand side equals $(3\kappa)^2$. Dividing both sides by $4\kappa$ gives $\H^4(F)\geq9\kappa/4=8$.

On the other hand, the thirty-six four-dimensional faces $\Delta_{ij}\times \Delta'_{kl}, i<j,k<l$ of $E_0$ are products of orthogonal triangles. Their relative interiors are disjoint, and their intersections have dimension at most three. Hence
\be\label{eq:referencearea}
\H^4(E_0)=\sum_{i<j,\,k<l}\H^4(\Delta_{ij}\times\Delta'_{kl})
=36\left(\frac{\sqrt2}{3}\right)^2=8.
\ee
The currents $D_{e_r}\times D'_{e_s}$ show that $E_0\in\mathscr C(\Dcal)$, so the lower bound is attained.
\qed

\subsection{Almgren minimality of $E=\T\times\T'$}

We now come to the Almgren minimality of $\T\times \T'$. It remains to pass from the spanning competitor class back to Almgren competitors. For an arbitrary Almgren deformation, we push forward the nine basic currents and verify that their boundaries are unchanged.

\noindent\textbf{Proof of Theorem~\ref{main:min}. }

Take all the notations as before. Since $E=\T\times \T'$ is a cone, it suffices to prove its Almgren minimality in the domain $\mathring\Dcal=\mathring\Delta\times \mathring\Delta'$.

Let $\{\varphi_t\}$ be an arbitrary Almgren deformation in $\mathring\Dcal$, and set $f=\varphi_1$. Set
\be\label{eq:Fequal}
F:=f(E)\cap\Dcal=f(E\cap\Dcal)=f(E_0).
\ee
This set is a finite union of Lipschitz images of compact four-dimensional polyhedra. It is therefore compact, 4-rectifiable, and has finite $\H^4$-measure.

Define $\Gamma_{rs}=f_{\s}(D_{e_r}\times D'_{e_s})$. These currents are supported in $F$. By \eqref{eq:productboundary}, the boundaries of the source currents are supported on $\partial\Dcal$, where $f$ is the identity. Thus
\be
\partial\Gamma_{rs}
=f_{\s}\partial(D_{e_r}\times D'_{e_s})
=\partial(D_{e_r}\times D'_{e_s}).
\ee
Consequently $F\in\mathscr C(\Dcal)$, and Theorem~\ref{thm:currentminimal} gives
\be
\H^4(f(E)\cap\Dcal)=\H^4(F)\geq8=\H^4(E\cap\Dcal).
\ee
Subtracting the common finite term
$\H^4(E\cap f(E)\cap\Dcal)$ from both sides, we obtain
\be
\H^4(E\bs f(E))\leq\H^4(f(E)\bs E).\ee\qed

\section{Almgren minimality of general simplex-skeleton products}\label{sec:simplex-products}

In this section we will extend the preceding argument from the tetrahedral cone to the whole regular-simplex family. We first construct the complete $\FF$ cycle family in one factor, introduce the associated projection and volume (in place of area) vectors, and prove their symmetry identity. Then we will pass to products and generalize the incidence-matrix, the pointwise estimate, and the spanning competitor arguments.

We therefore emphasize only the points where the dimension enters and where the notation differs from the tetrahedral case.

Let us first state the theorem that we are going to prove in this section.

\begin{thm}[Minimality of the orthogonal products of regular simplex-skeleton cones]\label{thm:simplex-main}
Let $m,n\geq2$, and let $X\simeq\R^m$, $X'\simeq\R^n$ be orthogonal. In each factor choose the cone over the codimension-two skeleton of a regular simplex centered at the origin, denoted by $S_m\subset X$ and $S_n\subset X'$. Then
\be\label{eq:sp-main}
E_{m,n}=S_m\times S_n\subset X\times X'
\ee
is an Almgren minimal cone of dimension $d=m+n-2$.
\end{thm}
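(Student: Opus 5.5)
The proof of Theorem~\ref{thm:simplex-main} will copy Sections~\ref{sec:full-cycle}--\ref{sec:minimality} verbatim in structure; only the combinatorial constants change. We fix one factor $X=\R^k$ first. Let $v_0,\dots,v_k$ be the vertices of a regular simplex $\Delta$ inscribed in the unit sphere, so that $\sum v_i=0$ and $v_i\cdot v_j=-1/k$. Let $F_i$ be the facet opposite $v_i$, with outward normal $w_i=-v_i$ and $(k-1)$-volume $q$. Let $K$ be the $(k-2)$-skeleton and $S=S_k\cap\Delta$ the cone over $K$.

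\textbf{Cycle family and projection lemma.}
\begin{enumerate}
\item The cycles $\partial\chain{F_i}$ span $H_{k-2}(K;\FF)$ with the single relation $\sum_{i=0}^k\partial\chain{F_i}=0$, so $G=\FF^k$.
\item Since $S$ is contractible, the argument of Lemma~\ref{lem:relative} gives $H_{k-1}(S,K;\FF)\simeq G$. We define $P_g,\lambda_g,D_g$ exactly as in \eqref{eq:chains}.
\item We put $a_g=q\sum_{i=1}^k g_iw_i$, and let $\rho_g$ be the projection along $a_g$.
\end{enumerate}
The analogue of Lemma~\ref{lem:polygon} is proved conceptually. For $i\in\operatorname{supp}g$ we have $w_i\cdot a_g=q(1-(|g|-1)/k)>0$, because $|g|\le k$. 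For every other index, including $i=0$, we have $w_i\cdot a_g=-q|g|/k<0$. So the facets selected by $P_g$ are exactly the upper facets of $\Delta$ in the direction $a_g$. Hence $\rho_g$ maps $P_g$ bijectively onto the convex polytope $Q_g=\rho_g(\Delta)$, which contains $0$ in its interior, and $(\rho_g)_\s\lambda_g=\partial\chain{Q_g}$. Coning from $0$ gives $(\rho_g)_\s D_g=0*\partial\chain{Q_g}=\chain{Q_g}$. Cauchy's projection formula gives $\H^{k-1}(Q_g)=\sum_{i\in g}q\,|w_i\cdot\widehat a_g|=\widehat a_g\cdot a_g=A_g$. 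The constancy and mass arguments of Lemma~\ref{lem:projectionsum} then apply unchanged.

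\textbf{Symmetry identity and incidence count.} The symmetric group $\mathfrak S_{k+1}$ acts on the vertices. It permutes the set $\{\pm a_g\}$, because a subset of $\{0,\dots,k\}$ and its complement give opposite vectors, as $\sum_{i=0}^k w_i=0$. It also acts irreducibly on $\R^k$ as the standard representation. By Schur's lemma, $\sum_{g\in\Gx}a_ga_g^T=\kappa_kI_k$, which replaces Lemma~\ref{lem:frame}. For the incidence count, every nonzero functional on $\FF^k$ equals one on exactly $2^{k-1}$ elements, all of them nonzero. So each row of $B_C$ (a $(2^m-1)\times(2^n-1)$ matrix) has $0$ or $2^{n-1}$ ones, and each column has $0$ or $2^{m-1}$ ones.

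\textbf{Operator-norm and pointwise estimate.} The Schur test, i.e.\ row-wise Cauchy--Schwarz as in Lemma~\ref{lem:signnorm}, gives $\|\mathsf S\|_{\op}\le 2^{(m+n-2)/2}$. Lemma~\ref{lem:comass} and Lemma~\ref{lem:jacobian} carry over with $2$-vectors normal to $d$-planes. Thus the factorization $U\mathsf SV^T$ yields
\[
\Phi(C,\eta)\le 2^{(m+n-2)/2}\sqrt{\kappa_m\kappa_n}.
\]

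\textbf{Spanning-class bound.} The relative Künneth theorem in degree $d=m+n-2$ again has only the $(m-1,n-1)$ term. So the spanning class $\mathscr C(\Dcal)$ is defined with $mn$ currents, and Lemma~\ref{lem:fixedprojection} and Corollary~\ref{cor:projectionlower} go through. The chain \eqref{eq:comparisonchain} then yields
\[
\H^d(F)\ \ge\ \frac{m\kappa_m}{\sqrt{2^{m-1}\kappa_m}}\cdot\frac{n\kappa_n}{\sqrt{2^{n-1}\kappa_n}}.
\]
The Almgren step is identical to the proof of Theorem~\ref{main:min}.

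\textbf{Main obstacle: sharpness.} The remaining point, and the one needing care, is that this bound equals $\H^d(E_0)$. I would not compute volumes directly. Instead I would show that every inequality in the chain is an equality for $F=E_0$.
\begin{itemize}
\item On the face of $S$ that is the cone over the $(k-2)$-face missing $v_i,v_j$, the multiplicity is $\alpha_{ij}$. The active $g$ are exactly the labels separating $i$ from $j$ (modulo complement).
\item With $n\propto v_i-v_j$, each active $g$ has $|a_g\cdot n|=q|v_i\cdot n|$, the same value for all of them. Every inactive $g$ has $a_g\cdot n=0$.
\item Hence the Cauchy--Schwarz step is an equality, and the identity $\sum_g|a_g\cdot n|^2=\kappa$ is used with no loss.
\item On product faces, $C=\alpha\alpha'^T$ has rank one, and $\mathsf S$ is a constant-sign $2^{m-1}\times2^{n-1}$ block times the corresponding singular vectors. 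So the operator-norm step is also an equality.
\item The projection inequalities are equalities because $\rho_g$ is injective on $\car D_g$.
\end{itemize}
Thus $\H^d(E_0)$ attains the lower bound, and $E_{m,n}$ is Almgren minimal.
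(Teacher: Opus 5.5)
Your proposal is correct and follows the paper's architecture: complete $\FF$ cycle family, volume vectors, symmetry identity, incidence counts, the factorization $U\mathsf S V^T$, the spanning class, and the Almgren step. Two sub-arguments differ.

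For the symmetry identity, the paper computes $\sum_g a_ga_g^T$ from the bit count $\sum_g gg^T=2^{k-2}(I_k+\mathbf 1\mathbf 1^T)$ and the frame identity for the $w_i$. This gives the explicit constant $\alpha_k=q_k^2\,2^{k-2}(k+1)/k$. Your Schur's-lemma argument yields the identity without the constant, which your proof never needs: the symmetry group of the simplex permutes the lines $\R a_g$ (via complementation in $\{0,\dots,k\}$) and acts irreducibly on $\R^k$.

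The more substantive difference is how you identify the lower bound with $\H^d(E^0_{m,n})$. The paper computes $q_k$, $\H^{k-1}(K_{ij})$ and $M_k$ by Gram determinants (Lemma~\ref{lem:sp-model-volume}). You instead show that $E^0_{m,n}$ saturates every inequality in the comparison chain.
\begin{itemize}
\item The projection bounds are sharp because $\rho_g$ is injective on $\car D_g$.
\item The pointwise bound is sharp because on $K_{ij}\times K'_{ab}$ one has $C=\delta_{ij}{\delta'_{ab}}^T$ and $\eta=(\nu_{ij},0)\wedge(0,\nu'_{ab})$ with $\nu_{ij}\propto v_i-v_j$. Hence $\Phi_{m,n}$ factors as a product of two one-factor sums. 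Each equals its Cauchy--Schwarz bound, since $|a_g\cdot\nu_{ij}|$ is constant on the active $g$ and vanishes on the inactive ones.
\end{itemize}
The paper does exactly this computation on $K_{01}\times K'_{01}$ in Theorem~\ref{thm:sp-pointwise}, but only to show that $\Lambda_{m,n}$ is optimal. You turn it into a proof that the bound is attained. That removes the need for the volume lemma and explains why equality holds. The paper's route, in exchange, gives explicit constants and an independent numerical check.

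When you write this up, justify the injectivity in one line. Since $w_i\cdot a_g\neq0$ for every $i$, no facet is parallel to $a_g$. So $\rho_g$ maps the silhouette $\car\lambda_g$ injectively onto $\partial Q_g$, and each ray from $0\in\operatorname{int}Q_g$ meets $\partial Q_g$ exactly once. Also, the direct factorization of $\Phi_{m,n}$ shows the equality more cleanly than your singular-vector remark.

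There are two harmless slips. For $k=2$ the facet cycles span the reduced group $\widetilde H_0(K;\FF)$, not $H_0(K;\FF)$. And the label of the face missing $v_i,v_j$ is $\delta_{ij}=c_i+c_j$, not the tetrahedral $\alpha_{ij}$, which is indexed by the vertices the face contains.
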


Here each factor has codimension one, hence the orthogonal product always has codimension two. 

Theorem \ref{thm:simplex-main} is the detailed form of Theorem~\ref{main:simplex}. As in the tetrahedral case, competitors need not be injective, preserve the product structure, or retain the original number of faces. 

We will first discuss notations and geometric properties in a single $\R^k$ in Subsections~\ref{subsec:simplex-model}--\ref{subsec:simplex-frame}.
We will only turn to product spaces from
Subsection~\ref{subsec:simplex-incidence}.

\subsection{The simplex-skeleton cone}\label{subsec:simplex-model}
Let  $k\geq2$ be an integer. In $\R^k$ we choose a regular simplex centered at 0:
\be\label{eq:sp-simplex}
\Delta^k=\conv\{v_0,\ldots,v_k\},\qquad
|v_i|=1,\qquad v_i\cdot v_j=-\frac1k\quad(i\ne j).
\ee
The vertices span $\R^k$ and satisfy $\sum_{i=0}^k v_i=0$.

Let $F_i$ be the $(k-1)$-face opposite $v_i$, and let $\operatorname{skel}_{k-2}(\Delta^k)$ denote the union of all faces of dimension at most $k-2$. Set
\be\label{eq:sp-cone}
\begin{split}
S_k&=\{t x:t\geq0,\ x\in\operatorname{skel}_{k-2}(\Delta^k)\},\\
K_{ij}&=\conv\bigl(\{0\}\cup(F_i\cap F_j)\bigr)
\qquad(0\leq i<j\leq k).
\end{split}
\ee
So $S_k$ is the cone over the $k-2$-skeleton of $\Delta^k$, and $K_{ij}, 0\le i<j\le k$ are the $k-1$-dimensional faces of $S_k\cap \Delta^k$. Each $K_{ij}$ is a $(k-1)$-simplex, and
\be\label{eq:sp-sectors}
S_k\cap\Delta^k=\bigcup_{0\leq i<j\leq k}K_{ij}.
\ee
The above union is essentially disjoint, in the sense that the $K_{ij}$ have disjoint relative interiors and intersections of dimension at most $k-2$.

For $k=2$, $S_2$ is the 1-dimensional $\Y$-set consisting of three rays at $120^\circ$; for $k=3$, $S_3=\T$. Here $K_{ij}$ is indexed by \emph{adjacent outer facets}. In the tetrahedral case, if $\{i,j,l,r\}=\{0,1,2,3\}$, then $K_{ij}=\Delta_{lr}$.

\bigskip

We write
\be\label{eq:sp-facet-data}
w_i=-v_i,\qquad q_k=\H^{k-1}(F_i) \mbox{ (independent of $i$) }.
\ee
Then $w_i$ is the outward unit normal of $F_i$. We also have
\be\label{eq:sp-wframe}
\sum_{i=0}^k w_i=0,\qquad
w_i\cdot w_j=-\frac1k\ (i\ne j),\qquad
\sum_{i=0}^k w_iw_i^T=\frac{k+1}{k}I_k.
\ee

\subsection{The complete $\FF$ cycle family}\label{subsec:simplex-currents}
Let $G_k=\FF^k$ be the coefficient space, and let $e_i, 1\le i\le k$ be the standard basis of $G_k=\FF^k$.

Set
\be\label{eq:sp-labels}
c_0=0,\qquad c_i=e_i\ (1\leq i\leq k),\qquad
\d_{ij}=c_i+c_j\in G_k.
\ee
Write $G_k^\times=G_k\bs\{0\}$.

\begin{rem}Here the notation $\d_{ij}$ should be distinguished from the
tetrahedral label $\a_{ij}$ introduced in \eqref{eq:alpha}.
The difference comes only from the indexing convention. In the
tetrahedral case, $\Delta_{ij}$ is indexed by its two vertices,
while here $K_{ij}$ is indexed by the two facets $F_i,F_j$.
If $\{i,j,p,q\}=\{0,1,2,3\}$, then $K_{ij}=\Delta_{pq},\qquad \d_{ij}=\a_{pq}$. Thus the two notations encode the same multiplicity rule.
\end{rem}

For each $g=(g_1,\ldots,g_k)\in G_k$, define the $\FF$ polyhedral currents
\be\label{eq:sp-current-chains}
\begin{aligned}
P_g&=\sum_{i=1}^k g_i\chain{F_i}, &\lambda_g&=\partial P_g,\\
U_g&=0*P_g, &D_g&=P_g+\partial U_g.
\end{aligned}
\ee
The currents $P_g,D_g$ have dimension $k-1$, $\lambda_g$ has dimension $k-2$, and $U_g$ has dimension $k$. All four currents vanish for $g=0$.

\begin{lem}\label{lem:sp-current}
For every $g\in G_k$,
\be\label{eq:sp-Dformula}
D_g=\sum_{0\leq i<j\leq k}
\bigl(g\cdot\d_{ij}\bigr)\chain{K_{ij}},\qquad
\partial D_g=\lambda_g.
\ee
In particular, $\spt D_g\subset S_k\cap\Delta^k$, $\spt\partial D_g\subset\partial\Delta^k$, and $g\mapsto D_g$ is $\FF$-linear.
\end{lem}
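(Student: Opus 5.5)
The plan is to compute $D_g$ directly from its definition using the cone formula \eqref{eq:chainrules}, reduce it to a cone over $\lambda_g$, and then identify $\lambda_g$ explicitly in terms of the ridges $R_{ij}:=F_i\cap F_j$ of $\Delta^k$.

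\emph{Step 1: $D_g$ is the cone over $\lambda_g$.} Since $P_g$ has dimension $k-1\geq1$, the identity $\partial(p*A)=A+p*(\partial A)$ gives
\[
\partial U_g=\partial(0*P_g)=P_g+0*\partial P_g=P_g+0*\lambda_g .
\]
Hence $D_g=P_g+\partial U_g=2P_g+0*\lambda_g=0*\lambda_g$ modulo two. The boundary identity then follows at once:
\[
\partial D_g=\partial P_g+\partial\partial U_g=\lambda_g .
\]
This also makes $g\mapsto D_g$ visibly $\FF$-linear, since $P_g$ is linear in $g$ and both the boundary and the cone operator are linear.

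\emph{Step 2: the boundary of the facet chain.} Modulo two, the boundary of the $(k-1)$-simplex $\chain{F_i}$ is the sum of its codimension-one faces, $\partial\chain{F_i}=\sum_{j\neq i}\chain{R_{ij}}$; over $\FF$ no orientation bookkeeping is needed. Therefore
\[
\lambda_g=\sum_{i=1}^k g_i\sum_{j\neq i}\chain{R_{ij}}=\sum_{0\le i<j\le k}\chi_{ij}\chain{R_{ij}} .
\]
The coefficient $\chi_{ij}$ is read off by cases. A ridge $R_{ij}$ with $1\le i<j$ is hit from $F_i$ and from $F_j$, so $\chi_{ij}=g_i+g_j$. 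A ridge $R_{0j}$ is hit only from $F_j$, because $F_0$ is not among the summands of $P_g$, so $\chi_{0j}=g_j$. Using $c_0=0$, both cases equal $g\cdot(c_i+c_j)=g\cdot\d_{ij}$.

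\emph{Step 3: coning the ridges.} The cone operator is linear, so it remains to check that $0*\chain{R_{ij}}=\chain{K_{ij}}$. This holds because $K_{ij}=\conv(\{0\}\cup R_{ij})$ and $0$ does not lie in the affine hull of $R_{ij}$, since the origin is interior to $\Delta^k$. The join is therefore a nondegenerate $(k-1)$-simplex carried with multiplicity one. Combining the three steps yields \eqref{eq:sp-Dformula}.

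\emph{Support claims.} These are immediate: $\spt D_g\subset\bigcup K_{ij}=S_k\cap\Delta^k$ by \eqref{eq:sp-sectors}, and $\spt\partial D_g=\spt\lambda_g\subset\bigcup R_{ij}\subset\partial\Delta^k$.

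\emph{Main obstacle.} The only delicate point is the convention-level one in Step 3: the join of a polyhedral mod-two chain is the expected polyhedral chain with unit multiplicity, and the degenerate case $k=2$ still works. In that case $P_g$ is $1$-dimensional, the ridges are the vertices $v_i$, and the cones are the segments $[0,v_i]$. The cone formula remains valid since $\dim P_g=1\geq1$.
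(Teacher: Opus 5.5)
Your proof is correct and is essentially the paper's argument. The paper writes $U_g=\sum_{i=1}^k g_i\chain{\mathcal P_i}$ with the pyramids $\mathcal P_i=\conv(\{0\}\cup F_i)$ and uses $\partial\chain{\mathcal P_i}=\chain{F_i}+\sum_{j\ne i}\chain{K_{ij}}$, which is your cone formula plus the identity $0*\chain{R_{ij}}=\chain{K_{ij}}$ applied facet by facet; it then reads off the coefficient $g\cdot c_i+g\cdot c_j$ just as you do, and your intermediate identity $D_g=0*\lambda_g$ additionally confirms that this definition agrees with the one used in the tetrahedral section.
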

\nd
For $0\leq i\leq k$, set $\mathcal P_i=\conv(\{0\}\cup F_i)$. Its boundary consists of $F_i$ and the $k$ radial faces $K_{ij}$, $j\ne i$, so
\be
\partial\chain{\mathcal P_i}
=\chain{F_i}+\sum_{j\ne i}\chain{K_{ij}}.
\ee
Here we temporarily set $K_{ji}=K_{ij}$. Also $U_g=\sum_{i=1}^k g_i\chain{\mathcal P_i}$. Hence by \eqref{eq:sp-current-chains},
\be D_g=P_g+\pa U_g=\sum_{i=1}^k g_i\sum_{j\ne i}\chain{K_{ij}}.\ee
Thus the coefficient of the radial face $K_{ij}$ is $(g\cdot c_i)+(g\cdot c_j)=g\cdot\d_{ij}$.
This proves the first identity. The second follows from
$\partial D_g=\partial P_g+\partial^2U_g=\partial P_g$. All constructions are linear modulo two.
\qed

Since
\[
D_g=\sum_{r=1}^k g_rD_{e_r},
\qquad g\in G_k,
\]
the family $\{D_g:g\in G_k^\times\}$ consists precisely of all nonzero $\FF$ combinations of the $k$ basic currents
$D_{e_1},\ldots,D_{e_k}$. Thus the complete family contains $2^k-1$ currents. On the face $K_{ij}$, the multiplicity of
$D_g$ is $g\cdot\d_{ij}$.

\subsection{Projection and volume vectors}\label{subsec:simplex-projections}
For $g\in G_k^\times$, define
\be\label{eq:sp-area-vector}
a_g=q_k\sum_{i=1}^k g_iw_i\in\R^k.
\ee
Unlike the current map $g\mapsto D_g$, the map $g\mapsto a_g$ is not regarded as $\FF$-linear. Set
\be
I_g=\{i\in\{1,\ldots,k\}:g_i=1\},\qquad s=|I_g|.
\ee
Since $g\ne0$, we know that $1\leq s\leq k$. By \eqref{eq:sp-wframe}, we have
\be\label{eq:sp-signs}
w_i\cdot a_g=
\begin{cases}
q_k(k+1-s)/k,&i\in I_g,\\
-q_ks/k,&i\notin I_g,
\end{cases}
\qquad(0\leq i\leq k).
\ee
As a result,
\be\label{eq:sp-vector-length}
\begin{split}
|a_g|^2&=q_k^2\left|\sum_{i\in I_g}w_i\right|^2=q_k^2\left(\sum_{i\in I_g}|w_i|^2+2\sum_{i<j,i,j\in I_g}w_i\cdot w_j\right)\\
&=q_k^2\left(s+2\binom{s}{2}\left(-\frac1k\right)
\right)=q_k^2\left(s-\frac{s(s-1)}k\right)\\
&=q_k^2\frac{ks-s^2+s}{k}=q_k^2\frac{s(k+1-s)}{k}>0.
\end{split}\ee
Therefore we can define
\be\label{eq:sp-projectors}
\rho_g:\R^k\longrightarrow a_g^\perp,
\qquad Q_g=\rho_g(\Delta^k),\qquad A_g=|a_g|,
\ee
where $\rho_g$ is an orthogonal projection.

\begin{lem}\label{lem:sp-projection}
The set $Q_g$ is a convex polytope in $a_g^\perp$, with the origin in its relative interior, and
\be\label{eq:sp-projection-current}
(\rho_g)_{\s}D_g=\chain{Q_g},\qquad
\H^{k-1}(Q_g)=A_g.
\ee
\end{lem}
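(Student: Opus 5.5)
The plan is to work with the facet chain $P_g$ rather than the radial chain $D_g$, and then transfer the result back through the relation $D_g=P_g+\partial U_g$ of \eqref{eq:sp-current-chains}. Since $\rho_g$ maps into the $(k-1)$-plane $a_g^\perp$, the pushforward $(\rho_g)_\s U_g$ is a $k$-dimensional $\FF$-current supported in a $(k-1)$-plane, hence zero. Therefore
\[
(\rho_g)_\s D_g=(\rho_g)_\s P_g+\partial(\rho_g)_\s U_g=(\rho_g)_\s P_g,
\]
and it suffices to show $(\rho_g)_\s P_g=\chain{Q_g}$ with $P_g=\sum_{i\in I_g}\chain{F_i}$. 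The set $Q_g=\rho_g(\Delta^k)$ is a convex polytope because it is a linear image of a convex polytope. The origin lies in its relative interior because $0\in\operatorname{int}\Delta^k$ and $\rho_g$ is an open map onto $a_g^\perp$.

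For the multiplicity count, fix $y\in a_g^\perp$ outside the $\H^{k-1}$-null set $\rho_g(\operatorname{skel}_{k-2}\Delta^k)$. The line $y+\R a_g$ misses $\Delta^k$ if $y\notin Q_g$. If $y$ lies in the interior of $Q_g$, then by convexity the line meets $\partial\Delta^k$ in exactly two points, each in the relative interior of a facet.
\begin{itemize}[leftmargin=*]
\item At the exit point, moving in the direction $+a_g$, the facet $F_i$ hit satisfies $w_i\cdot a_g>0$.
\item At the entry point, the facet $F_j$ hit satisfies $w_j\cdot a_g<0$.
\end{itemize}
By the sign computation \eqref{eq:sp-signs}, $w_i\cdot a_g>0$ holds exactly for $i\in I_g$ (recall $1\le s\le k$, so $k+1-s>0$ and $s>0$). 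The index $0$ never belongs to $I_g$ and always has negative sign. Hence exactly one of the two intersection points lies on a facet selected by $P_g$. It follows that the mod-two multiplicity of $(\rho_g)_\s P_g$ at $y$ is $1$ for almost every $y\in Q_g$ and $0$ outside $Q_g$. Here I would use the elementary fact that an affine injective map of a simplex pushes its current forward to the current of its image, and multiplicities add mod two. This gives $(\rho_g)_\s P_g=\chain{Q_g}$.

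For the volume identity, the same count shows that $\rho_g$ restricted to $\bigcup_{i\in I_g}F_i$ is injective off a null set, with image $Q_g$ up to a null set. The Jacobian of $\rho_g$ on $F_i$ is $|w_i\cdot a_g|/A_g$, so
\[
\H^{k-1}(Q_g)=\sum_{i\in I_g}q_k\frac{w_i\cdot a_g}{A_g}=\frac{a_g\cdot a_g}{A_g}=A_g,
\]
using $a_g=q_k\sum_{i\in I_g}w_i$. I expect the main obstacle to be the careful justification of the two-point intersection and the sign dichotomy; the key input is \eqref{eq:sp-signs}, which makes the "upper" facets exactly the selected ones. The remaining steps are routine measure-zero bookkeeping.
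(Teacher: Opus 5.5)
Your proposal is correct and follows essentially the same route as the paper. You reduce to the facet chain via $D_g=P_g+\partial U_g$ and $(\rho_g)_{\s}U_g=0$, use \eqref{eq:sp-signs} to show that the selected facets are exactly the ``upper'' facets in direction $a_g$, so almost every fibre line meets $\bigcup_{i\in I_g}F_i$ exactly once inside $Q_g$ (your exit/entry dichotomy makes this count explicit), and then get $\H^{k-1}(Q_g)=A_g$ from the area formula with Jacobian $w_i\cdot\widehat a_g$ on each selected facet.
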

\nd
The convexity and the polytope property follow directly from linear projection. Since $0\in\operatorname{int}\Delta^k$, a small ball centered at the origin lies in $\Delta^k$. Its projection is a ball in $a_g^\perp$, so the origin is a relative interior point of $Q_g$.

Set $\widehat a_g=a_g/A_g$. By \eqref{eq:sp-signs}, the facets $F_i,i\in I_g$ selected in $P_g$ are exactly the facets whose outward normals have positive scalar product with $\widehat a_g$, and the other facets $F_i,i\not\in I_g$ have negative scalar product. Therefore, the facets $F_i,i\in I_g$ are exactly the upper facets in direction $\widehat a_g$. 

Now we index the line parallel to $a_g$ that meets $\Delta^k$ by $z+\R\widehat a_g, z\in Q_g$. Then by convexity, for $z\in Q_g$, $z+\R\widehat a_g$ intersects $\Delta^k$ in a closed interval, and is nondegenerate if $z\not\in \pa Q_g$. Moreover, if $z\not\in\rho_g(\operatorname{skel}_{k-2}(\Delta^k))$, the upper endpoint lies in the relative interior of exactly one of  the facets $F_i,i\in I_g$. Note that the set $\rho_g(\operatorname{skel}_{k-2}(\Delta^k))$ is a finite union of polytopes of dimension at most $k-2$, hence is $\H^{k-1}$-null; $\pa Q_g$ is also $\H^{k-1}$-null.

As a result, the selected facets $F_i,i\in I_g$ of $P_g$ project onto $Q_g$ with exactly one covering almost everywhere and no contribution outside $Q_g$. This proves multiplicity one, not only coverage of the support:
\be\label{eq:sp-upperprojection}
(\rho_g)_{\s}P_g=\chain{Q_g}.
\ee
On the other hand, $U_g$ has dimension $k$, while $\rho_g$ has rank $k-1$. The area formula gives $(\rho_g)_{\s}U_g=0$. As a result, by \eqref{eq:sp-current-chains},
\be
(\rho_g)_{\s}D_g
=(\rho_g)_{\s}P_g+\partial((\rho_g)_{\s}U_g)
=(\rho_g)_{\s}P_g.
\ee
Together with \eqref{eq:sp-upperprojection}, we get the first identity of \eqref{eq:sp-projection-current}.

Finally, on each selected facet $F_i$, $i\in I_g$, the $(k-1)$-dimensional projection Jacobian is
$|w_i\cdot\widehat a_g|=w_i\cdot\widehat a_g$. Then by the almost-everywhere single coverage just proved, the area formula gives
\be
\H^{k-1}(Q_g)=\sum_{i\in I_g}\int_{F_i}w_i\cdot\widehat a_g d\H^{k-1}
=q_k\sum_{i\in I_g}w_i\cdot\widehat a_g
=a_g\cdot\widehat a_g=A_g.
\ee
\qed

\subsection{Symmetry of the volume vectors and the measure of $S_k\cap \Delta^k$}\label{subsec:simplex-frame}
Now we give the general-dimensional analogue of Lemma~\ref{lem:frame}. It gives the geometric $L^2$ factor in
the product estimate below.
\begin{lem}[Symmetry identity in arbitrary dimension]\label{lem:sp-frame}
Define
\be\label{eq:sp-alpha}
\a_k=q_k^2\,2^{k-2}\frac{k+1}{k}.
\ee
Then

$1^\circ$ 
\be\label{eq:sp-frame}
\sum_{g\in G_k^\times}a_ga_g^T=\a_k I_k,
\qquad
\sum_{g\in G_k^\times}A_g^2=k\a_k.
\ee

$2^\circ$ Let $\mathsf A_k$ be the matrix so that the columns of $\mathsf A_k$ are all the $a_g$, then
$\mathsf A_k\mathsf A_k^T=\a_kI_k$. In particular, for every $z\in\R^k$,
\be\label{eq:sp-l2}
\sum_{g\in G_k^\times}|a_g\cdot z|^2=\a_k|z|^2.
\ee
Consequently the operator
$z\mapsto(a_g\cdot z)_g=\mathsf A_k^Tz$ has $L^2$ amplification exactly $\sqrt{\a_k}$, and
$\|\mathsf A_k\|_{\op}=\sqrt{\a_k}$.

\end{lem}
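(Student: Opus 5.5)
We have to show that $\sum_{g\in G_k^\times}a_ga_g^T=\a_kI_k$. The plan is to expand each $a_g$ in the facet normals and then sum over all $g$ by counting.

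\textbf{Step 1: reduce to a sum over the $w_i$.} Since $a_g=q_k\sum_{i=1}^k g_iw_i$, we have
\[
\sum_{g\in G_k^\times}a_ga_g^T=q_k^2\sum_{i,j=1}^k N_{ij}\,w_iw_j^T,
\qquad N_{ij}=\#\{g\in G_k:g_i=g_j=1\}.
\]
The sum may run over all of $G_k$, because $g=0$ contributes nothing. Counting gives $N_{ii}=2^{k-1}$ and $N_{ij}=2^{k-2}$ for $i\ne j$. Hence
\[
\sum_{g}a_ga_g^T=q_k^2\,2^{k-2}\Bigl(\sum_{i=1}^k w_iw_i^T+\bigl(\textstyle\sum_{i=1}^k w_i\bigr)\bigl(\sum_{j=1}^k w_j\bigr)^T\Bigr).
\]

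\textbf{Step 2: use the simplex relations \eqref{eq:sp-wframe}.} Since $\sum_{i=0}^k w_i=0$, we have $\sum_{i=1}^k w_i=-w_0$. The bracket therefore becomes $\sum_{i=0}^k w_iw_i^T$, which equals $\frac{k+1}{k}I_k$ by \eqref{eq:sp-wframe}. This gives $\sum_g a_ga_g^T=q_k^2\,2^{k-2}\frac{k+1}{k}I_k=\a_kI_k$.

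If one prefers to verify the frame identity $\sum_{i=0}^k w_iw_i^T=\frac{k+1}{k}I_k$ directly, test the quadratic form on each $w_j$. One gets
\[
\sum_{i=0}^k(w_i\cdot w_j)^2=1+\frac{k}{k^2}=\frac{k+1}{k},
\]
and the $w_j$ span $\R^k$. Since the left side is a symmetric matrix, this argument also needs the polarized version: the bilinear form must be checked on pairs $w_j,w_l$ as well. Doing so gives
\[
-\frac2k+\frac{k-1}{k^2}=-\frac{k+1}{k^2}=\frac{k+1}{k}\,w_j\cdot w_l.
\]

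\textbf{Step 3: derive the remaining statements.} Taking the trace gives $\sum_g A_g^2=k\a_k$. Part $2^\circ$ is a restatement, since $\mathsf A_k\mathsf A_k^T=\sum_g a_ga_g^T$. Evaluating the quadratic form on $z$ gives \eqref{eq:sp-l2}. The operator norm of $\mathsf A_k$ equals the square root of the largest eigenvalue of $\mathsf A_k\mathsf A_k^T=\a_kI_k$, namely $\sqrt{\a_k}$.

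The only point needing care is the counting of $N_{ij}$ together with the rewriting $\sum_{i\ge1}w_i=-w_0$. This step is what turns a sum that is not symmetric in the $k+1$ facets into the symmetric frame sum.
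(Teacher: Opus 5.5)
Your proposal is correct and follows the paper's proof essentially line by line. Your count $N_{ii}=2^{k-1}$, $N_{ij}=2^{k-2}$ is the paper's identity $\sum_g gg^T=2^{k-2}(I_k+\mathbf 1\mathbf 1^T)$, after which you use the same substitution $\sum_{i\ge 1}w_i=-w_0$, the frame relation \eqref{eq:sp-wframe}, and the trace and eigenvalue arguments; your polarized check of $\sum_{i=0}^k w_iw_i^T=\frac{k+1}{k}I_k$ additionally verifies a relation the paper states without proof.
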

\nd $1^\circ$
We temporarily regard each $g$ as a real 0-1 column vector. Then among the $2^k$ such vectors $g\in G_k$, a given coordinate equals one $2^{k-1}$ times, and two distinct coordinates are simultaneously one $2^{k-2}$ times. The zero vector contributes nothing. Hence
\be\label{eq:sp-bit-count}
\sum_{g\in G_k^\times}gg^T=2^{k-1}I_k+2^{k-2}
\begin{pmatrix}
0 & 1 & 1 & \cdots & 1\\
1 & 0 & 1 & \cdots & 1\\
1 & 1 & 0 & \cdots & 1\\
\vdots & \vdots & \vdots & \ddots & \vdots\\
1 & 1 & 1 & \cdots & 0
\end{pmatrix}
=2^{k-2}(I_k+\mathbf 1\mathbf1^T),
\ee
where $\mathbf1$ is the vector in $\R^k$ so that every coordinate equals 1.

Let $W$ be the real $k\times k$ matrix with columns $w_1,\ldots,w_k$. Then $a_g=q_kWg$.
Since $\sum_{i=1}^k w_i=-w_0$, we have, by \eqref{eq:sp-wframe},
\begin{align*}
\sum_{g\in G_k^\times}a_ga_g^T
&=q_k^2W\left(\sum_{g\in G_k^\times}gg^T\right)W^T
=q_k^2\,2^{k-2}
W(I_k+\mathbf1\mathbf1^T)W^T\\
&=q_k^2\,2^{k-2}\left[\sum_{i=1}^k w_iw_i^T+\left(\sum_{i=1}^k w_i\right)
\left(\sum_{i=1}^k w_i\right)^T\right]\\
&=q_k^2\,2^{k-2}[\frac{k+1}{k}I_k-w_0w_0^T+w_0w_0^T]\\
&=q_k^2\,2^{k-2}\frac{k+1}{k}I_k=\a_k I_k.
\end{align*}
This proves the first identity of \eqref{eq:sp-frame}. Then take the traces and note that $A_g=|a_g|$, we get directly the  second identity of \eqref{eq:sp-frame}.

$2^\circ$ Directly from $1^\circ$ we have $\mathsf A_k\mathsf A_k^T=\a_kI_k$. Note that for every $z\in \R^k$, $a_g\cdot z=z^Ta_g$, thus $z^T\mathsf A_k=(z^Ta_g)_{g\in G_k^\times}$, and therefore 
\be\begin{split} \sum_{g\in G_k^\times}|a_g\cdot z|^2&=z^T\mathsf A_k(z^T\mathsf A_k)^T=(z^Ta_g)_{g\in G_k^\times}[(z^Ta_g)_{g\in G_k^\times}]^T\\
&=z^T\mathsf A_k\mathsf A_k^Tz=z^T\a_kI_k z=\a_k|z|^2
\end{split}\ee
\qed

\begin{lem}\label{lem:sp-model-volume}
Let $M_k=\H^{k-1}(S_k\cap\Delta^k)$. Then
\be\label{eq:sp-q}
q_k=\H^{k-1}(F_i)=\frac{\sqrt{k}}{(k-1)!}
\left(\frac{k+1}{k}\right)^{(k-1)/2},
\ee
\be\label{eq:sp-sector-volume}
\H^{k-1}(K_{ij})
=\frac1{(k-1)!}
\sqrt{\frac2k\left(\frac{k+1}{k}\right)^{k-2}},
\ee
and
\be\label{eq:sp-model-volume}
M_k=q_k\sqrt{\frac{k(k+1)}2}
=\frac{k\sqrt{\a_k}}{2^{(k-1)/2}}.
\ee
\end{lem}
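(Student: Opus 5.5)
The plan is to compute the three quantities in \eqref{eq:sp-q}--\eqref{eq:sp-model-volume} directly from Gram determinants. We use only $|v_i|=1$ and $v_i\cdot v_j=-1/k$ from \eqref{eq:sp-simplex}, together with the fact that the $K_{ij}$ overlap in $\H^{k-1}$-null sets (see \eqref{eq:sp-sectors}).

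\emph{Facet volume $q_k$.} Fix $F_i$ and choose one of its vertices $v_l$. The $k-1$ edge vectors $u_j=v_j-v_l$, for $j\notin\{i,l\}$, satisfy
\[
|u_j|^2=2\Bigl(1+\tfrac1k\Bigr),\qquad u_j\cdot u_{j'}=1+\tfrac1k\quad(j\ne j').
\]
Hence the Gram matrix is $(1+\frac1k)(I_{k-1}+\mathbf1\mathbf1^T)$, whose determinant is $(1+\frac1k)^{k-1}k$. The volume of a $(k-1)$-simplex is $\frac{1}{(k-1)!}\sqrt{\det\mathrm{Gram}}$, which gives \eqref{eq:sp-q} immediately.

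\emph{Sector volume.} The same computation applied to the $(k-2)$-face $F_i\cap F_j$ gives
\[
\H^{k-2}(F_i\cap F_j)=\frac{1}{(k-2)!}\sqrt{(k-1)\Bigl(\tfrac{k+1}{k}\Bigr)^{k-2}}.
\]
The simplex $K_{ij}$ is the cone from $0$ over this face, so $\H^{k-1}(K_{ij})=\frac1{k-1}\,h\,\H^{k-2}(F_i\cap F_j)$, where $h$ is the distance from $0$ to the affine span of $F_i\cap F_j$ inside $\Span(K_{ij})$.

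The one point that needs a short justification is that the foot of this perpendicular is the barycenter $b=-(v_i+v_j)/(k-1)$ of the face. This holds by symmetry: $b\cdot(v_r-v_s)=0$ for all vertices $v_r,v_s$ of the face, by the inner-product relations. Then
\[
h^2=|b|^2=\frac{2-2/k}{(k-1)^2}=\frac{2}{k(k-1)},
\]
and multiplying out gives \eqref{eq:sp-sector-volume}.

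\emph{Total measure $M_k$.} Since the $\binom{k+1}2$ sectors $K_{ij}$ are essentially disjoint, $M_k=\binom{k+1}{2}\H^{k-1}(K_{ij})$. Dividing by \eqref{eq:sp-q}, the factorials and most powers of $(k+1)/k$ cancel:
\[
\frac{M_k}{q_k}=\frac{k(k+1)}2\cdot\sqrt{\frac2k}\cdot\frac{1}{\sqrt k\sqrt{(k+1)/k}}=\sqrt{\frac{k(k+1)}2}.
\]
For the last expression in \eqref{eq:sp-model-volume}, substitute $\sqrt{\a_k}=q_k2^{(k-2)/2}\sqrt{(k+1)/k}$ from \eqref{eq:sp-alpha}. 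Then $k\sqrt{\a_k}/2^{(k-1)/2}=q_k\,k\sqrt{(k+1)/k}/\sqrt2=q_k\sqrt{k(k+1)/2}$.

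No step is a real obstacle. The only places needing care are the barycenter/height argument and the bookkeeping of exponents of $(k+1)/k$. As sanity checks, $k=3$ gives $q_3=2/\sqrt3$, $\H^2(K_{ij})=\sqrt2/3$ and $M_3=2\sqrt2$, in agreement with Lemma~\ref{lem:geometry}.
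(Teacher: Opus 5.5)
Your proof is correct and follows the paper's route: the edge-vector Gram matrix $\frac{k+1}{k}(I_{k-1}+\mathbf 1\mathbf 1^T)$ gives $q_k$, the $\binom{k+1}{2}$ congruent, essentially disjoint sectors give $M_k$, and the definition of $\a_k$ is substituted at the end. The one local difference is the sector volume. The paper takes the Gram determinant of the $k-1$ vectors $v_l$, $l\notin\{i,j\}$, directly from the origin; this is the matrix $\frac{k+1}{k}I_{k-1}-\frac1k\mathbf 1\mathbf 1^T$, with eigenvalue $2/k$ once and $(k+1)/k$ with multiplicity $k-2$. You instead use base times height over $F_i\cap F_j$, with the barycenter as the foot of the perpendicular; this is equally valid and only one step longer.
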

\nd
A facet $F_i$ has $k$ vertices. Fix one as a base point. Then the length of the edge vectors to the other $k-1$ vertices is $\sqrt\frac{2(k+1)}{k}$, and the angles between these vectors are $\pi/3$. Hence the Gram matrix of the edge vectors to the other $k-1$ vertices is
\be
\frac{k+1}{k}
\bigl(I_{k-1}+\mathbf1\mathbf1^T\bigr).
\ee
Its determinant is $k((k+1)/k)^{k-1}$, because
$I_{k-1}+\mathbf1\mathbf1^T$ has eigenvalue $k$ once and eigenvalue one with multiplicity $k-2$. Since the simplex volume is the square root of the Gram determinant divided by $(k-1)!$, we have proved \eqref{eq:sp-q}.

The nonzero vertices of $K_{ij}$ are the $k-1$ vectors $v_l$ with $l\notin\{i,j\}$. With the origin as base point, by \eqref{eq:sp-simplex}, their Gram matrix is
\be
\frac{k+1}{k}I_{k-1}-\frac1k\mathbf1\mathbf1^T.
\ee
It has eigenvalue $2/k$ on the all-ones direction and eigenvalue $(k+1)/k$ on its orthogonal complement, of dimension $k-2$. Then the same volume formula gives \eqref{eq:sp-sector-volume}. 

By \eqref{eq:sp-sectors}, there are $\binom{k+1}{2}$ sectors $K_{ij}$ of equal volume and pairwise $\H^{k-1}$-null intersections. Thus
\be
M_k=\binom{k+1}{2}\H^{k-1}(K_{01}).
\ee
Substituting the first two formulas and canceling their common factors gives
$M_k=q_k\sqrt{k(k+1)/2}$. Substitution of \eqref{eq:sp-alpha} gives the second identity of \eqref{eq:sp-model-volume}.
\qed

\subsection{Product coefficients and incidence matrices}\label{subsec:simplex-incidence}

After the above preparation in one factor, now we turn to product spaces. 

We apply the preceding construction separately in $X\simeq\R^m$ and $X'\simeq\R^n$. Use the notation $D_g,a_g,A_g,Q_g,\rho_g$ in the first factor and
$D'_h,a'_h,A'_h,Q'_h,\rho'_h$ in the second, with $g\in G_m^\times$, $h\in G_n^\times$. Set
\be\label{eq:sp-product-domain}
\mathcal D_{m,n}=\Delta^m\times\Delta^n,\qquad
E^0_{m,n}=E_{m,n}\cap\mathcal D_{m,n},\qquad d=m+n-2.
\ee
All products in this section are taken in $X\times X'$.

The coefficient space $G_m\otimes_\FF G_n$ has dimension $mn$, with basis $e_r\otimes e_s$. The associated basic geometric currents are
$D_{e_r}\times D'_{e_s}$, of dimension $(m-1)+(n-1)=d$. Then
\be\label{eq:sp-product-linear}
D_g\times D'_h
=\sum_{r=1}^m\sum_{s=1}^n g_rh_s
\bigl(D_{e_r}\times D'_{e_s}\bigr).
\ee
Here $mn$ is the dimension of the coefficient space, not the geometric dimension $d$ of the set.

Now let $F\subset X\times X'$ be a $d$-rectifiable set with finite measure, and let $\Gamma_{rs},1\le r\le m,1\le s\le n$ be $mn$ compactly supported $\FF$-integral currents whose essential carriers lie in $F$.  

As in the tetrahedral case, the local overlap pattern will be encoded by a $\FF$ multiplicity matrix
\be\label{eq:sp-C}
C(x)=(\theta_{rs}(x))\in M_{m,n}(\FF).
\ee
Note that the matrix $C(x)$ is no longer a square one when $m\ne n$.

In the chosen basis, $C(x)$ may also be regarded as the coefficient tensor $\sum_{r,s}\theta_{rs}(x)e_r\otimes e_s\in G_m\otimes_\FF G_n$. The current
$\Gamma_{g,h}=\sum_{r,s}g_rh_s\Gamma_{rs}$ has multiplicity
\be\label{eq:sp-parity}
\theta_{g,h}(x)=\sum_{r,s}g_rh_s\theta_{rs}(x)
=g^TC(x)h\quad\hbox{in }\FF.
\ee
Thus at a point $x$, the current $\Gamma_{g,h}$ is present with multiplicity one exactly when this sum equals 1 in $\FF$. Overlaps of different preimages, folding, and even cancellation are already reflected in the $\theta_{rs}$; no injectivity hypothesis is needed.

For any $C\in M_{m,n}(\FF)$, define the real incidence matrix
\be\label{eq:sp-incidence}
\mathsf B_C(g,h)=\mathbf1_{\{g^TCh=1\}},
\qquad g\in G_m^\times,\ h\in G_n^\times.
\ee
Its size is $(2^m-1)\times(2^n-1)$. 

\begin{lem}\label{lem:sp-incidence}

$1^\circ$
Every nonzero row of $\mathsf B_C$ contains $2^{n-1}$ ones, and every nonzero column contains $2^{m-1}$ ones.

$2^\circ$ If a real matrix $\mathsf S$ satisfies $|\mathsf S_{g,h}|=\mathsf B_C(g,h)$ for every $g\in G_m^\times$ and $h\in G_n^\times$, then
\be\label{eq:sp-signnorm}
\|\mathsf S\|_{\op}\leq2^{(m+n-2)/2}.
\ee
\end{lem}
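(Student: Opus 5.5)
We argue exactly as in Lemmas~\ref{lem:incidence} and~\ref{lem:signnorm}, replacing the tetrahedral counts by general ones.

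For $1^\circ$, fix $g\in G_m^\times$. The $g$-row of $\mathsf B_C$ is the indicator, restricted to $G_n^\times$, of the set where the $\FF$-linear functional $h\mapsto (C^Tg)\cdot h$ on $G_n=\FF^n$ equals one. If $C^Tg=0$ the row vanishes. Otherwise the functional is surjective onto $\FF$, so its kernel is a hyperplane with $2^{n-1}$ elements. The complementary affine coset where it equals one also has $2^{n-1}$ elements. This coset does not contain $0$, so all of its elements lie in $G_n^\times$, and the row contains exactly $2^{n-1}$ ones. The column statement is symmetric: fix $h\in G_n^\times$, use the functional $g\mapsto g\cdot (Ch)$ on $G_m$, and obtain $2^{m-1}$ ones in every nonzero column.

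For $2^\circ$, we use the Schur test in its Cauchy--Schwarz form, as in the proof of Lemma~\ref{lem:signnorm}. Take $z\in\R^{G_n^\times}$. In row $g$, at most $2^{n-1}$ entries $\mathsf S_{g,h}$ are nonzero, and each has absolute value $\mathsf B_C(g,h)\in\{0,1\}$. Cauchy--Schwarz over those entries therefore gives
\[
\Bigl|\sum_h \mathsf S_{g,h}z_h\Bigr|^2\le 2^{n-1}\sum_h \mathsf B_C(g,h)|z_h|^2 .
\]
Summing over $g$ and exchanging the order of summation, each $|z_h|^2$ is multiplied by the column sum $\sum_g\mathsf B_C(g,h)\le 2^{m-1}$. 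Hence
\[
\|\mathsf Sz\|_2^2\le 2^{n-1}2^{m-1}\|z\|_2^2=2^{m+n-2}\|z\|_2^2,
\]
which is \eqref{eq:sp-signnorm}.

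There is no real obstacle here. The only point needing care is that the count in $1^\circ$ must be exact: every element where the functional equals one is nonzero, so restricting from $G_n$ to $G_n^\times$ loses nothing. This is precisely why the complete family $G^\times$ is used. If sharpness were wanted as in Lemma~\ref{lem:signnorm}, one could note that $\mathsf B_C\mathbf 1_J=2^{n-1}\mathbf 1_I$ on the active labels. This does not directly give norm equality when $m\ne n$, and it is not needed for the statement.
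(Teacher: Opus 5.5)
Your proof is correct and matches the paper's argument: the hyperplane/affine-coset count for $1^\circ$, and row-by-row Cauchy--Schwarz followed by column sums for $2^\circ$. Writing $\sum_g\mathsf B_C(g,h)\le 2^{m-1}$ is in fact slightly more careful than the paper's equality, which fails for zero columns. One small correction to your closing aside: double counting the ones gives $|I|\,2^{n-1}=|J|\,2^{m-1}$, so $\|\mathsf B_C\mathbf 1_J\|_2^2/\|\mathbf 1_J\|_2^2=2^{2(n-1)}|I|/|J|=2^{m+n-2}$, and $\mathbf 1_J$ does attain the bound even when $m\ne n$.
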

\nd $1^\circ$
For each fixed $g\ne0$, the map $h\mapsto g^TCh$ is a linear functional on $\FF^n$. It either vanishes identically or has a kernel of size $2^{n-1}$, with the same number of elements in its coset of value one. Zero is not in that coset, so all of its $2^{n-1}$ elements belong to $G_n^\times$. 
Similar for the column count.

$2^\circ$ For any real column vector $z=(z_h)$, we apply Cauchy--Schwarz row by row and get
\be\begin{split}
\|\mathsf Sz\|_2^2
&=\sum_g\left|\sum_{h:\mathsf B_C(g,h)=1}
\mathsf S_{g,h}z_h\right|^2\leq 2^{n-1}\sum_g\sum_{h:\mathsf B_C(g,h)=1}|z_h|^2\\
&=2^{n-1}\sum_h |z_h|^2\sum_{g:\mathsf B_C(g,h)=1}
= 2^{n-1}2^{m-1}\sum_h|z_h|^2=2^{m+n-2}|z|^2.
\end{split}\ee
This gives \eqref{eq:sp-signnorm}. \qed

\subsection{Pointwise estimate in general dimensions}\label{subsec:simplex-estimate}
Let $\mathcal U_2(X\times X')$ be the set of unit simple 2-vectors in $X\times X'$. For $C\in M_{m,n}(\FF)$ and $\eta\in\mathcal U_2(X\times X')$, define
\be\label{eq:sp-Phi}
\Phi_{m,n}(C,\eta)
=\sum_{\substack{g\in G_m^\times,\ h\in G_n^\times\\g^TCh=1}}
\left|\pair{\eta}{a_g\wedge a'_h}\right|.
\ee
We first restate several linear-algebra facts of Section~\ref{sec:estimate} in general dimensions.

\begin{lem}\label{lem:sp-mixed}

$1^\circ$
For a real $m\times n$ matrix $K$, the two-form
$\omega_K=\sum_{i=1}^m\sum_{j=1}^nK_{ij}\,dx_i\wedge dx'_j$ satisfies
\be\label{eq:sp-comass}
\|\omega_K\|_*=\|K\|_{\op}.
\ee

$2^\circ$
If $L\subset X\times X'$ is a $d$-plane, $\eta$ is a unit simple normal 2-vector to $L$, and
$\Pi_{g,h}=\rho_g\times\rho'_h$, then
\be\label{eq:sp-jacobian}
A_gA'_hJ_d(\Pi_{g,h}|_L)
=\left|\pair{\eta}{a_g\wedge a'_h}\right|.
\ee
\end{lem}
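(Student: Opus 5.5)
The plan is to redo the arguments of Lemmas~\ref{lem:comass} and~\ref{lem:jacobian}; in both, the dimensions of the factors and of the plane enter only through orthogonal decompositions.

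For $1^\circ$, write $u=(x,x')$ and $v=(y,y')$ with $x,y\in X$ and $x',y'\in X'$. Then
\[
\omega_K(u,v)=x^TKy'-y^TKx'=u^T\begin{pmatrix}0&K\\-K^T&0\end{pmatrix}v .
\]
The block matrix is an $(m+n)\times(m+n)$ operator. Its transpose times itself is block diagonal, with blocks $KK^T$ and $K^TK$, so its operator norm equals $\|K\|_{\op}$. For orthonormal $u,v$ this gives $|\omega_K(u,v)|\le\|K\|_{\op}$. Since the comass is the supremum over orthonormal pairs, we get $\|\omega_K\|_*\le\|K\|_{\op}$.

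For the reverse inequality, take unit singular vectors $n\in X$ and $m'\in X'$ with $n^TKm'=\|K\|_{\op}$. The pair $(n,0)$, $(0,m')$ is orthonormal, since it lives in orthogonal factors. Evaluating $\omega_K$ on this pair gives exactly $\|K\|_{\op}$. This step is just as in the square case, so I expect no difficulty.

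For $2^\circ$, first note that $\Pi_{g,h}$ is the orthogonal projection of $X\times X'$ onto the $d$-plane $P=a_g^\perp\times{a'_h}^\perp$. This plane has dimension $(m-1)+(n-1)=d$. Its orthogonal complement is spanned by $(\widehat a_g,0)$ and $(0,\widehat a'_h)$, which are orthonormal because they lie in orthogonal factors.

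Now take unit simple $d$-vectors: $\xi$ orienting $L$ and $\zeta$ orienting $P$. The Jacobian of the orthogonal projection of $L$ onto $P$ is the cosine of the angle between these planes, so $J_d(\Pi_{g,h}|_L)=|\pair{\xi}{\zeta}|$.

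Next apply the Hodge star on $\Lambda_d(\R^{m+n})$, which maps into $\Lambda_2$ and is an isometry. It sends $\xi$ to $\pm\eta$, because $\eta$ is a unit simple 2-vector normal to $L$. It sends $\zeta$ to $\pm\widehat a_g\wedge\widehat a'_h$. Therefore
\[
J_d(\Pi_{g,h}|_L)=\bigl|\pair{\eta}{\widehat a_g\wedge\widehat a'_h}\bigr| .
\]
Multiplying both sides by $A_gA'_h$ gives the formula in $2^\circ$.

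The main point needing care is the identity $J_d=|\pair\xi\zeta|$ for the projection restricted to a plane of the same dimension. One way to see it is to choose an orthonormal basis of $L$. Then $J_d$ is the absolute value of the determinant of the projected basis expressed in an orthonormal basis of $P$, and this determinant equals $\pair{\xi}{\zeta}$. Compatibility of the Hodge star with inner products up to sign is standard. Everything else goes through verbatim from Section~\ref{sec:estimate}.
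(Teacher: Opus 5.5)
Your proposal is correct and follows the paper's proof essentially step for step. For $1^\circ$ you use the antisymmetric block matrix with $M^TM=\operatorname{diag}(KK^T,K^TK)$, together with the orthonormal singular-vector pair $(n,0),(0,m')$; for $2^\circ$ you use $J_d=|\langle\xi,\zeta\rangle|$ and the isometry of the Hodge star sending $\xi\mapsto\pm\eta$ and $\zeta\mapsto\pm\widehat a_g\wedge\widehat a'_h$, which is exactly the paper's argument (your determinant justification of $J_d=|\langle\xi,\zeta\rangle|$ just makes explicit a step the paper leaves implicit).
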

\nd $1^\circ$
As in Lemma~\ref{lem:comass}, for $u=(x,x')$ and $v=(y,y')$,
\be
\omega_K(u,v)=x^TKy'-y^TKx'
=u^T\begin{pmatrix}0&K\\-K^T&0\end{pmatrix}v.
\ee
The transpose of the above block matrix times the matrix itself is $\operatorname{diag}(KK^T,K^TK)$, so $\|\omega_K\|_*\leq\|K\|_{\op}$. Conversely choose unit left and right singular vectors $x\in X,x'\in X'$. Then the orthonormal pair $(x,0),(0,x')$ attains the equality.

$2^\circ$ The target $d$-plane of $\Pi_{g,h}$ is $a_g^\perp\times {a'_h}^\perp$, with orthogonal complement spanned by the orthonormal vectors $a_g/A_g,a'_h/A'_h$. As in the proof of Lemma \ref{lem:jacobian}, we apply the orthogonal projection Jacobian formula to unit simple $d$-vectors of the source and target, and get
\be
J_d(\Pi_{g,h}|_L)
=\left|\pair{\eta}{(a_g/A_g)\wedge(a'_h/A'_h)}\right|.
\ee
Multiply by $A_gA'_h$ we get \eqref{eq:sp-jacobian}.
\qed

\begin{thm}[Pointwise product estimate]\label{thm:sp-pointwise}
For every $C\in M_{m,n}(\FF)$ and $\eta\in\mathcal U_2(X\times X')$,
\be\label{eq:sp-key}
\Phi_{m,n}(C,\eta)\leq\Lambda_{m,n}
\ee
where $\Lambda_{m,n}=2^{(m+n-2)/2}\sqrt{\a_m\a_n}$.
The constant is optimal.
\end{thm}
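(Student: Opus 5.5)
The plan is to copy the proof of Theorem~\ref{thm:key}, replacing its three ingredients by their general-dimensional versions. Fix $C\in M_{m,n}(\FF)$ and $\eta\in\mathcal U_2(X\times X')$. For each pair $(g,h)$ with $\mathsf B_C(g,h)=1$, choose a sign $\eps_{g,h}\in\{\pm1\}$ such that $\eps_{g,h}\pair{\eta}{a_g\wedge a'_h}=|\pair{\eta}{a_g\wedge a'_h}|$; if the pairing is zero, either sign will do. Set $\mathsf S_{g,h}=\eps_{g,h}\mathsf B_C(g,h)$ and define
\[
\omega=\sum_{g,h}\mathsf S_{g,h}\,a_g^\flat\wedge{a'_h}^\flat .
\]
Then $\Phi_{m,n}(C,\eta)=\omega(\eta)\le\|\omega\|_*$. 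The coefficient matrix of $\omega$ in the basis $dx_i\wedge dx'_j$ is $\mathsf A_m\mathsf S\mathsf A_n'^T$, where $\mathsf A_m$ and $\mathsf A'_n$ are the matrices of volume vectors from Lemma~\ref{lem:sp-frame}. By Lemma~\ref{lem:sp-mixed}~$1^\circ$,
\[
\|\omega\|_*=\|\mathsf A_m\mathsf S\mathsf A_n'^T\|_{\op}
\le\|\mathsf A_m\|_{\op}\,\|\mathsf S\|_{\op}\,\|\mathsf A'_n\|_{\op}.
\]
The outer factors equal $\sqrt{\a_m}$ and $\sqrt{\a_n}$ by Lemma~\ref{lem:sp-frame}~$2^\circ$, and the middle factor is at most $2^{(m+n-2)/2}$ by Lemma~\ref{lem:sp-incidence}~$2^\circ$. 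This gives $\Phi_{m,n}\le\Lambda_{m,n}$, and the upper bound is routine.

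The real work is optimality, where every inequality in the chain must become an equality simultaneously. I would try a product configuration $C=\a\beta^T$ with $\a\in G_m^\times$, $\beta\in G_n^\times$, and $\eta=(u,0)\wedge(0,u')$ for unit vectors $u\in X$, $u'\in X'$. Then $\pair{\eta}{a_g\wedge a'_h}=(a_g\cdot u)(a'_h\cdot u')$ up to sign, and $g^TCh=(g\cdot\a)(\beta\cdot h)$. Hence
\[
\Phi_{m,n}=\Bigl(\sum_{g\cdot\a=1}|a_g\cdot u|\Bigr)\Bigl(\sum_{h\cdot\beta=1}|a'_h\cdot u'|\Bigr),
\]
and the problem splits into the two factors. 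In each factor I need the single-factor bound $\sum_{g\cdot\a=1}|a_g\cdot u|\le 2^{(k-1)/2}\sqrt{\a_k}$, which follows from Cauchy--Schwarz over the $2^{k-1}$ active labels together with \eqref{eq:sp-l2}. Equality holds exactly when $a_g\cdot u=0$ for every inactive $g$ and $|a_g\cdot u|$ is constant on the active ones.

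So in each factor I would look for an $\a$ and a unit $u$ with $u\perp a_g$ for all $g$ with $g\cdot\a=0$. This is where I expect the main obstacle. For $k=3$ the paper uses $\a=011$ and $u\propto(0,1,-1)$ in Euclidean coordinates. The analogue to try is the complete $\FF$-analogue of a ``two opposite facet pair'' direction: $u\propto w_i-w_j$ with $\a=\d_{ij}$, which in general is $u\propto w_i-w_j$ with $\a=c_i+c_j$. Then
\[
a_g\cdot(w_i-w_j)=q_k\,\frac{k+1}{k}\,(g\cdot c_i-g\cdot c_j),
\]
using $w_l\cdot(w_i-w_j)=0$ for $l\ne i,j$ (a single formula covering $l=0$, since $c_0=0$). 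This vanishes precisely when $g\cdot\d_{ij}=0$ and has constant absolute value otherwise, so equality holds in each factor. The remaining step is to check that the product $(u,0)\wedge(0,u')$ is a unit simple 2-vector, which holds because the two vectors are orthogonal unit vectors. Equality in Cauchy--Schwarz then gives exactly $\Lambda_{m,n}$. If the choice $u\propto w_i-w_j$ fails in some degenerate case (for instance $k=2$), I would instead fall back on the explicit check that the active set is a coset of a hyperplane.
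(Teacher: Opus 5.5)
Your proposal is correct and follows the paper's proof essentially step for step: the upper bound uses the same signed-incidence factorization $\mathsf A\mathsf S\mathsf B^T$ with Lemmas~\ref{lem:sp-mixed}, \ref{lem:sp-frame} and \ref{lem:sp-incidence}, and optimality uses the same rank-one $C=\d_{ij}(\d'_{ab})^T$ with $\eta$ built from the normals $w_i-w_j$. (The paper takes $i=j-1=0$, where $\d_{01}=e_1$, and computes each active term $\tau_m\tau_n$ explicitly, whereas you read the value off the equality case of Cauchy--Schwarz together with the frame identity, which is equivalent.) There is no degenerate case to worry about, since $|w_i-w_j|^2=2(k+1)/k>0$ for every $k\geq2$, including $k=2$.
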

\nd Fix any $\eta$. For each $g,h$, choose signs $\eps_{g,h}\in \{-1,1\}$ so that
$\eps_{g,h}\pair{\eta}{a_g\wedge a'_h}
=|\pair{\eta}{a_g\wedge a'_h}|$, and set
$\mathsf S_{g,h}=\eps_{g,h}\mathsf B_C(g,h)$. Let $\mathsf A,\mathsf B$ be matrices with columns $a_g,a'_h$, respectively, and thus they are of sizes
$m\times(2^m-1)$ and $n\times(2^n-1)$.
Lemma~\ref{lem:sp-frame} gives
\be
\mathsf A\mathsf A^T=\a_mI_m,\qquad
\mathsf B\mathsf B^T=\a_nI_n.
\ee
The coefficient matrix of $\omega=\sum_{g,h}\mathsf S_{g,h}a_g^\flat\wedge {a'_h}^\flat$ is $\mathsf A\mathsf S\mathsf B^T$, so Lemmas~\ref{lem:sp-mixed} and~\ref{lem:sp-incidence} yield
\begin{align}\label{eq:sp-keychain}
\Phi_{m,n}(C,\eta)=\omega(\eta)
&\leq\|\omega\|_*
=\|\mathsf A\mathsf S\mathsf B^T\|_{\op}\notag\\
&\leq\sqrt{\a_m}\,2^{(m+n-2)/2}\sqrt{\a_n}
=\Lambda_{m,n}.
\end{align}

To prove that the constant is optimal, we consider the product face
$K_{01}\times K'_{01}$. Since
$\d_{01}=e_1\in G_m$ and
$\d'_{01}=e'_1\in G_n$, set
\be
C=e_1(e'_1)^T\in M_{m,n}(\FF).
\ee
In $\R^m$, let
\be
\nu_{01}
=\frac{w_0-w_1}{\sqrt{2(m+1)/m}},
\qquad
\tau_m=q_m\sqrt{\frac{m+1}{2m}}
=\sqrt{\frac{\a_m}{2^{m-1}}}.
\ee
Then $\nu_{01}$ is a unit normal to $K_{01}$, and
\be
a_g\cdot\nu_{01}=\frac{q_m}{\sqrt{2(m+1)/m}}\sum_{r=1}^m g_rw_r\cdot(w_0-w_1).
\ee
Note that by \eqref{eq:sp-wframe}, when $r\ne 1$, $w_r\cdot(w_0-w_1)=(-\frac 1m)-(-\frac 1m)=0$, hence 
\be\label{K01calibration} a_g\cdot\nu_{01}=\frac{q_m}{\sqrt{2(m+1)/m}} g_1w_1\cdot(w_0-w_1)=\frac{q_m}{\sqrt{2(m+1)/m}} g_1(-\frac{m+1}{m})=-\tau_m g_1.\ee
Similarly for $\R^n$,
\be\label{K01calibration2}
a'_h\cdot\nu'_{01}=-\tau_n h_1.
\ee
Now choose $\eta=(\nu_{01},0)\wedge(0,\nu'_{01})$. Note that by definition of $C$, 
\be
\mathsf B_C(g,h)=1
\quad\Longleftrightarrow\quad
g_1=h_1=1.
\ee
There are $2^{m-1}2^{n-1}$ such pairs, and each contributes
$\tau_m\tau_n$ after \eqref{K01calibration} and \eqref{K01calibration2}. Hence
\[
\Phi_{m,n}(C,\eta)
=2^{m+n-2}\tau_m\tau_n
=2^{(m+n-2)/2}\sqrt{\a_m\a_n}
=\Lambda_{m,n}.
\]
Thus the constant is optimal.

By symmetry of the regular simplices, the same equality
holds at every product face $K_{ij}\times K'_{ab}$.
\qed

\subsection{Minimality in the spanning competitor class}\label{subsec:simplex-boundary}

We now return from the pointwise estimate to the global
comparison. As before, the only information that will be kept
from the competitor is the family of prescribed boundary
currents. Hence, as before, we first prove the minimality of $E_{m,n}^0$ in the following larger class.
\begin{defn}[The spanning competitor class]\label{def:sp-currentclass}
Let $\mathscr C_{m,n}(\mathcal D_{m,n})$ be the class of all compact sets $F\subset\mathcal D_{m,n}$ such that  $F$ is $d$-rectifiable, $\H^d(F)<\infty$, and $F$ supports $mn$ currents $\Gamma_{rs}\in\I_d(X\times X';\FF), 1\le r\le m,1\le s\le n$ with
\be\label{eq:sp-prescribed-boundary}
\spt\Gamma_{rs}\subset F,\qquad
\partial\Gamma_{rs}=\partial(D_{e_r}\times D'_{e_s})
\quad(1\leq r\leq m,\ 1\leq s\leq n).
\ee
\end{defn}
This is the same type of spanning competitor class as in Section~\ref{sec:minimality}. 

\begin{lem}\label{lem:sp-fixed-projection}
Take any $F\in\mathscr C_{m,n}(\mathcal D_{m,n})$, and choose the $mn$ currents $\Gamma_{rs}$ as in its definition. Define, for nonzero $g,h$, 
\be\label{eq:sp-Gamma}
\Gamma_{g,h}=\sum_{r=1}^m\sum_{s=1}^n g_rh_s\Gamma_{rs}.
\ee
Then for every $g\in G_m^\times, h\in G_n^\times$, the projection current
\be\label{eq:sp-degreeone}
(\Pi_{g,h})_{\s}\Gamma_{g,h}=\chain{Q_g\times Q'_h}.
\ee
Moreover,
\be\label{eq:sp-projectionlower}
A_gA'_h\leq\int_{\car\Gamma_{g,h}}
J_d(\Pi_{g,h}|_{T_xF})\,d\H^d(x).
\ee
\end{lem}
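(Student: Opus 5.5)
The argument follows Lemma~\ref{lem:fixedprojection} and Corollary~\ref{cor:projectionlower}, with the one-factor input replaced by Lemma~\ref{lem:sp-projection}. First we compute the boundary of $\Gamma_{g,h}$. Since $\partial$ is $\FF$-linear, \eqref{eq:sp-prescribed-boundary} and \eqref{eq:sp-product-linear} give
\[
\partial\Gamma_{g,h}=\sum_{r,s}g_rh_s\,\partial(D_{e_r}\times D'_{e_s})=\partial(D_g\times D'_h).
\]
Next we identify the projection of the reference current. The pushforward of a product under a product map is the product of the pushforwards, so Lemma~\ref{lem:sp-projection}, applied in each factor, yields
\[
(\Pi_{g,h})_{\s}(D_g\times D'_h)=\bigl((\rho_g)_{\s}D_g\bigr)\times\bigl((\rho'_h)_{\s}D'_h\bigr)=\chain{Q_g}\times\chain{Q'_h}=\chain{Q_g\times Q'_h}.
\]

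Now set $Z=(\Pi_{g,h})_{\s}\Gamma_{g,h}+\chain{Q_g\times Q'_h}$. This is a compactly supported $\FF$-integral $d$-current lying in the $d$-plane $a_g^\perp\times{a'_h}^\perp$. Since pushforward commutes with $\partial$,
\[
\partial Z=(\Pi_{g,h})_{\s}\bigl(\partial\Gamma_{g,h}+\partial(D_g\times D'_h)\bigr)=0,
\]
because the two boundaries coincide and we work modulo two. We have $d=m+n-2\ge 2\ge1$, so the constancy Lemma~\ref{lem:topcycle} applies and gives $Z=0$. This is \eqref{eq:sp-degreeone}.

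For \eqref{eq:sp-projectionlower}, we apply Lemma~\ref{lem:pushmass} to $\Gamma_{g,h}$ with $p=\Pi_{g,h}$. The carrier of $\Gamma_{g,h}$ lies in $F$, so by the last clause of that lemma we may use $T_xF$ in the integrand. This gives
\[
\mass\bigl((\Pi_{g,h})_{\s}\Gamma_{g,h}\bigr)\le\int_{\car\Gamma_{g,h}}J_d(\Pi_{g,h}|_{T_xF})\,d\H^d(x).
\]
By \eqref{eq:sp-degreeone}, the left side equals $\H^d(Q_g\times Q'_h)=\H^{m-1}(Q_g)\H^{n-1}(Q'_h)=A_gA'_h$, where the last equality uses Lemma~\ref{lem:sp-projection} in each factor.

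I do not expect a real obstacle. The only points needing care are the following.
\begin{itemize}
\item The product-pushforward identity must be used at the level of $\FF$-currents, as in the tetrahedral case, and not only for supports.
\item The target of $\Pi_{g,h}$ must be a genuine $d$-plane. This holds because $a_g\ne0$ and $a'_h\ne0$ by \eqref{eq:sp-vector-length}.
\item The product $\chain{Q_g}\times\chain{Q'_h}$ must have multiplicity one, so that its mass is the full product volume. This follows from the multiplicity-one statement in Lemma~\ref{lem:sp-projection}.
\end{itemize}
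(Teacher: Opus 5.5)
Your proposal is correct and follows the paper's proof essentially step for step: the boundary identity $\partial\Gamma_{g,h}=\partial(D_g\times D'_h)$, the product-pushforward computation via Lemma~\ref{lem:sp-projection}, the constancy Lemma~\ref{lem:topcycle} applied to $Z$ in the $d$-plane $a_g^\perp\times{a'_h}^\perp$, and then Lemma~\ref{lem:pushmass} for the lower bound. The points you flag are ones the paper uses only implicitly: $a_g,a'_h\neq0$, and multiplicity one of $\chain{Q_g}\times\chain{Q'_h}$.
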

\nd
By \eqref{eq:sp-prescribed-boundary} and \eqref{eq:sp-product-linear}, we have
\be
\partial\Gamma_{g,h}=\partial(D_g\times D'_h).
\ee
Meanwhile Lemma~\ref{lem:sp-projection} gives
\be
(\Pi_{g,h})_{\s}(D_g\times D'_h)
=((\rho_g)_{\s}D_g)\times((\rho'_h)_{\s}D'_h)
=\chain{Q_g\times Q'_h}.
\ee
Thus
\be
Z=(\Pi_{g,h})_{\s}\Gamma_{g,h}+\chain{Q_g\times Q'_h}
\ee
is a compactly supported integral $d$-current modulo two in $a_g^\perp\times {a'_h}^\perp$, and
\be
\partial Z
=(\Pi_{g,h})_{\s}\bigl(\partial\Gamma_{g,h}
+\partial(D_g\times D'_h)\bigr)=0.
\ee
The target plane $a_g^\perp\times {a'_h}^\perp$ also has dimension $d$, so the constancy Lemma~\ref{lem:topcycle} gives $Z=0$. This proves \eqref{eq:sp-degreeone}.

Now we apply Lemma~\ref{lem:pushmass} to \eqref{eq:sp-degreeone}, and get
\be\begin{split} \int_{\car\Gamma_{g,h}}J_d(\Pi_{g,h}|_{T_xF})\,d\H^d(x)&\ge \mass((\Pi_{g,h})_{\s}\Gamma_{g,h})=\H^d(Q_g\times Q'_h)\\
&=\H^{m-1}(Q_g)\H^{n-1}(Q'_h)=A_gA'_h.
\end{split}\ee
This gives \eqref{eq:sp-projectionlower}.
\qed

The remaining step is to sum the projection inequalities with the correct weights.

\begin{thm}[Minimality in the spanning competitor class]\label{thm:sp-boundary-minimality}
For every $F\in\mathscr C_{m,n}(\mathcal D_{m,n})$,
\be\label{eq:sp-areabound}
\H^d(F)\geq M_mM_n
=\H^d(E^0_{m,n}).
\ee
\end{thm}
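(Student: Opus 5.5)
The argument follows the proof of Theorem~\ref{thm:currentminimal} line by line; only the constants change. Fix $F\in\mathscr C_{m,n}(\mathcal D_{m,n})$ and the $mn$ currents $\Gamma_{rs}$. Let $C(x)$ be the multiplicity matrix \eqref{eq:sp-C} on the common carrier $F$ given by Lemma~\ref{lem:common}. Choose a measurable unit simple normal $2$-vector field $\eta_x\perp T_xF$, defined $\H^d$-a.e.

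Multiply \eqref{eq:sp-projectionlower} of Lemma~\ref{lem:sp-fixed-projection} by the weight $A_gA'_h$ and sum over $g\in G_m^\times$, $h\in G_n^\times$. By \eqref{eq:sp-parity}, $x\in\car\Gamma_{g,h}$ if and only if $g^TC(x)h=1$. By Lemma~\ref{lem:sp-mixed}$\,2^\circ$, each weighted Jacobian equals $|\pair{\eta_x}{a_g\wedge a'_h}|$. Hence, using Theorem~\ref{thm:sp-pointwise} in the last step,
\[
\Big(\sum_{g}A_g^2\Big)\Big(\sum_h {A'_h}^2\Big)\le\int_F\Phi_{m,n}(C(x),\eta_x)\,d\H^d(x)\le \Lambda_{m,n}\H^d(F).
\]
Lemma~\ref{lem:sp-frame} identifies the left side as $m\a_m\cdot n\a_n$. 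Therefore
\[
\H^d(F)\ge \frac{mn\,\a_m\a_n}{2^{(m+n-2)/2}\sqrt{\a_m\a_n}}=\frac{m\sqrt{\a_m}}{2^{(m-1)/2}}\cdot\frac{n\sqrt{\a_n}}{2^{(n-1)/2}}=M_mM_n,
\]
where the last equality is \eqref{eq:sp-model-volume}. The only points needing care are the measurability of $\eta_x$ and the fact that all the carriers share the tangent plane $T_xF$ off a common null set. Both are supplied by Lemma~\ref{lem:common}, since there are finitely many currents.

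For the equality $\H^d(E^0_{m,n})=M_mM_n$, note that $E^0_{m,n}=(S_m\cap\Delta^m)\times(S_n\cap\Delta^n)$. It is the union of the product faces $K_{ij}\times K'_{ab}$. By \eqref{eq:sp-sectors}, these have pairwise disjoint relative interiors and intersections of dimension at most $d-1$. Each face has measure $\H^{m-1}(K_{ij})\H^{n-1}(K'_{ab})$, because the factors lie in orthogonal subspaces. Summing gives $M_mM_n$.

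Finally, $E^0_{m,n}\in\mathscr C_{m,n}(\mathcal D_{m,n})$, as witnessed by the currents $D_{e_r}\times D'_{e_s}$. Their supports lie in $E^0_{m,n}$ by Lemma~\ref{lem:sp-current}, and their boundaries are the prescribed ones, so the bound is attained.

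There is no real obstacle here, since the heavy lifting was done in Theorem~\ref{thm:sp-pointwise} and Lemma~\ref{lem:sp-frame}. The only step worth checking carefully is the arithmetic identity $mn\sqrt{\a_m\a_n}\,2^{-(m+n-2)/2}=M_mM_n$, which I have verified above via \eqref{eq:sp-model-volume}.
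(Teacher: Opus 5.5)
Your proposal is correct and follows the paper's proof essentially verbatim. You weight \eqref{eq:sp-projectionlower} by $A_gA'_h$, convert the weighted Jacobian sum into $\Phi_{m,n}(C(x),\eta_x)$ via the carrier characterization and Lemma~\ref{lem:sp-mixed}, and bound it by $\Lambda_{m,n}$. You then identify the left side as $mn\,\a_m\a_n$, compute $\H^d(E^0_{m,n})$ face by face, and use $D_{e_r}\times D'_{e_s}$ as witnesses. One minor correction: measurability of $\eta_x$ comes from measurability of the approximate tangent plane map $x\mapsto T_xF$, not from Lemma~\ref{lem:common}, and this does not affect the argument.
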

\nd Fix any $F\in\mathscr C_{m,n}(\mathcal D_{m,n})$, choose the $mn$ currents $\Gamma_{rs}$ as in the definition of $\mathscr C_{m,n}(\mathcal D_{m,n})$. Let $C(x)$ be the multiplicity matrix for $\Gamma_{rs}, 1\le r\le m, 1\le s\le n$ on the common carrier $F$, after Lemma~\ref{lem:common}. Then outside one common null set,
\be\label{eq:sp-activecarrier}
x\in\car\Gamma_{g,h}\quad\Longleftrightarrow\quad g^TC(x)h=1,
\ee
where $\Gamma_{g,h}$ is as defined in \eqref{eq:sp-Gamma}. Let $\eta_x$ be a measurable unit simple 2-vector field on $F$, so that $\eta_x\perp T_xF$ almost everywhere.

Multiply \eqref{eq:sp-projectionlower} by $A_gA'_h>0$ and sum over all nonzero $g,h$. Then by equations \eqref{eq:sp-activecarrier}, \eqref{eq:sp-jacobian}, and Theorem \ref{thm:sp-pointwise}, we have
\be\label{eq:sp-comparisonchain}\begin{split}
&\left(\sum_{g\in G_m^\times}A_g^2\right)
\left(\sum_{h\in G_n^\times}{A'_h}^2\right)=\sum_{g\in G_m^\times,h\in G_n^\times}A_g^2{A'_h}^2\\
&\leq\sum_{g,h}A_gA'_h\int_F\mathbf1_{\{g^TC(x)h=1\}}
 J_d(\Pi_{g,h}|_{T_xF})\,d\H^d(x)\\
&=\int_F\Phi_{m,n}(C(x),\eta_x)\,d\H^d(x)\leq\Lambda_{m,n}\H^d(F).
\end{split}\ee
By \eqref{eq:sp-frame}, the left-hand side is $mn\a_m\a_n$. Hence by Lemma \ref{lem:sp-model-volume},
\be\label{eq:sp-numerical-lower}
\H^d(F)\geq
\frac{mn\a_m\a_n}{\Lambda_{m,n}}
=\frac{mn\sqrt{\a_m\a_n}}{2^{(m+n-2)/2}}
=M_mM_n.
\ee

It remains to verify that the right-hand side is the measure of the reference set $E^0_{m,n}$. The set $E^0_{m,n}$ is the union of all $K_{ij}\times K'_{ab}$, with disjoint relative interiors; the intersection of two distinct product faces has dimension at most $d-1$. Thus 
\begin{align*}
\H^d(E^0_{m,n})&=\sum_{i<j,a<b}\H^{m+n-2}(K_{ij}\times K'_{ab})
=\sum_{i<j,\ a<b}\H^{m-1}(K_{ij})\H^{n-1}(K'_{ab})\\
&=\left(\sum_{i<j}\H^{m-1}(K_{ij})\right)
  \left(\sum_{a<b}\H^{n-1}(K'_{ab})\right)
=M_mM_n.
\end{align*}
The currents $D_{e_r}\times D'_{e_s}$ show that
$E^0_{m,n}\in\mathscr C_{m,n}(\mathcal D_{m,n})$, so the lower bound is attained.
\qed

\subsection{Almgren minimality of $E_{m,n}=S_m\times S_n$}\label{subsec:simplex-almgren}

We now come to the proof of the Almgren minimality of $E_{m,n}=S_m\times S_n$. It remains only
to check that every Almgren deformation produces a competitor in the above spanning class.

\noindent \textbf{Proof of Theorems~\ref{thm:simplex-main} and~\ref{main:simplex}. }

Take all the notations as before. Since $E_{m,n}=S_m\times S_n$ is a cone, it is enough to prove its Almgren minimality in $\mathring{\mathcal D_{m,n}}$.
Let $\{\varphi_t\}_{0\leq t\leq1}$ be any Almgren deformation in $\mathring{\mathcal D_{m,n}}$, write $f=\varphi_1$.
Set
\be\label{eq:sp-deformedset}
F:=f(E_{m,n})\cap\mathcal D_{m,n}
=f(E_{m,n}\cap\mathcal D_{m,n})=f(E^0_{m,n}).
\ee
This is a finite union of Lipschitz images of compact polyhedra, hence is compact and $d$-rectifiable, with $\H^d(F)<\infty$.

Define
\be\label{eq:sp-deformed-currents}
\Gamma_{rs}=f_{\s}(D_{e_r}\times D'_{e_s}),
1\leq r\leq m,\ 1\leq s\leq n.
\ee
These $\FF$ integral currents are supported in $F$. Lemma~\ref{lem:sp-current} and the product boundary formula give
\begin{align}\label{eq:sp-product-boundary}
\partial(D_g\times D'_h)
&=(\partial D_g)\times D'_h+D_g\times\partial D'_h,\notag\\
\spt\partial(D_g\times D'_h)
&\subset(\partial\Delta^m\times\Delta^n)
\cup(\Delta^m\times\partial\Delta^n)
=\partial\mathcal D_{m,n}.
\end{align}
The map $f$ is the identity near these boundaries, so
\be
\partial\Gamma_{rs}
=f_{\s}\partial(D_{e_r}\times D'_{e_s})
=\partial(D_{e_r}\times D'_{e_s}).
\ee
Thus all conditions in Definition~\ref{def:sp-currentclass} are verified, and therefore
$F\in\mathscr C_{m,n}(\mathcal D_{m,n})$. 

We apply Theorem~\ref{thm:sp-boundary-minimality} and \eqref{eq:sp-deformedset}, and get
\be
\H^d(f(E_{m,n})\cap\mathcal D_{m,n})
\geq M_mM_n
=\H^d(E_{m,n}\cap\mathcal D_{m,n}).
\ee
Subtract the common term
$\H^d(E_{m,n}\cap f(E_{m,n})\cap\mathcal D_{m,n})$ from both sides, and we get
\be\label{eq:sp-final-Almgren}
\H^d(E_{m,n}\bs f(E_{m,n}))
\leq\H^d(f(E_{m,n})\bs E_{m,n}).
\ee
This gives the Almgren minimality of $E_{m,n}=S_m\times S_n$.\qed

\subsection{The difficulty with the product of three or more factors}\label{subsec:simplex-scope}

It is natural to ask whether the same argument works for three or more factors. The current-theoretic part causes no immediate difficulty: the $\FF$ cycle families, the multilinear $\FF$ multiplicities, and the fixed-boundary projection identities can all be constructed in the same way. The main obstruction appears in the pointwise estimate Theorem~\ref{thm:sp-pointwise}, which is the core estimate that allows arbitrary folding and overlap.

\bigskip

\textbf{For example}, for the product of three orthogonal copies of 1-dimensional $\Y$ set, $\Y\times \Y\times \Y$, let $X_1,X_2,X_3$ be three mutually orthogonal copies of $\R^2$, and for $j=1,2,3$, choose unit vectors $u_j,v_j$ with $u_j\cdot v_j=-1/2$, and set $a_g^{(j)}=\sqrt3(g_1u_j+g_2v_j)$ (with $q_2=\sqrt 3$) for $g\in\FF^2\bs\{0\}$ and $\eta=u_1\wedge u_2\wedge u_3$. Define the trilinear form, with addition in $\FF$,
\be
 C(g,h,\ell)=g_1h_1\ell_1+g_1h_2\ell_2+g_2h_1\ell_2
             +g_2h_2\ell_1+g_2h_2\ell_2.
\ee
For each factor $j=1,2,3$, in the order $g=10,01,11$, we have
\be
(|a_g^{(j)}\cdot u_j|)_{10,01,11}
=\sqrt3\left(1,\frac12,\frac12\right).
\ee
Note that 
\be\left|
\left\langle
\eta,
a_g^{(1)}\wedge a_h^{(2)}\wedge a_\ell^{(3)}
\right\rangle
\right|=|a_g^{(1)}\cdot u_1||a_h^{(2)}\cdot u_2||a_\ell^{(3)}\cdot u_3|
=
3\sqrt3\,\rho(g)\rho(h)\rho(\ell),\ee
where $\rho(10)=1,
\rho(01)=\rho(11)=\frac12$.
For fixed $g=10,01,11$, summing over all $(h,\ell)$ such that
$C(g,h,\ell)=1$ gives respectively $\frac{11}{4},\frac54,\frac{11}{8}.
$
 Hence
\be\label{eq:sp-three-factor-obstruction}
\begin{split}\sum_{\substack{g,h,\ell\in\FF^2\bs\{0\}\\C(g,h,\ell)=1}}
\left|\pair{\eta}{a_g^{(1)}\wedge a_h^{(2)}\wedge a_\ell^{(3)}}\right|
=\sum_{g\in \FF^2\bs\{0\}}\sum_{\substack{h,\ell\in\FF^2\bs\{0\}\\C(g,h,\ell)=1}}
\left|\pair{\eta}{a_g^{(1)}\wedge a_h^{(2)}\wedge a_\ell^{(3)}}\right|
\\
=3\sqrt 3(\frac{11}{4}+\frac{5}{4}+\frac{11}{8})=\frac{129\sqrt3}{8}>27
=\bigl(\sqrt2\sqrt{\a_2}\bigr)^3.
\end{split}\ee
Here $\a_2=9/2$, and the bound $27$ on the right is exactly what the weighted projection sum would need to recover the reference measure $\H^3(\Y^3\cap(\Delta^2)^3)=27$. 

Here in the example, $\eta=u_1\wedge u_2\wedge u_3$ is already a product normal 3-vector. Moreover, for the multiplicity trilinear form $C$, after fixing any two of the three indices, exactly two of the three possible values of the remaining index satisfy $C(g,h,\ell)=1$. Thus every coordinate fiber of the incidence tensor has exactly two active entries. Nevertheless, the weighted trilinear sum exceeds the required bound $27$.

One final remark is, \emph{the calculation above disproves only the direct extension of the present calibration estimate, not Almgren minimality of the multi-factor product.} 

\renewcommand\refname{References}
\begingroup
\small
\linespread{0.95}\selectfont
\bibliographystyle{plain}
\bibliography{reference}
\endgroup
\end{document}